\documentclass[letterpaper, 11pt]{article}
\usepackage{amssymb}
\usepackage{amsmath}
\usepackage{amsfonts}
\usepackage{amsthm}
\usepackage[usenames]{color}
\usepackage{graphicx}
\usepackage{dsfont}
\graphicspath{{./Images/}}
\usepackage{verbatim}
\usepackage{mathrsfs}
\usepackage{authblk}
\usepackage{caption,subcaption} 
\usepackage{mathrsfs,enumerate,bbm,latexsym,graphicx}
\usepackage{hyperref,dsfont}
\usepackage[export]{adjustbox}

\setlength{\textwidth}{16cm}
\setlength{\textheight}{21cm}
\setlength{\oddsidemargin}{0cm}
\setlength{\evensidemargin}{0cm}
\setlength{\topmargin}{0cm}
\setlength{\parskip}{.4\baselineskip}
\setlength{\topsep}{.4\baselineskip}

\widowpenalty=10000\clubpenalty=10000

\newtheorem{theorem}{Theorem}[section]
\newtheorem{lemma}[theorem]{Lemma}
\newtheorem{prop}[theorem]{Proposition}
\newtheorem{remark}[theorem]{Remark}

\theoremstyle{definition}
\newtheorem{definition}{Definition}

\newcommand{\Zn}{\boldZ_n}
\newcommand{\lam}{\lambda}
\newcommand{\al}{\alpha}
\newcommand{\boldZ}{\mathbb{Z}}
\newcommand{\boldE}{\mathbb{E}}
\newcommand{\boldV}{\mathbb{V}}
\newcommand{\bR}{\mathbb{R}}
\newcommand{\bZ}{\mathbb{Z}}
\newcommand{\xt}{x_t}
\newcommand{\et}{e_t}

\newcommand{\xti}{x_{t}(i)}
\newcommand{\etij}{e_{t}(i,j)}

\newcommand{\lampl}{\lambda^{+}_{\alpha}}
\newcommand{\lammin}{\lambda^{-}_{\alpha}}
\newcommand{\s}{\hspace{2mm}}

\newcommand{\bt}{\bullet}
\newcommand{\br}{\blacktriangleright}
\newcommand{\bl}{\blacktriangleleft}
\newcommand{\Aiil}{A_{2,L}}
\newcommand{\Aiir}{A_{2,R}}
\newcommand{\Aiio}{A_{2,O}}
\newcommand{\Aiis}{A_{2,*}}
\newcommand{\Aiiil}{A_{3,L}}
\newcommand{\Aiiir}{A_{3,R}}
\newcommand{\Aiiio}{A_{3,O}}
\newcommand{\Aiiis}{A_{3,*}}
\newcommand{\Aiv}{A_4}
\newcommand{\Biii}{B_3}

\def\E{\mathbb{E}\,}

\newcommand{\sle}{\preceq}
\newcommand{\sge}{\succeq}

\newcommand{\cX}{\mathcal{X}}
\newcommand{\prob}[1]{\mathbb{P}\left(#1\right)}
\newcommand{\probxo}[1]{\mathbb{P}^{\mathcal{X}_0}\left(#1\right)}
\newcommand{\Lra}{\Leftrightarrow}

\newcommand{\vA}{\mathbf{A}}
\newcommand{\vV}{\mathbf{V}}
\newcommand{\vW}{\mathbf{W}}
\newcommand{\vB}{\mathbf{B}}
\newcommand{\vD}{\mathbf{D}}
\newcommand{\eps}{\epsilon}

\newcommand{\sfA}{\mathsf{A}}
\newcommand{\sfB}{\mathsf{B}}

\newcommand{\sfD}{\mathsf{D}}

\newcommand{\sfG}{\mathsf{G}}
\newcommand{\sfU}{\mathsf{U}}

\def\E{\mathbb{E}\,}

\def\beq{ \begin{equation} }
 \def\eeq{ \end{equation} }
 \def\beqx{ \begin{equation*} }
 \def\eeqx{ \end{equation*} }
 \def\beqa{\begin{eqnarray}}
 \def\eeqa{\end{eqnarray}}
 \def\beqax{\begin{eqnarray*}}
 \def\eeqax{\end{eqnarray*}}

\let\ga=\alpha \let\gb=\beta \let\gc=\gamma  \let\gee=\epsilon
\let\gf=\varphi \let\gh=\eta    \let\gl=\lambda 
     \let\gt=\tau \let\gth=\vartheta
\let\gx=\chi

\title{The effect of avoiding known infected neighbors on the persistence of a recurring infection process}
\author[$\star$]{Shirshendu Chatterjee\footnote{SC was supported by the NSF grant DMS-1812148.}}
\author[$\circ$$\ddag$]{David Sivakoff\footnote{DS and MW were supported by the NSF TRIPODS grant CCF-1740761.}}
\author[$\dagger$$\circ$]{Matthew Wascher}

\affil[$\star$]{Department of Mathematics, City University of New York, City College \& Graduate Center, New York, NY 10031}
\affil[$\circ$]{Department of Statistics, The Ohio State University, Columbus, OH 43210}
\affil[$\ddag$]{Department of Mathematics, The Ohio State University, Columbus, OH 43210}
\date{}

\begin{document}
\maketitle
\begin{abstract}
We study a generalization of the classical contact process (SIS epidemic model) in a directed graph $G$. Our model is a continuous-time interacting particle system in which at every time, each vertex is either healthy or infected, and each oriented edge is either active or inactive. Infected vertices become healthy at rate $1$, and pass the infection along each active outgoing edge at rate $\lambda$. At rate $\alpha$, healthy individuals deactivate each incoming edge from their infected neighbors. We study the persistence time of this epidemic model on the lattice $\bZ$, the $n$-cycle $\Zn$, and the $n$-star graph. We show that on $\bZ$, for every $\alpha>0$, there is a phase transition in $\lambda$ between almost sure extinction and positive probability of indefinite survival; on $\Zn$ we show that there is a phase transition between poly-logarithmic and exponential survival time as the size of the graph increases. On the star graph, we show that the survival time is $n^{\Delta+o(1)}$ for an explicit function $\Delta(\alpha,\lambda)$ whenever $\alpha>0$ and $\lambda>0$. In the cases of $\bZ$ and $\Zn$, our results qualitatively match what has been shown for the classical contact process, while in the case of the star graph, the classical contact process exhibits exponential survival for all $\lambda > 0$, which is qualitatively different from our result. This model presents a challenge because, unlike the classical contact process, it has not been shown to be monotonic in the infection parameter $\lambda$ or the initial infected set.
\end{abstract}

\section{Introduction}
The contact process is a stochastic model for an epidemic process on a graph, $G$, which has received a lot of recent attention~\cite{spread-viruses-internet, VHCan, CS2015,Chatterjee2009,HD2018,Mountfordexp,Mountford2013,S2017}. The classical {\em contact process} has a single parameter, $\lambda$, which controls the infection rate {across each edge of $G$}. At any time, the vertices of $G$ can be either infected or healthy. Each healthy vertex becomes infected at rate $\lambda$ times the number of infected neighbors that it has, while infected vertices become healthy at rate 1. Much is known about this model, especially when $G = \bZ^d$, and when $G$ is a finite random graph (see below for more background). In this paper, we study the {\em contact process with avoidance}, in which, in addition to the classical dynamics, each healthy individual attempts to temporarily deactivate {each of the edges that it shares with its infected neighbors}
 at rate $\alpha$. A deactivated edge becomes active again when the infected neighbor becomes healthy. This avoidance behavior is intended to model the tendency of healthy individuals to try and avoid visibly infected individuals in a population.
 
The main ingredients in many proofs about the classical contact process, and many of its variants that have been studied, are duality and additivity. For rigorous definitions and proofs in the case of the classical contact process, see \cite{Liggett1999}. Informally, duality refers to the existence of a time-reversal process that is Markov; the classical contact process is self-dual. Additivity says that if $x_t^A$ is the infected set of vertices at time $t$ with initially infected set $A$, then for every $A,B\subseteq V$, there exists a coupling between $x_t^A, x_t^B$ and $x_t^{A\cup B}$ such that $x_t^{A \cup B}=x_t^A \cup x_t^B$ for all $t\ge 0$. The contact process with avoidance is not known to possess these properties (we suspect it does not), and this is a notable technical challenge in deriving rigorous results about this process.

Our main results indicate that this model exhibits a phase transition similar to the classical contact process on $\bZ$ and on the cycle $\bZ_n := \bZ/n\bZ$, but with a critical infection parameter that grows linearly in $\alpha$. However, it exhibits drastically different behavior on the star graph with $n$ leaves, where the classical contact process survives exponentially long for any $\lambda>0$, while the contact process with avoidance survives only polynomially long (for every $\lambda, \alpha>0$; the case $\alpha=0$ corresponds to the classical contact process). We note that rigorous results for interacting particle systems that coevolve with the underlying topology, such as the CPA, are still scarce in the literature. We discuss notable examples in Section 1.3.

\subsection{Main results}
Let $G = (\boldV,\boldE)$ be a graph with {vertex set}
$\boldV$ and directed {edge set}
$\boldE$. {Now we formally} define the \textit{contact process with avoidance (CPA)} 
 {$(\cX_t)_{t\ge 0}$ {on the graph $G$}, where $\cX_t = (x_t, e_t)$ takes values in $\{0,1\}^{\boldV}\times\{0,1\}^{\boldE}$}. 
 The state of vertex $i \in \boldV$ at time $t$ is given by $x_t(i) \in \{0,1\}$, where 0 indicates that $i$ is susceptible (healthy) and 1 indicates that $i$ is infected at time $t$. The state of the directed edge $(i,j)\in \boldE$ at time $t$ is given by $\etij \in \{0,1\}$, where $0$ indicates that $(i,j)$ is inactive (blocked) and $1$ indicates that $(i,j)$ is active (open) at time $t$. Given the parameters $\lambda,\alpha\ge0$ governing {the per edge infection and deactivation rates}, 
 the process $(\cX_t)_{t\ge0}$ evolves according to the following {update} rules.
\begin{enumerate}
\item For each $i\in \boldV$, $\xti$ goes from $0 \rightarrow 1$ at rate $\lam \sum_{j\in \boldV} x_t(j)e_t(j,i)\mathds{1}\{(j,i)\in \boldE\}$. 
\item For each $i\in \boldV$, $\xti$ goes from $1 \rightarrow 0$ at rate 1.
\item For each $(i,j)\in \boldE$, $\et(i,j)$ goes from $1 \rightarrow 0$ at rate $\al$ if $x_t(j) = 0$ and $x_t(i) = 1$, and at rate $0$ otherwise. 
\item For each $(i,j)\in \boldE$, $\et(i,j)$ goes from $0 \rightarrow 1$ when $x_t(i)=0$.
\end{enumerate}
{We denote the law of the process $(\cX_t)$ starting with initial condition $\cX_0$ 
by $\mathbb P^{\cX_0}$. For brevity, we abuse notation and identify $x_t\in\{0,1\}^\boldV$ with $\{i: x_t(i)=1\}\subset \boldV$.  }

Consider the {one dimensional} lattice $G = (\boldV, \boldE)$ where $\boldV = \boldZ$ and $\boldE = \{(i,j): |i-j| = 1\}$, and let $\mathscr{X}$ be the collection of initial conditions $\cX_0=(x_0, e_0)$ for which $|x_0| < \infty$ and there is at least one $i \in \bZ$ such that $x_0(i) = 1$ and $e_0(i,i-1)+e_0(i,i+1) > 0$.  For each $\alpha >0$, define the lower and upper critical values for $\lambda$ by
\begin{equation}
\begin{aligned}
\lambda_\alpha^- &:= \inf\{\lambda : \probxo{|\xt| \ge 1 \s \forall t > 0} >0 \textrm{ for some } \cX_0 \in \mathscr{X}\}, \\
\lampl &:= \sup\{\lambda : \probxo{x_t(0) = 1 \textrm{ i.o.}} =0\ \forall \cX_0 \in \mathscr{X} \},
\end{aligned}
\end{equation}
where $\{x_t(0) = 1 \textrm{ i.o.}\}$ is the event that $x_t(0)=1$ for an unbounded collection of times $t$.
We define here the upper critical value in terms of the origin becoming infected at arbitrarily large times. We can define another upper critical value, $\lambda_{\alpha,w}^+$, in terms of a ``weak'' notion of survival of the infection, that is, the existence of infected vertices for all $t$:
\begin{equation}
\begin{aligned}
\lambda_{\alpha,w}^+ &:= \sup\{\lambda : \probxo{|\xt| \ge 1 \s \forall t > 0} =0\ \forall \cX_0 \in \mathscr{X} \}.
\end{aligned}
\end{equation}

Note that $\lambda_{\alpha,w}^+\le \lampl$, since $\{|\xt| > 0 \s \forall t > 0\}\supseteq \{x_t(0)=1\ \mathrm{i.o.}\}$. When $\{|\xt| > 0 \s \forall t > 0\}$ occurs, we say the process \textit{survives weakly}, when $\{x_t(0)=1\ \mathrm{i.o.}\}$ occurs, we say the process \textit{survives strongly}, and when $\{|\xt| = 0 \text{ for some } t>0\}$ occurs, we say the process \textit{dies out}. The classical contact process on $\boldZ$ either dies out or survives strongly~\cite{Liggett1999}. However, \cite{pemantle1992} showed that on trees, the contact process may die out, survive weakly but not strongly or survive strongly, depending on $\lambda$ and the tree structure. A natural open question is whether  $\lammin = \lambda_{\alpha,w}^+ = \lampl$, so that there is a single critical value $\lam_{\al}$ separating extinction and strong survival for the contact process with avoidance on $\mathbb{Z}^d$.  In the case of the classical contact process, this question is answered using duality and additivity,  which the contact process with avoidance may not possess.

We now state our main results.

\begin{theorem}\label{lattice-thm}
Let $G = (\boldV, \boldE)$ where $\boldV = \boldZ$ and $\boldE = \{(i,j): |i-j| = 1\}$ and fix $\al > 0$. Then there exist constants $a_1,a_2>0$, not depending on $\al$, such that  $1 + \al \leq \lammin \leq \lampl \leq a_1 + a_2\al$.
\end{theorem}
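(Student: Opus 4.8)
The middle inequality $\lammin \le \lampl$ needs no monotonicity and follows from the definitions alone: let $S=\{\lambda\ge 0:\probxo{|\xt|>0\ \forall t>0}>0\text{ for some }\cX_0\in\mathscr{X}\}$, so that $\lammin=\inf S$ and, since $\{\lambda:\probxo{|\xt|>0\ \forall t>0}=0\ \forall\,\cX_0\in\mathscr{X}\}$ is exactly the complement $S^{c}$, also $\lampl=\sup S^{c}$; were $\lammin>\lampl$, any $\lambda$ strictly between them would lie in neither $S$ nor $S^{c}$. It remains to prove the two outer bounds, and the real content is that, lacking monotonicity in $\lambda$ or in $\cX_0$, we cannot pass to extremal initial data or bootstrap from a single critical value, so extinction for $\lambda<1+\al$ and survival for $\lambda\ge a_1+a_2\al$ must each be established by a direct stochastic comparison valid on the entire relevant range of $\lambda$.

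For the lower bound, first observe that $\{|\xt|>0\ \forall t\}\subseteq\{\sup_t r_t=\infty\}\cup\{\inf_t \ell_t=-\infty\}$, where $r_t,\ell_t$ denote the rightmost and leftmost infected sites: if both are finite the infection is confined for all time to a random finite interval, on which the CPA is a finite-state continuous-time Markov chain whose all-healthy (all-edges-active) configuration is absorbing and reachable, hence hit almost surely in finite time. By the reflection symmetry of $\bZ$ it therefore suffices to show $\probxo{\sup_t r_t=\infty}=0$ when $\lambda<1+\al$. I would do this by attaching to the infection a genealogy of infection events and dominating it by a subcritical branching process (equivalently, a subcritical branching random walk): the crucial ``race'' is that when a site $i$ first becomes infected the edge $(i,i+1)$ is active, and before $i$ can transmit across it (rate $\lambda$) either $i$ recovers (rate $1$) or the edge deactivates (rate $\al$), after which the edge cannot be used again until $i$ recovers, by which point $i$ is healthy; carried through, the branching parameter of the genealogy crosses $1$ precisely at $\lambda=1+\al$, and subcriticality gives finiteness of the genealogy, hence that only finitely many sites are ever infected. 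The delicate point—and, in my view, the main obstacle in the whole theorem—is that the deactivation rule is both non-local in time (a blocked edge stays blocked until its tail recovers) and adaptive (it runs only while the downstream vertex is healthy); consequently the easy comparisons are not genuine stochastic dominations, and a first-moment bound on $\E|\xt|$ (tracking the number of active out-edges of infected sites) only yields the weaker threshold $\lambda<(1+\al)/2$. One must instead set up the genealogy so that each crossing of an edge is charged at most once per source-lifetime, and then, using the strong Markov property at the successive infection times, exhibit a stochastic domination of the offspring counts, conditional on the past, by an explicit law of mean at most $\lambda/(1+\al)$.

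For the upper bound I would run a block construction comparing the CPA to supercritical oriented percolation, in the spirit of Bezuidenhout--Grimmett (cf.\ the treatments in Durrett's or Liggett's monographs). Fix space and time scales $L$ and $T$, with $T$ of order $L/\lambda$ so that in one block the infection cannot escape a window of bounded size (keeping the comparison finite-range dependent), and declare a space--time block good if, starting from an ``occupied'' state (enough infected sites with active out-edges) in a sub-interval of length $\Theta(L)$, the CPA produces occupied states in each of the two neighbouring sub-intervals at the end of the block, this event being measurable with respect to the graphical data in a bounded space--time region. The key estimate is that $\mathbb{P}(\text{good})\to1$ as $\lambda/(1+\al)\to\infty$: across a single edge the infection wins its race against deactivation and recovery with probability at least $\lambda/(\lambda+1+\al)$, a Chernoff bound controls the number of failed crossings along a path of length $\Theta(L)$, and after each failure the source recovers, the edge reactivates, and a reinfection from behind lets the crossing be retried after an $O(1)$ delay—so the front advances at positive speed and the occupied interval is not depleted over $[0,T]$, all with probability tending to $1$. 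Choosing absolute constants $a_1,a_2$ large enough then forces $\mathbb{P}(\text{good})$ above the threshold of the oriented-percolation comparison theorem whenever $\lambda\ge a_1+a_2\al$ (note $\lambda/(1+\al)\ge\min(a_1,a_2)$ in that range), yielding survival with positive probability from some $\cX_0\in\mathscr{X}$, hence—by the footnoted fact that survival from one element of $\mathscr{X}$ forces survival from all of them—from every $\cX_0\in\mathscr{X}$; thus $\lampl\le a_1+a_2\al$.
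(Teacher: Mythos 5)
Your handling of the middle inequality (complementarity of the two defining sets) is fine, and your overall architecture --- a direct extinction argument for $\lam<1+\al$ and a block/oriented-percolation comparison for large $\lam$ --- has the right shape. But both outer bounds contain genuine gaps. For the lower bound, the branching/genealogy domination is exactly the step you do not supply, and it is not routine: a source can re-infect the same neighbour several times during one lifetime (the neighbour recovers while the edge is still active), so ``charging each crossing at most once per source-lifetime'' forces you to merge the progeny of all re-infections of a site into a single branching individual, and it is far from clear that the resulting object is dominated by a Galton--Watson tree with offspring mean $2\lam/(\lam+1+\al)$; moreover the avoidance clock on $(i,j)$ runs only while $j$ is healthy, so after $j$'s first infection the ``race'' is no longer a competition of three fresh independent exponentials. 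You correctly name these obstacles and then assert, rather than construct, the domination. The paper sidesteps all of this by exploiting the one-dimensional geometry: it tracks only the leftmost and rightmost infected sites $L_s,R_s$ and notes that the leftmost site moves left only if the rate-$\lam$ infection across its outward edge beats both the rate-$1$ recovery of the source and the rate-$\al$ deactivation of that edge --- a clean race among independent Harris clocks, because the outward neighbour of $L_s$ is healthy by definition --- so $\prob{L_{s+1}<L_s}\le\lam/(\lam+1+\al)<1/2$ when $\lam<1+\al$, and $L_s,R_s$ are dominated by oppositely biased random walks that must cross. That argument gives the threshold $1+\al$ with no genealogical bookkeeping, and you should use it.

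For the upper bound, your block construction is the paper's strategy in outline, but you have not engaged with the one difficulty that the lack of monotonicity actually creates there. For a non-attractive process, declaring a block ``good'' as an event measurable with respect to the graphical data in a bounded space--time region is not sufficient: whether the block propagates the infection depends on the vertex and edge configuration it inherits on its bottom and lateral boundaries, and that configuration is determined by Poisson events outside the block, so goodness of neighbouring blocks is neither independent nor a function of disjoint graphical data. The paper's Section 3 exists entirely to resolve this: it fixes a family $\Aiis$ of admissible entry configurations and proves that, \emph{uniformly over which configuration in $\Aiis$ the block starts from and over whatever happens on its external space--time boundary}, the block reaches the fully infected, fully active state $\Aiv$ by the end of its time window with probability at least $p$, once $\lam$ exceeds an explicit linear function of $\al$; only this uniform bound licenses domination of an independent oriented site percolation and yields $\lampl\le a_1+a_2\al$. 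Your sketch needs that uniformity statement, together with the quantitative single-block lemmas behind it, before the percolation comparison goes through.
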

Theorem~\ref{lattice-thm} says there is a phase transition {(in $\lambda$)} between almost sure extinction and positive probability of strong survival of the infection on $\boldZ$. Moreover, both the upper and lower critical values are linear in $\al$. {Next we focus on the CPA on the $n$-cycle $\mathbb Z_n$.}

\begin{theorem}\label{ring-thm}
Let $G = (\boldV, \boldE)$, where $\boldV = \boldZ_n$ and $\boldE = \{(i,j): |i-j| = 1{\text{ mod } n}\}$. Fix $\al > 0$ and initial condition $\cX_0=(x_0,e_0)$ such that $x_0(i) = 1 \hspace{2mm} \forall i \in \boldV$ and $e_0(i,j) = 1 \hspace{2mm} \forall (i,j) \in \boldE$. Let $\tau = \inf\{t: |x_t| = 0\}$, the time to extinction. Then there exist constants $C, \gamma>0$ such that for $\lam < 1 + \al$, we have $\prob{\tau > C (\log n)^2} \rightarrow 0$ as $n \rightarrow \infty$, and for $\lam > a_1 + a_2\al$, with $a_1,a_2>0$ from Theorem~\ref{lattice-thm}, we have $\prob{\tau \leq e^{\gamma n}} \rightarrow 0$ as $n \rightarrow \infty$.
\end{theorem}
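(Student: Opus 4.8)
The two assertions are the finite-volume shadows of the two bounds in Theorem 1.1: the hypothesis $\lam<1+\al$ places us below the lower critical value $\lammin$, and $\lam>a_1+a_2\al$ places us above the upper critical value $\lampl$. The plan is to re-run, on $\Zn$, the two arguments that prove those bounds for $\bZ$, now tracking quantitative estimates: for $\lam<1+\al$ we dominate the infection by a subcritical branching process and sum the (exponentially small) survival probabilities over the $n$ seeds, obtaining extinction by time $O(\log n)$; for $\lam>a_1+a_2\al$ we import the block (renormalization) construction and control the resulting supercritical oriented percolation on the space--time cylinder $\Zn\times[0,\infty)$, obtaining survival to time $e^{\gamma n}$.

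\textbf{The logarithmic regime, $\lam<1+\al$.} The argument behind $\lammin\ge 1+\al$ can be cast, via the graphical construction, as a coupling in which the genealogy of infections of the CPA (started from an arbitrary configuration) forms a forest that is dominated, tree by tree, by a continuous-time branching process $\mathcal B$ with one root per initially infected vertex; the condition $\lam<1+\al$ is precisely what makes the offspring distribution of $\mathcal B$ have mean $m<1$, so that $\E|\mathcal B_t|=e^{-(1-m)t}$ and hence the extinction time $T$ of a single-rooted copy satisfies $\prob{T>t}\le e^{-(1-m)t}$. This coupling is assembled from the Poisson clocks attached to individual vertices and edges, so it applies verbatim on $\Zn$. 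Because $\mathcal B$ with $n$ roots is the superposition of $n$ independent single-rooted copies --- an elementary fact about branching processes, insensitive to the (unproven) attractiveness of the CPA --- the CPA on $\Zn$ started from all-infected, all-active dies out no later than $\max_{1\le i\le n}T_i$ with $T_i$ i.i.d.\ copies of $T$. Thus $\prob{\tau>t}\le n\,e^{-(1-m)t}$, and choosing $t=C\log n$ with $C>1/(1-m)$ gives $\prob{\tau>C\log n}\le n^{1-C(1-m)}\to 0$.

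\textbf{The exponential regime, $\lam>a_1+a_2\al$.} Fix space--time block dimensions $(L,T_0)$ for which the proof of $\lampl\le a_1+a_2\al$ shows that, on $\bZ$, the CPA stochastically dominates an $M$-dependent oriented percolation with each site open with probability $\ge 1-\eps$, for $\eps$ small enough that, after passing to genuine independence via the Liggett--Schonmann--Stacey comparison (replacing $\eps$ by a still-small $\eps'$), the percolation is well inside its supercritical phase. Each good-block event depends only on the graphical data in a bounded space--time window, so the same construction runs on $\Zn$ once $n>L$: tiling $\Zn$ into $m:=\lfloor n/L\rfloor$ blocks shows that the CPA on $\Zn$ started from all-infected, all-active dominates supercritical oriented percolation on the cylinder $\bZ_m\times\{0,1,2,\dots\}$ started from the full bottom row. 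For such a percolation, the event that the wet set is empty at level $k$ forces a dual contour of closed sites that winds once around the cylinder and so has length at least $m$; a Peierls count bounds the probability of any such contour appearing by level $k$ by $\sum_{\ell\ge m}C_1\,k\,m\,(C_2\eps')^{\ell}$, which is at most $C_1'\,k\,m\,(C_2\eps')^{m}$ provided $C_2\eps'<1$. Since $k$ levels correspond to continuous time $\asymp kT_0$, taking $k\asymp e^{\gamma n}/T_0$ yields $\prob{\tau\le Ce^{\gamma n}}\le C_3\,n\,e^{\gamma n}e^{-c_4 n}$ with $c_4=-\log(C_2\eps')>0$; this tends to $0$ whenever $\gamma<c_4$, so fixing such a $\gamma$ and the attendant $C$ completes the proof.

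\textbf{Where the work is.} Everything above is routine given Theorem 1.1 except for one structural point that must be taken from its proof. Since the CPA is not known to be attractive, a good block cannot be summarized by a single good/bad bit with monotonicity used to pass the infection to the neighboring blocks; the good-block event must instead be engineered so that it forces the required seeding of the adjacent blocks uniformly over the unknown configuration elsewhere in space. Granting that this is how $\lampl\le a_1+a_2\al$ is established, both the transfer of the construction to $\Zn$ and the contour estimate on the cylinder are standard (cf.\ the analogous treatment of the classical contact process on $\Zn$). The only parallel subtlety in the first part is that we need an exponential tail for $T$, not merely a.s.\ extinction, which is automatic once the dominating branching process is known to have mean offspring strictly below $1$.
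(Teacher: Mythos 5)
Your second part (the exponential regime) is sound and takes a mildly different route from the paper: where you close the argument with a Peierls contour count on the cylinder $\bZ_m\times\{0,1,\dots\}$, the paper instead uses a restart scheme --- it starts the dominated oriented percolation from half of $\Zn$, invokes the exponential bound $\prob{\{D\to\infty\}^c}\le Ce^{-\gamma|D|}$ together with density/right-edge estimates to recover $\Theta(n)$ occupied sites after $n/2$ levels, and iterates, so that $\tau$ dominates a geometric variable with failure probability $e^{-\Theta(n)}$ times the cycle length. Both are standard once one has the domination, and you correctly isolate the one nonstandard ingredient: because the CPA is not attractive, each good-block event must be certified uniformly over the behaviour on the block's space--time boundary, which is exactly how Section 3 of the paper is arranged (the paper in fact gets genuinely independent domination from this uniformity, so you do not even need the Liggett--Schonmann--Stacey step).

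The first part, however, has a genuine gap: the branching-process domination does not give subcriticality all the way up to $\lam=1+\al$. The genealogical forest of the CPA is dominated by the branching process in which every arrival of an infection clock during a particle's lifetime spawns a child; but a vertex can reinfect the same neighbour repeatedly after that neighbour recovers, so the mean offspring is \emph{not} $2\lam/(\lam+1+\al)$ (the quantity that is $<1$ exactly when $\lam<1+\al$). Counting only the first race (infect vs.\ recover vs.\ avoid) on each edge undercounts the genealogy and breaks the domination; counting reinfections, even optimistically, pushes the offspring mean above $1$ well before $\lam$ reaches $1+\al$ (the crude bound is $2\lam$, and even accounting for avoidance one gets a threshold strictly below $1+\al$). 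The condition $\lam<1+\al$ in this paper is not a branching-process condition at all: it is the condition under which the \emph{leftmost} infected site of an interval of infection moves left with probability $\lam/(\lam+1+\al)<1/2$ at each step, i.e.\ the boundary of an infected interval performs a random walk drifting inward. The paper's proof on $\Zn$ therefore proceeds quite differently: starting from all-infected there are no interval boundaries, so one first waits $O(\log n)$ time for $\Theta(n)$ recoveries, shows each resulting gap persists forever with a uniform positive probability $p$ (again by the inward-drifting random walk applied to the two edges of the gap), deduces that with high probability the persistent gaps break the cycle into runs of length $O(\log n)$, and then kills each such run in a further $O(\log n)$ time by the drifting-boundary argument. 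To repair your write-up you would need to replace the branching-process comparison with this gap-creation/broken-stick argument (or an equivalent), since no subcritical branching process covers the full range $\lam<1+\al$.
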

Theorem~\ref{ring-thm} says there is a phase transition in the order of the limiting survival time on $\Zn$, and the upper and lower critical values are linear in $\al$. {Finally, we consider the CPA on star graphs and find dramatically different behavior.}

\begin{theorem}\label{star-thm}
Let $G = (\boldV, \boldE)$ where $\boldV = \{0,1, \ldots ,n-1\}$ and $\boldE = \{(0,j):j \neq 0\} \cup \{(j,0):j \neq 0\}$, and initial condition $\cX_0$ such that $x_0(i) = 1 \hspace{2mm} \forall i \in \boldV$ and $e_0(i,j) = 1 \hspace{2mm} \forall (i,j) \in \boldE$. Let $\tau_{star} = \inf\{t: |x_t| = 0\}$ be the extinction time of the infection, and define
$$
\Delta = 2\left[(\lambda+\alpha+1)-\sqrt{(\lambda+\alpha+1)^2-4\alpha}\right]^{-1}.
$$
Then there exists $N$ such that 
$$
\lim_{K\to\infty}\inf_{n\ge N}\prob{\frac{1}{K}\left(\frac{n}{\log (n)^4}\right)^{\Delta} \leq \tau_{star} \leq Kn^{\Delta}} = 1.
$$
\end{theorem}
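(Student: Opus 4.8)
The plan is to exploit the special geometry of the star: conditioned on the trajectory $(x_t(0))_{t\ge 0}$ of the center, the $n-1$ leaves evolve as \emph{independent} three-state continuous-time chains. Call a leaf $k$ \emph{infected} (state A) if $x_t(k)=1$, \emph{vulnerable} (state V) if $x_t(k)=0$ and $e_t(0,k)=1$, and \emph{protected} (state P) if $x_t(k)=0$ and $e_t(0,k)=0$. While the center is infected, a leaf moves $\mathrm A\to\mathrm V$ at rate $1$, $\mathrm V\to\mathrm A$ at rate $\lambda$, and $\mathrm V\to\mathrm P$ at rate $\alpha$, with P absorbing; the instant the center heals, every P leaf becomes V (its edge $(0,k)$ reopens), and while the center is healthy a leaf can only move $\mathrm A\to\mathrm V$. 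In parallel, while the center is healthy it is reinfected at rate $\lambda$ times the number of infected leaves $k$ with $e_t(k,0)=1$, and at the moment the center heals this is \emph{every} infected leaf, since $e_t(k,0)$ cannot be blocked while the center is infected. From this one reads off the extinction mechanism exactly: extinction occurs precisely when the center heals carrying $J$ infected leaves and then fails to be reinfected, an event of probability $\big(\tfrac{1+\alpha}{1+\lambda+\alpha}\big)^{J}$ (each of the $J$ infected leaves reinfects the center at rate $\lambda$ but heals or blocks its edge $(k,0)$, at combined rate $1+\alpha$, first with the complementary probability; the clocks of distinct leaves are independent). So everything reduces to tracking $J$, the number of infected leaves, at the successive times the center heals.

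Second, I would run a fluid/ODE analysis of one \emph{active period} --- a maximal interval on which $x_t(0)=1$, whose length $T$ is $\mathrm{Exp}(1)$ by rule 2. If $(p_A,p_V)$ denote the fractions of leaves in A and V, these solve a linear system whose generator has characteristic polynomial $\mu^2+(1+\lambda+\alpha)\mu+\alpha$; its dominant eigenvalue is exactly $\mu_+=-1/\Delta$, precisely because $\Delta=2\big[(\lambda+\alpha+1)-\sqrt{(\lambda+\alpha+1)^2-4\alpha}\big]^{-1}$. Consequently, if an active period begins with $\approx n$ leaves in $\mathrm A\cup\mathrm V$ --- the generic situation, since the preceding healthy interlude reopened all the edges $(0,k)$ and turned every protected leaf back into a vulnerable one --- then after time $T$ the number of leaves not yet permanently protected, and hence (via the eigenvector) the number of infected leaves, is $\asymp n\,e^{-T/\Delta}$. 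Making this rigorous is a concentration argument: the relevant counts are sums of $\approx n$ independent leaf chains, so Chernoff/Azuma bounds pin them to the ODE prediction up to a polylogarithmic relative error \emph{as long as} they are $\gg 1$; once the infected count is at most polylogarithmic one instead compares it to a (sub)critical branching process and uses a Poisson approximation for the probability of hitting $0$ or staying small.

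Third, I assemble the pieces. Because $T\sim\mathrm{Exp}(1)$, the infected count at the end of an active period is $\lesssim \log n$ with probability $\asymp \prob{T\gtrsim \Delta\log n}=n^{-\Delta+o(1)}$, and conditionally on that the center then fails to be reinfected with probability bounded below by a positive constant; conversely, whenever the infected count is $\gg 1$ --- the overwhelmingly likely case --- the center is reinfected within time $O(1/J)=o(1)$, so the next active period again starts with $\approx n$ vulnerable leaves, and the successive active periods behave, to leading order, like independent identically distributed copies in their effect on $J$. Hence the number of active periods before extinction is geometric with success parameter $n^{-\Delta+o(1)}$ and each period lasts $\Theta(1)$ time, so $\tau_{star}=n^{\Delta+o(1)}$; quantitatively, the clean lower bound $\prob{T\gtrsim\Delta\log n}\gtrsim n^{-\Delta}$ on the per-period extinction probability yields $\tau_{star}\le Kn^{\Delta}$, while the matching upper bound, which carries the relative error of the concentration step, yields $\tau_{star}\ge \tfrac1K\big(n/\log^4 n\big)^{\Delta}$; both hold uniformly in $n$ as $K\to\infty$ because they come from a simple geometric tail.

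The hard part is the near-extinction window, where $J$ is between $1$ and polylog: this is exactly the regime that decides survival, the ODE approximation is too crude there, and --- crucially --- the process is not known to be attractive, so one cannot sandwich it between simpler monotone processes. One must therefore control the joint law of $(x_t(0),\,J,\,\#\{\text{protected leaves}\})$ directly through this window, proving simultaneously (i) that when $J$ is moderately large the center is reinfected fast enough to ``reset'' the period and that few of the $J$ infected leaves are lost in the interim, and (ii) that when $J=O(1)$ the center fails to be reinfected with the right \emph{order} of probability; reconciling these two opposing estimates so that both the $n^{\Delta}$ upper bound and the $(n/\mathrm{polylog}\,n)^{\Delta}$ lower bound fall out is the technical core, and is also the source of the polylogarithmic gap between the two bounds in the statement.
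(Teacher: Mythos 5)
Your overall route is the same as the paper's: decompose the process into one-phases (center infected) and zero-phases (center healthy), observe that conditioned on the center the leaves evolve as independent chains whose one-phase generator has characteristic polynomial $\mu^2+(\lambda+\alpha+1)\mu+\alpha$, so that the infected-leaf count decays like $ne^{-\gamma_1 T}$ with $\gamma_1=1/\Delta$ over a one-phase of length $T\sim\mathrm{Exp}(1)$, and conclude that the per-cycle extinction probability is $n^{-1/\gamma_1+o(1)}$ and the number of cycles to extinction is geometric. Where you differ is in technique for the quantitative steps: for the death probability the paper does not use concentration plus a near-critical branching approximation, but computes $p(\eta n,0)=\E_T\big[(1-\hat\lambda u(T))^{\eta n}(1-\hat\lambda v(T))^{(1-\eta)n}\big]$ exactly via the binomial generating function and optimizes a cutoff, which handles all regimes of the infected count at once (Lemmas~\ref{death-probability-lower-bd} and~\ref{death-probability-upper-bound}); for the survival lower bound it uses the exponential supermartingale $e^{-\theta Z_i}$ and optional stopping to show the chain rarely drops from $\Theta(n)$ infected leaves into the polylogarithmic window (Lemma~\ref{supermart}, Proposition~\ref{survival-time-lb}). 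This largely sidesteps the delicate ``near-extinction window'' analysis you flag as the technical core.

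The genuine gap is in your state space. A leaf has four states, not three: besides A, V ($=0A$) and P ($=0D$) you are missing $1D$, an infected leaf whose edge \emph{to} the center has been blocked by the healthy center during a zero-phase (your own rate $1+\alpha$ for ``heals or blocks its edge $(k,0)$'' creates such leaves). A $1D$ leaf is not reset when the center heals --- its edge reopens only when the leaf itself recovers --- so it persists into subsequent one-phases as an infected leaf that can neither reinfect the center nor be refreshed, and the flow $1A\to 1D\to 0A$ is a persistent drain on the infected count that your i.i.d.-cycles picture does not see. This is exactly what forces the paper to introduce the auxiliary chains $Z_i$ (which ignore $1D$ transitions), $W_i$ and $Z_i^*$, to prove that the $1D$ population stays $O((\log n)^2)$ and its cumulative drain stays $O((\log n)^3)$ (Lemma~\ref{tight}), and to build the sandwich coupling $Z_i^*\mathbbm{1}_\sfG \le K_i\mathbbm{1}_\sfG \le Z_i\mathbbm{1}_\sfG$ (Lemma~\ref{couple}); the $(\log n)^4$ in the final lower bound comes from combining this $O((\log n)^3)$ coupling error with the supermartingale cutoff, not from the relative error of a concentration step as you suggest. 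Without some substitute for this bookkeeping, your reduction to independent, identically behaving active periods is not justified --- and, as you note yourself, one cannot wave the discrepancy away by monotonicity, because the model is not attractive.
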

Theorem~\ref{star-thm} says the survival time on the star graph is of polynomial order in $n$ and the exponent depends on $\lam$ and $\al$.

\subsection{Graphical Construction}
One popular technique for analyzing contact process models is the Harris construction, which we define here and use throughout the paper. Consider each edge and each vertex on its time axis, and define events using Poisson processes as follows. Figure~\ref{fig:Harris} gives a graphical example of the Harris construction.

\begin{enumerate}
\item Define a Poisson process with intensity $\lambda$ on each directional edge. Then the waiting time starting from time $s$ until the next arrival along the edge $(j,k)$ is $I(s;j,k) \sim \textrm{Exp}(\lambda)$. These arrivals can be thought of as vertex $j$ attempting to infect vertex $k$, and the infection only occurs if $x_{t-}(j) = 1, x_{t-}(k) = 0,$ and $e_{t-}(j,k) = 1$ just before time $t=s + I(s;j,k)$.
\item Define a Poisson process with intensity $1$ on each vertex. Then the waiting time starting from time $s$ until the next arrival is $r(s;j) \sim \textrm{Exp}(1)$. These arrivals can be thought of as vertex $j$ ``attempting" to recover, with a recover only occuring if $x(j) = 1$ at time $s+r(s;j)-$.
\item Define a Poisson process with intensity $\alpha$ on each directed edge $(j,k)$. Then the waiting time starting from time $s$ until the next arrival is $b(s;j,k) \sim \textrm{Exp}(\alpha)$. These arrivals can be thought of as vertex k ``attempting" to avoid vertex j, and the avoidance only occurs if $x(j) = 1, x(k) = 0,$ and $e(j,k) = 1$ at time $s+b(s;j,k)-$.
\end{enumerate}

\begin{figure}
\includegraphics[width=.9\textwidth, center]{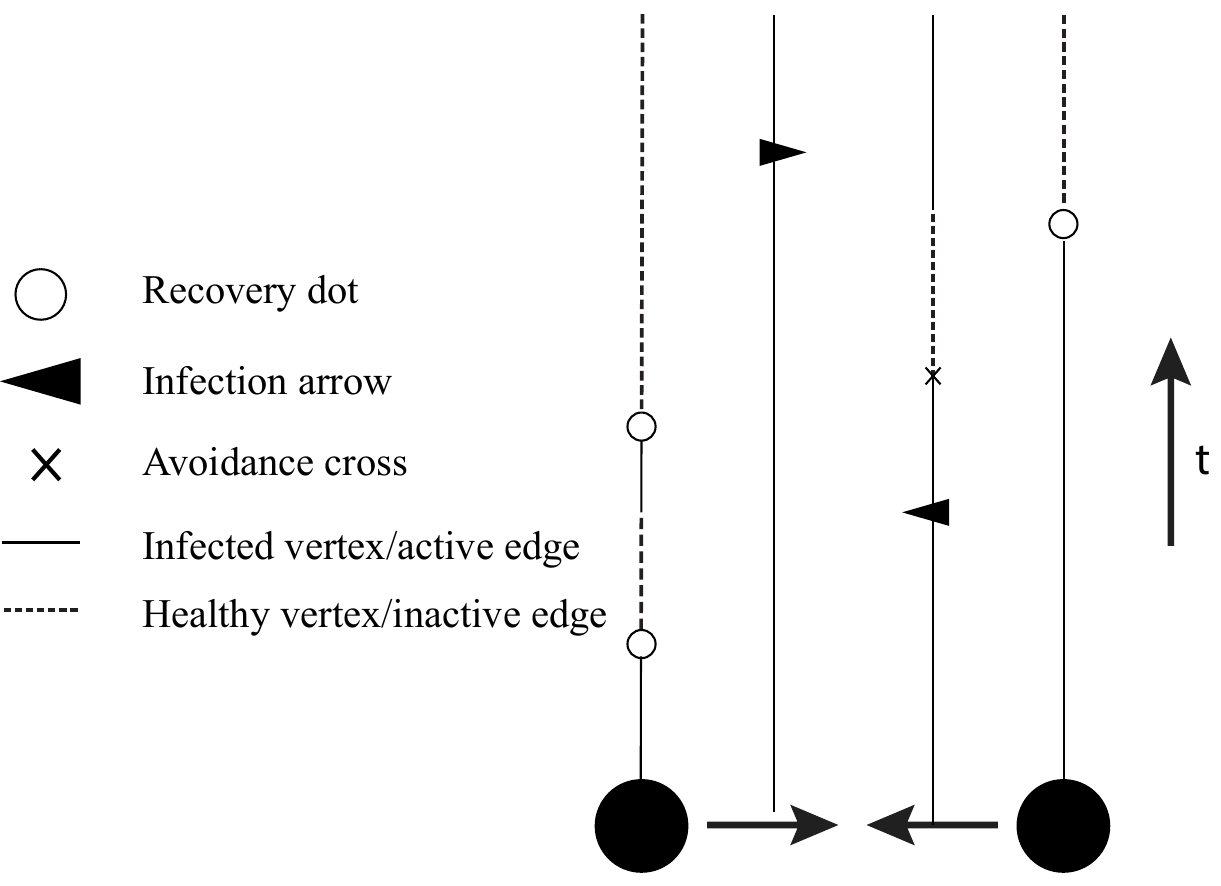}
\caption{\label{fig:Harris} The Harris construction}
\end{figure}

For the classical contact process, the Harris construction provides a coupling of all initial states, which preserves the partial ordering of containment. This monotonicity (or attractiveness) is used to derive many of the known results. The CPA does not appear to possess this kind of monotonicity. Although we do not have a proof of this claim, certainly the Harris construction fails to preserve the partial order on vertex states. An example of the non-monotonicty of the CPA with respect to the set of infected vertices in the Harris construction is shown in Figure~\ref{fig:non-monotone}. Although the initial infected set is larger in the bottom figure, the final infected set is smaller. Nonetheless, this graphical construction of the CPA will be useful in our proofs.

\begin{figure}
\includegraphics[width=\textwidth, center]{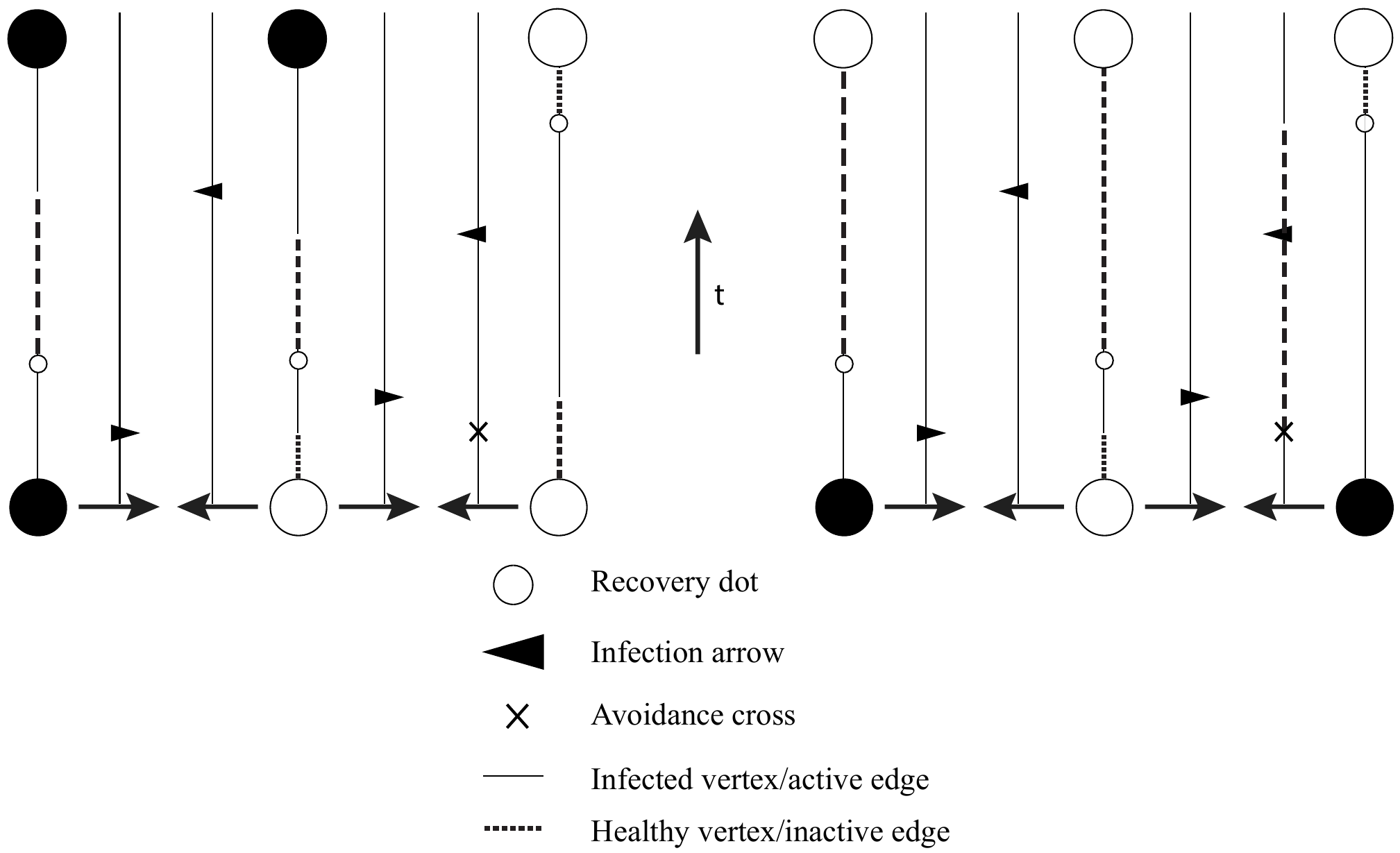}
\caption{\label{fig:non-monotone} The contact process with avoidance is not monotonic in the Harris construction. The event times are the same in each realization, but the additional initially infected vertex in the right realization leads to fewer infected vertices at time $t$.}
\end{figure}

\subsection{Background and related results}
It is well-known that the classical contact process on $\boldZ$ has a critical value $\lam_c>0$, such that when $\lam > \lam_c$ the infection survives forever with positive probability on $\boldZ$ (and has survival time $e^{\Theta(n)}$ on $\Zn$), and when $\lam < \lam_c$ the infection dies almost surely on $\boldZ$ (and survives for $O(\log n)$ time on $\Zn$).  For more on the classical contact process on $\boldZ$ and $\Zn$, see Liggett \cite{Liggett1999}. In contrast, on the star graph and on random graphs having power law degree distributions the limiting survival time is exponential for all $\lambda>0$, and the metastable densities have been derived for a number of models \cite{spread-viruses-internet, VHCan, CS2015,Chatterjee2009, Mountford2013}.

Our model bears resemblance to the adaptive SIS model proposed by \cite{GrossAda}, wherein edges between susceptible and infected individuals are `rewired', rather than deactivated. This model has been of considerable interest in the physics literature \cite{GrossRev, adaptSIS}. Study of this model and its variants has to date been restricted to mean field approximations, moment closures, and simulation results.

Guo, Trajanovsky, van de Bovenkamp, Wang, and Mieghem \cite{GuoEtAl} study a variant of the adaptive SIS model in \cite{GrossAda} more closely related to our contact process with avoidance in which healthy-infected neighbor pairs deactivate the two-way edge between them at rate $\alpha$, and when both vertices are healthy, reactivate the edge at rate $\xi$. They study this model on the complete graph and derive an epidemic threshold using differential equation approximations. Szab\'o-Solticzky, Berthouze, Kiss and Simon~\cite{SBKS:2016} study another variant of the adaptive SIS model where SI edges are deleted at rate $\alpha$ and SS edges are created at rate $\xi$ by independent processes, and study the existence of stable oscillations for this model.

The SIR epidemic, in which infected vertices are removed from the graph upon recovery, has also been studied on evolving graphs. \cite{Jacobsen2016} study a model in which infected vertices are able to activate and deactivate their edges using ODE and pair approximation. \cite{DurrettJunge} study the evoSIR model, in which SI edges rewire at some rate $\alpha$, and find a critical infection rate $\lambda_c$ above which there is positive probability a large epidemic occurs. The long term behavior of the SIR epidemic on evolving graphs tends to be easier to understand than that of the SIS epidemic because in the former case each vertex can become infected at most once.\ 


Remenik \cite{Remenik} proposed an ecologically inspired contact process model, in which sites of $\boldZ$ may become uninhabitable, thereby blocking passage and eliminating infection. His model differs from ours in that the appearance of an uninhabitable site does not depend on the state of neighboring vertices, and the lifetime of uninhabitable sites can be controlled independently of the other process dynamics. This model is monotonic in its parameters when each is viewed individually (although changing multiple together can create incomparable processes, and he proves phase transitions in both the infection rate and decay rate of uninhabitable sites. Jacob and M\"orters \cite{JacobMorters} consider a contact process on evolving scale free networks, and prove that $\lambda_c > 0$ on the evolving graph for certain power-law degree distributions (and sufficiently fast rewiring dynamics) where $\lambda_c = 0$ on the static graph. However, in their model, vertices rewire independently of the state of the graph, and so given the current edges, the future edges are independent of the vertices. This is not true for the contact process with avoidance. Foxall \cite{Foxall} considers an SEIS model on $\boldZ$, in which infected vertices have an incubation period prior to becoming infectious. He claims this model is also not likely to be attractive, and he proceeds to prove existence of a phase transition. However, edges in this model do not evolve. \cite{Durrett1991} study the SIRS epidemic, in which infected vertices enter a removed state for some time after recovery, during which they do not interact at all with other vertices. This model is not monotonic in the usual sense, but their results are limited to the case of $\boldZ^2$ and rely on isoperimetric properties specific to this lattice.

One can also consider similar models with other modes of avoidance. In particular, two other models seem most natural to us in this regard. First, one could consider a model in which infected vertices rather than healthy vertices do the avoiding. This reflects situations in which infected individuals are quarantined to prevent the spread of infection. In this case, an infected node deactivates all edges from itself when it avoids and remains avoiding until it recovers. Another possibility is a model with undirected edges where a healthy vertex avoids an infected vertex by deactivating the bidirectional edge between the two and remains avoiding until the next time both vertices are healthy. In the case of the star graph, we believe that similar results hold for both these alternative models. Applying the heuristic argument we give in section 5 suggests the survival time should still be polynomial in $n$, but with a different power depending on the choice of model. On $\boldZ$ and $\Zn$ however, our upper bound proof techniques do not appear to work for these alternative models.

The proof of Theorem~\ref{ring-thm} only requires the existence in $G$ of a self-avoiding path of length $\Omega(n)$ to conclude exponential survival in $n$ of the CPA. This implies the existence of a supercritical regime on any graph satisfying this condition. Recent work \cite{HD2018, BNNS} has shown that the classical contact process has a subcritical regime only on finite graphs whose degree distributions have exponential tails. These results provide insight into many useful classes of graphs, including power law random graphs and Galton-Watson trees. However, the proofs of exponential survival for all $\lambda > 0$ in the subexponential tails case uses the behavior of the contact process on star graphs as a key ingredient. Because the contact process with avoidance exhibits qualitatively different behavior on stars, whether there exist graphs whose degree distributions have subexponential tails for which the contact process with avoidance has a subcritical regime is an open question. In particular, the cases of power law random graphs and Galton-Watson trees are of interest.


To further explore the phase transition on $\boldZ$ and $\Zn$ we simulated the CPA model for a range of values of $\lam$ and $\al$ on $\Zn$ with $n=500$ vertices. Simulation results appear to indicate that the model is stochastically ordered in $\lam$ for fixed $\al$, in which case a single $\lam_{\al}$ would exist. It also appears that $\lam_{\al}$ is linear in $\alpha$ with a slope between $1.9$ and $2.1$. Figure~\ref{fig:heatmap} shows a survival heatmap for various combinations of $\lam$ and $\al$. We performed 30 iterations of each combination of $\lam$ and $\al$, and the greyscale intensity indicates the proportion of iterations that survived. When $\al = 0$ the simulation identifies that the critical value, which is known to be approximately $1.65$~\cite{Liggett1999}, is between $1.5$ and $1.7$. Simulations with large $\lam$ and $\al$ are expensive, and so we did not simulate as extensively in that case. However, when $\lam = 191.5$ and $\al = 100$ the process appears to die out, while for $\lam = 211.7$ and $\al = 100$ the process appears to survive, which is consistent with a slope between $1.9$ and $2.1$.

\begin{figure}
\includegraphics[scale=.4, center]{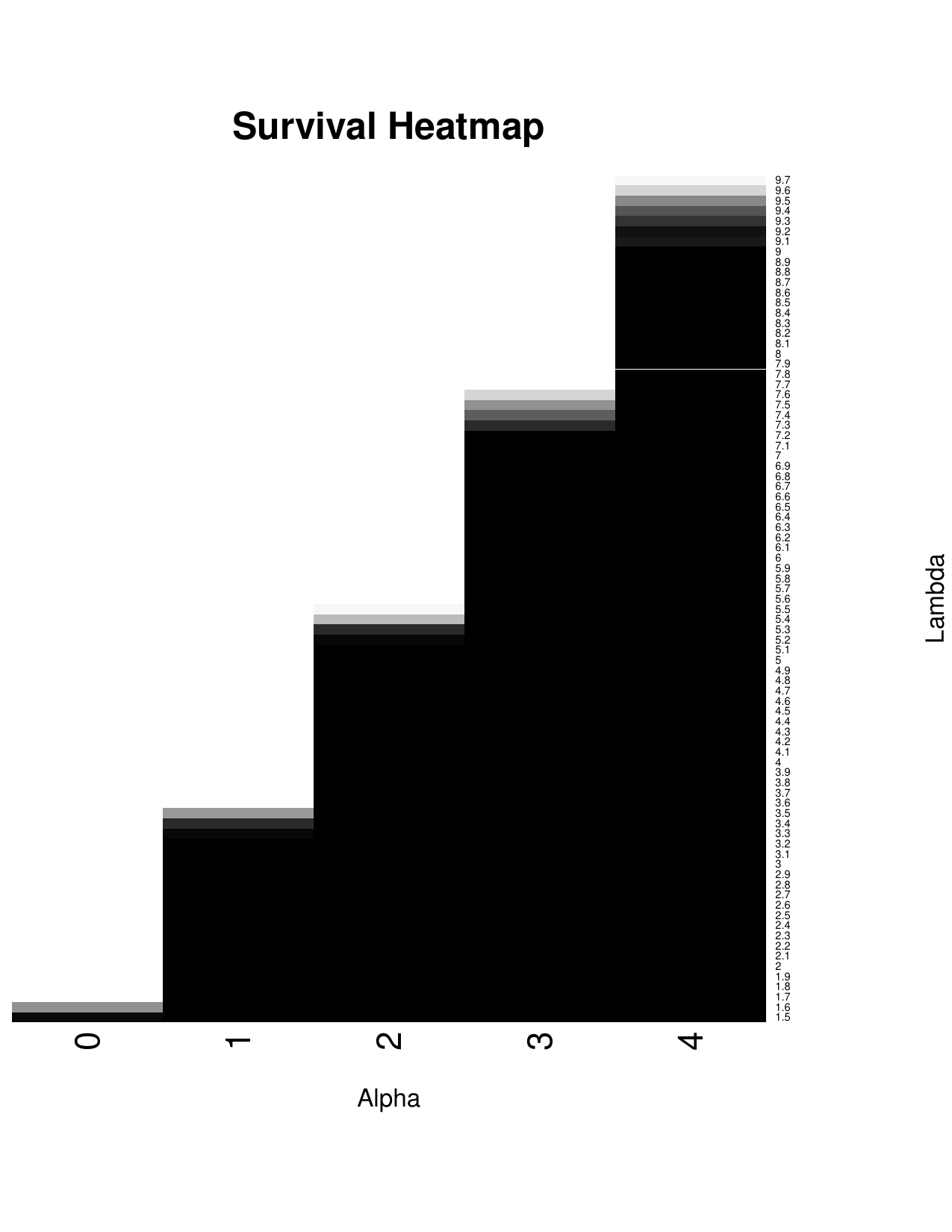}
\caption{\label{fig:heatmap} Survival Heatmap}
\end{figure}

The remainder of the paper is devoted to proving our three main theorems.

\section{Lower Bound for $\lammin$ on $\boldZ$}\label{sec:Z lower bd}

\begin{lemma}\label{RWbound}
Fix $\al > 0$. 
Then $\lammin \geq 1 + \al$.
\end{lemma}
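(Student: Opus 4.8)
The plan is to show that when $\lambda \leq 1 + \alpha$, the infection dies out almost surely on $\mathbb{Z}$ by comparing the rightmost (and, symmetrically, leftmost) infected site to a random walk or supermartingale that drifts back toward the origin. The key observation is the following: for the infection to advance past the current rightmost infected vertex, say vertex $k$, the edge $(k, k+1)$ must first be \emph{active}, and then an infection arrival must fire along it before $k$ recovers. But while $k$ is infected and $k+1$ is healthy, the edge $(k,k+1)$ is being deactivated at rate $\alpha$, and once deactivated it stays inactive until $k$ recovers. So from the moment $k$ becomes the rightmost infected site with $(k,k+1)$ active, the three competing clocks are: infection along $(k,k+1)$ at rate $\lambda$, recovery of $k$ at rate $1$, and deactivation of $(k,k+1)$ at rate $\alpha$. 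The infection spreads to $k+1$ only if the rate-$\lambda$ clock wins, which happens with probability $\lambda/(\lambda + 1 + \alpha) \leq 1/2$ when $\lambda \leq 1+\alpha$.

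**The steps** I would carry out are as follows. First, define $r_t = \max\{i : x_t(i) = 1\}$ (with the convention $r_t = -\infty$ if $|x_t| = 0$), the position of the rightmost infected site. I would track the sequence of times at which $r_t$ changes and argue that $r_t$ is dominated by a random walk that takes a step to the right with probability at most $\lambda/(\lambda+1+\alpha)$ and otherwise does not move right (it either stays or moves left). The subtlety is that $r_t$ can increase by more than one site only if the site immediately to the right was already infected, which cannot happen by definition of $r_t$; so increases are always by exactly $+1$ and always go through the mechanism above. Second, I would handle the bookkeeping that after $k$ recovers, the edge $(k, k+1)$ reactivates (rule 4 requires $x(k) = 0$), but by then $r_t \leq k$ already, so this reactivation only matters if the infection later climbs back to $k$ — and each such attempt is again a fresh instance of the same three-clock race. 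Embedding these into a discrete-time random walk $S_m$ with i.i.d. increments bounded above by a Bernoulli$(\lambda/(\lambda+1+\alpha))$-type step, I conclude that when $\lambda \leq 1+\alpha$ the walk $S_m$ has nonpositive drift, and combined with the symmetric argument for the leftmost site $\ell_t$ and the fact that the infection occupies only sites in $[\ell_t, r_t]$, the total infected population is squeezed to $0$; one then upgrades ``drift $\le 0$'' to ``dies almost surely'' using that the interval $[\ell_t, r_t]$ is also losing mass internally at rate $1$ per site (recoveries), so the process cannot persist with a bounded number of infected sites forever, and recurrence of the driftless walk forces $r_t - \ell_t$ to hit a state with no infecteds.

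**The main obstacle** I anticipate is making the domination of $r_t$ by a random walk fully rigorous, because the CPA is not monotone, so I cannot simply couple to a larger/nicer process. The issue is that the rate at which $r_t$ attempts to move right depends on whether the edge $(r_t, r_t+1)$ is currently active, which in turn depends on the detailed history (how long $r_t$ has been infected with $r_t + 1$ healthy), and the infection can also drop $r_t$ and then have it climb back, so the ``step distribution'' is not literally i.i.d. The clean way around this is to work directly with the Harris construction: condition on the entire graphical representation restricted to the relevant edge and vertex, and observe that regardless of the past, \emph{each} time vertex $k$ becomes rightmost-infected, the next relevant event (is it a $\lambda$-arrival on $(k,k+1)$ while the edge is active, versus a recovery at $k$, versus an $\alpha$-deactivation) has the stated winning probabilities by the memorylessness and independence of the Poisson clocks — so I get a \emph{stochastic} upper bound on the displacement of $r_t$ by a mean-zero (or negative-drift) random walk even without pathwise monotonicity. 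Verifying that this stochastic domination is valid at the level of the stopping-time sequence, and that no pathological behavior (such as $r_t$ increasing without an intervening recovery that ``resets'' the edge) is possible, is where the care is needed; everything after that is a standard random-walk recurrence argument.
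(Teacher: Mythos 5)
Your proposal is correct and follows essentially the same route as the paper: track the leftmost and rightmost infected sites, observe that each advance is a three-clock race won with probability $\lambda/(\lambda+1+\alpha) \le 1/2$ when $\lambda \le 1+\alpha$, dominate the edge processes by random walks with drift toward each other, and conclude extinction when they cross. The only (harmless) difference is that you also address the boundary case $\lambda = 1+\alpha$ via recurrence of the driftless walk, which is not needed to establish $\lammin \ge 1+\alpha$.
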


\begin{proof}
Consider $(\cX_t)_{t\ge 0}$ with initial configuration $\cX_0$ where $|x_0| < \infty$. Since $|x_0| < \infty$, $x_0$ must have leftmost and rightmost infected vertices whose locations we will denote by $l_0$ and $r_0$. Let $(l_t)_{t\ge 0}$ and $(r_t)_{t\ge 0}$ track the locations of the leftmost and rightmost infected vertices in $\cX_t$ (with the convention that $l_t=\infty$ and $r_t = -\infty$ if $x_t\equiv 0$.), We now define an embedded discrete time process $(L_s)_{s\in \bZ_+}$ of $(l_t)_{t\ge 0}$ as follows. A step in the chain $L_s$ occurs when either 
\begin{enumerate}
\item Vertex $L_s$ infects vertex $L_s-1$, in which case $L_{s+1} = L_s-1$, or 
\item Vertex $L_s$ recovers, in which case $L_{s+1} = l_{t+}$ where $l_{t+}$ is the new leftmost infected vertex at time $t$ immediately after vertex $L_s$ recovers. 
\end{enumerate}
Now observe that $L_{s+1} < L_s$ will hold only if $L_s$ attempts to infect $L_{s-1}$ before either $L_s$ recovers or $L_{s-1}$ avoids $L_s$. So then for $\lam < 1 + \al$ $$\prob{L_{s+1} < L_s} < 1/2$$
By symmetry we can construct an analogous discrete time process $R_s$ starting from $r_0$ such that $$\prob{R_{s+1} > R_s} < 1/2$$
As long as $\cX_t$ persists we are assured $L_s \leq R_s$. We thus observe that by the first time $R_s < L_s$ the process $\cX_t$ must have reached its absorbing state. By our choice of $\lam < 1+\al, L_s$ and $R_s$ are dominated by random walks with positive and negative drifts respectively and $L_0 \leq R_0$ and so with probability 1 they will eventually cross and $\cX_t$ will have died out.
\end{proof}

\section{Upper Bound for $\lampl$ on $\boldZ$}
For the classical contact process the supercritical regime can be proved by comparison with an oriented percolation process. The idea is to divide up spacetime into nonoverlapping boxes and declare a box ``good'' if the infection can successfully pass through on the time axis. The boxes can then be thought of as sites in an oriented site percolation model, which is known to survive strongly when the occupation probability is sufficiently large \cite{percolation}. If the oriented percolation model is supercritical, then the infection survives strongly by propagating through the good regions with positive probability.

In the case of the classical contact process we know from monotonicity that ``goodness" of regions is positively correlated. Thus, if we can show that a region is good with probability at least $p$ using only events in the part of the Harris construction contained in that region, we can then dominate an oriented site percolation with occupancy probability $p$. However, the contact process with avoidance is not monotonic in the Harris construction, and so we must deal with the dependence among regions in a different way. We do this by finding a uniform bound on the probability that a given region is good regardless of what happens on its spacetime boundary and show this probability can be made arbitrarily close to 1. Section 3 formalizes and proves this assertion.



We begin by defining our regions. Let $\tau \in \bR$ be a fixed timescale, which will be chosen later to depend on $\alpha$. For each $k\in \boldZ$ and integer $\ell \ge 0$ such that $k+\ell$ is even, define the spacetime region $R_{k,\ell} = \{i : 2k\le i \le 2k+3\}\times \{(i,j) : 2k\le i,j \le 2k+3\}\times [\ell\tau,(\ell+1)\tau)$, which is a subset of $\bZ\times\boldE\times \bR_+$. Note that each block, $R_{k,\ell}$, contains 4 vertices and the edges between them over a time interval of length $\tau$. For convenience we will call the vertices in $R_{k,\ell}$ $\{0,1,2,3\}$, and we will consider waiting times to events using the Harris construction defined in Section 1.


We now define some notation to use for diagrams of states of vertices and edges among $\{0,1,2,3\}$.  Let $\bt$ denote an infected vertex, let $\circ$ denote a healthy (susceptible) vertex, and let $?$ denote a vertex that is either healthy or infected. Let $\bl$ denote a blocked right-pointing edge, that is, $e_t(i,i+1) = 0$, so the vertex $i+1$ is avoiding the infected vertex $i$. Similarly, let $\br$ denote a blocked left-pointing edge, and let $\Lra$ indicate that both the left- and right-pointing edges are active (open).  Let $-$ indicate any of the three possible states for the pair of edges between $i$ and $i+1$. Note that under our dynamics, we can never have $e_t(i,i+1) = e_t(i+1,i) = 0$ provided we don't have such a configuration initially. For a region $R_{k,\ell}$:
\begin{enumerate}
\item Let $\Aiil$ denote the states that contain $\bt\Lra\bt\bl\,?-?$.
\item Let $\Aiir$ denote the states that contain $\bt\Lra\bt\br\bt-?$.
\item Let $\Aiio$ denote the states that contain $\bt\Lra\bt\Lra?-?$
\item Let $\Aiis$ denote the union of 1-3 and their reflections across the middle edge.
\item Let $\Aiiil$ denote the states that contain $\bt\Lra\bt\Lra\bt\bl \,?$.
\item Let $\Aiiir$ denote the states that contain $\bt\Lra\bt\Lra\bt\br\bt$.
\item Let $\Aiiio$ denote the states that contain $\bt\Lra\bt\Lra\bt\Lra?$.
\item Let $\Aiiis$ denote the union of 5-7 and their reflections across the middle edge.
\item Let $\Aiv$ denote the states that contain $\bt\Lra\bt\Lra\bt\Lra\bt$.
\end{enumerate}
We call the region $R_{k,\ell}$ \textbf{good} if starting from any of the configurations in $\Aiis$ at time $\ell\tau$ we end in state $\Aiv$ at time $(\ell+1)\tau$. The following lemmas identify a sequence of events in the region $R_{k,\ell}$ such that starting from any initial configuration in $\Aiis$ at time $0$ we reach state $\Aiv$ at time $\tau$ regardless of what happens on the external spacetime boundary of $R_{k,\ell}$, and that for fixed $\al > 0$ this probability can be made arbitrarily close to $1$ with appropriate choices of $\tau$ and $\lambda$.

\begin{figure}
\includegraphics[width=\textwidth]{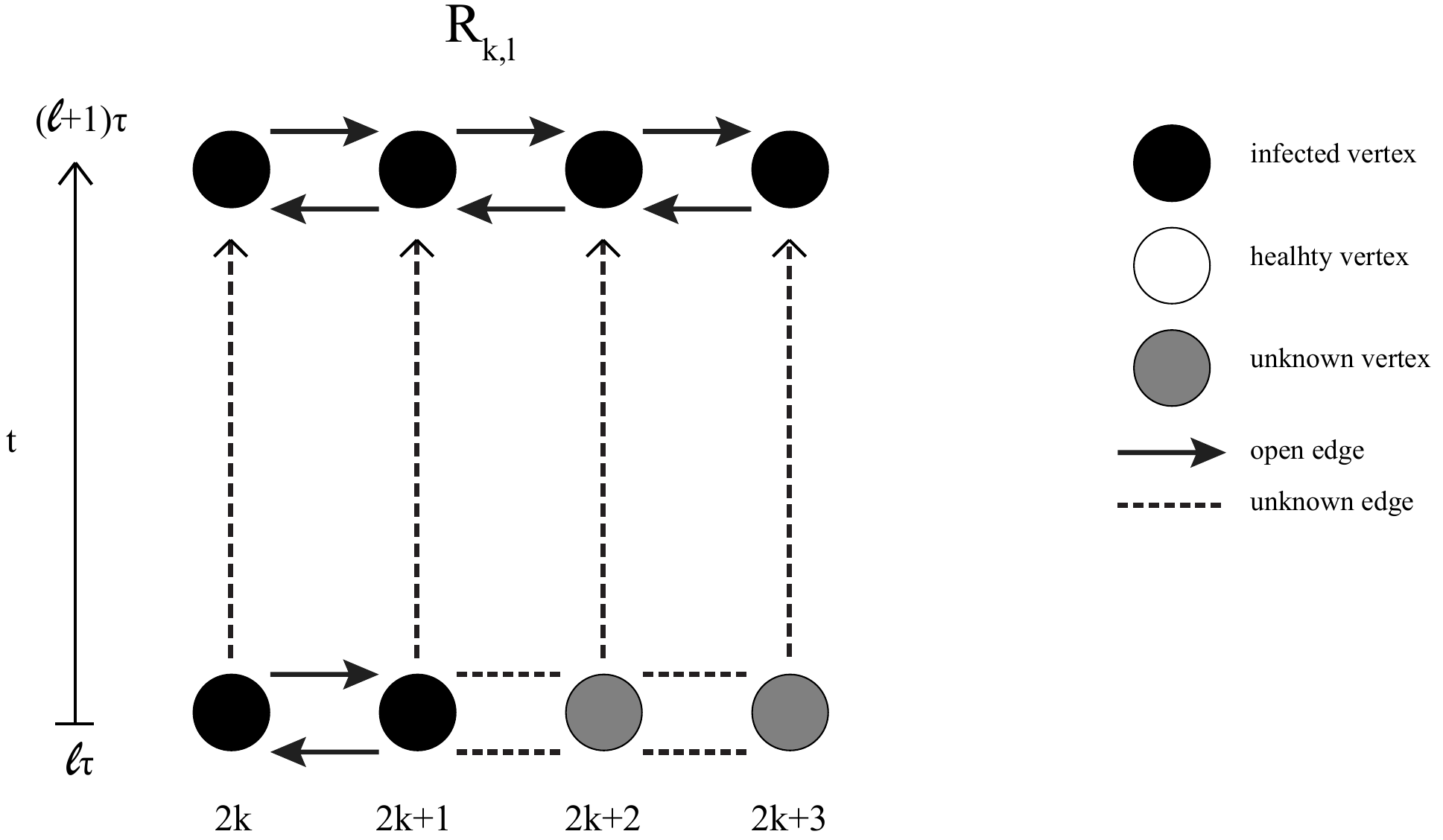}
\caption{A good region.}
\end{figure}

\begin{lemma}\label{A2star}
Fix $\alpha>0$ and $p \in (0,1)$. There exist $\lambda^* = \lambda^*(p,\alpha)$ and $ t = t(p,\alpha)$ such that for any $\lambda\ge \lambda^*$, starting from any configuration in $\Aiis$ on $\{0,1,2,3\}\times \{(i,j) : i,j\in\{0,1,2,3\}\}$ at time $0$, the probability of hitting a state in $\Aiiis$ by time $t$ is at least $1-p$.
\end{lemma}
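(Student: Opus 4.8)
\noindent\textbf{Proof proposal.} First observe that, up to the reflection symmetry built into its definition, $\Aiiis$ is the set of configurations in which vertices $0,1,2$ are infected and all four directed edges among them are open: once these hold, the dynamics forbid a doubly-blocked edge and force $e_t(i,j)$ back to $1$ whenever $x_t(i)=0$, so the pair consisting of the state of vertex $3$ and of the edges between $2$ and $3$ is automatically one of the three patterns appearing in $\Aiiil,\Aiiir,\Aiiio$. By that same reflection symmetry we may assume the initial configuration lies in $\Aiil\cup\Aiir\cup\Aiio$, so that vertices $0,1$ are infected with $e_0(0,1)=e_0(1,0)=1$; it then remains to infect vertex $2$ and to open whichever of the edges between $1$ and $2$ is blocked.

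The plan is to exhibit, in each of the three cases, a short sequence of Harris marks supported on $\{0,1,2\}$ and the edges among them that drives the configuration into $\Aiiis$, and to bound below the probability that this sequence is realized within a fixed time $t$ once $\lam$ is large. The sequences are: in $\Aiio$, if vertex $2$ is already infected we are in $\Aiiis$ at time $0$, and otherwise we let vertex $1$ infect vertex $2$; in $\Aiir$ (where $e(1,2)=1$, $e(2,1)=0$ and vertex $2$ is infected) we wait for vertex $2$ to recover, which instantly resets $e(2,1)$ to $1$, and have vertex $1$ re-infect it before it can avoid; in $\Aiil$ (where $e(1,2)=0$) we wait for vertex $1$ to recover, which instantly resets $e(1,0)$ and $e(1,2)$ to $1$, have vertex $0$ re-infect vertex $1$ before it avoids, and then have vertex $1$ infect vertex $2$ if the latter is healthy. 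In every case the only step that is not essentially instantaneous for large $\lam$ is a single recovery, occurring at rate $1$; each remaining step is an infection at rate $\lam$ racing against an avoidance at rate $\al$ (and possibly a recovery at rate $1$), hence is won with probability at least $\lam/(\lam+\al+1)$.

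Two points then need to be verified. \emph{Uniformity over the spacetime boundary of $R_{k,\ell}$}: the recoveries of vertices $0,1,2$ and the re-infections along the edges $(0,1),(1,0),(1,2),(2,1)$ are governed by Poisson marks internal to $R_{k,\ell}$, and the states of these four edges are functions only of $x_t(0),x_t(1),x_t(2)$ and of the internal edge-marks; the exterior can influence $\{0,1,2\}$ only by occasionally infecting vertex $0$ (from its left neighbour) or vertex $2$ (from vertex $3$). One checks case by case that an extra infection delivered to an already-infected vertex does nothing, and that having vertex $2$ infected ``early'' only brings the configuration closer to $\Aiiis$, so no behaviour of the exterior can obstruct---or even delay---any of the sequences above; the worst case is that the exterior does nothing. \emph{The probability bound}: over $[0,t]$ the vertices $0,1,2$ receive a $\mathrm{Poisson}(3t)$-distributed number of recovery marks, hence at most some $M=M(p)$ of them outside an event of probability $<p/4$; choosing $t$ with $e^{-t}<p/4$ ensures the single triggering recovery occurs by time $t$ outside an event of probability $<p/4$; and each of the at most $M$ re-infections needed to restore a vertex that has recovered succeeds---the recovered vertex being re-infected before it avoids and before its re-infector recovers, which simultaneously restores the edges incident to it to the open state---with probability at least $1-(\al+1)/\lam$. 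Taking $\lam^{*}$ large enough that $M(\al+1)/\lam^{*}<p/4$ and that the $O(1)$ explicit infection races in the good sequence are jointly won with probability exceeding $1-p/4$ makes the probability of reaching $\Aiiis$ by time $t$ at least $1-p$, uniformly over the boundary, for every $\lam\ge\lam^{*}$.

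The main obstacle is the uniformity step: because the contact process with avoidance is not monotone, we cannot couple the restriction of $\cX_t$ to $R_{k,\ell}$ to an autonomous copy of the dynamics on $\{0,1,2,3\}$, and must instead argue directly that no sequence of infections entering the region at vertices $0$ or $2$ can prevent the prescribed internal marks from driving $R_{k,\ell}$ into $\Aiiis$. The accompanying bookkeeping nuisance is that a recovery of vertex $0$ or of vertex $2$ transiently reopens or closes edges incident to it, so one must track that all four edges among $\{0,1,2\}$ are back in the fully-open state by the time the good sequence completes; this is exactly where the largeness of $\lam$ enters, through the uniform bound $\lam/(\lam+\al+1)$ on each repair.
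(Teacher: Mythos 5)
Your proposal is correct and follows essentially the same route as the paper's proof: the same three-case split into $\Aiio$, $\Aiil$, $\Aiir$, the same prescribed event sequences (wait for the single necessary recovery, then win each infection-versus-recovery-versus-avoidance race, each of probability at least $\lam/(\lam+O(\al+1))$), and the same Poisson bound on the number of recoveries that must be repaired. Your explicit discussion of uniformity over the external spacetime boundary is a welcome elaboration of a point the paper only asserts, but it does not change the underlying argument.
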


\begin{proof}
We consider the three cases for initial configurations $\Aiio$, $\Aiil$, and $\Aiir$ separately.  Note that because the process dynamics are symmetric it suffices to consider starting with the left two vertices infected.
\begin{enumerate}
\item $\Aiio:$ 

Suppose the initial configuration is $\cX_0 \in A_{2,O}$. We reach $A_{3,*}$ by time $t$ if vertex $1$ attempts to infect vertex $2$ before time $t$, and this infection event occurs before vertex $0$ recovers, vertex $1$ recovers, or the edge $(1,2)$ becomes inactive. Letting $T_{3,*} = \inf\{s : \cX_s \in A_{3,*}\}$, we have
\begin{equation}
\begin{aligned}
\probxo{T_{3,*} > t} &\le \prob{I(0;0,1) > t} + \prob{I(0;0,1) > \min(r(0;0),r(0;1),b(0;1,2))}\\
&= e^{-\lambda t} + \frac{2+\alpha}{\lambda + 2 + \alpha}.
\end{aligned}
\end{equation}
Choosing $\lambda^* = 2(2+\alpha)/p$ and $t = \frac{1}{\lambda^*}\log(2/p)$ finishes the proof in this case.

%
%

\item $\Aiil:$ Suppose the initial configuration is $\cX_0 \in \Aiil$. We reach $\Aiiis$ by time $t = t_1 + t_2 + t_3$ if vertex 1 recovers before time $t_1$, vertex 0 is infected when this recovery occurs, vertex 0 attempts to infect vertex 1 by time $t_1 + t_2$, this infection occurs before vertex 1 recovers or the edge (1,2) becomes inactive, vertex 1 attempts to infect vertex 2 by time $t_1 + t_2 + t_3$, and this infection occurs before either vertex 0 or 1 recovers or the edge (1,2) becomes inactive. Define the random times $s_1 = \inf\{t > 0: \textrm{vertex 2 recovers}\}$, $s_2 = \inf\{t > s_1: \textrm{vertex 1 infects vertex 2}\}$. Letting $T_{3,*} = \inf\{s:\cX_s \in \Aiiis\}$ we have 
\begin{equation}
\begin{aligned}
\probxo{T_{3,*} > t} \le&  \prob{r(0;1) > t_1} 
\\&+ \prob{\textrm{vertex 0 is healthy when vertex 1 recovers}|r(0;1) \leq t_1} \\&+ \prob{I(s_1;0,1) > t_2} + \prob{I(s_1;0,1) > \min(r(s_1;0), b(s_1;0,1))} \\&+ \prob{I(s_2;1,2) > t_3} + \prob{I(s_2;1,2) > \min(r(s_2;0),r(s_2;1),b(s_2;1,2))}
\end{aligned}
\end{equation}
Choosing $\lam^{**} = 12(2 + \al)/p$, $t_1 = \log(6/p)$, and $t_2 = t_3 = \frac{1}{\lam^{**}}\log(6/p)$ yields
\begin{equation}
\begin{aligned}
\probxo{T_{3,*} > t} &\le 4p/6 + \prob{\textrm{vertex 1 is healthy when 2 recovers}|r(0;2) \leq t_1}
\end{aligned}
\end{equation}
Now given a fixed $t_1$, vertex 0 is healthy when vertex 1 recovers given $r(0;1) \leq t_1$ if for a fixed $k$ vertex 0 recovers at most $k$ times, and whenever vertex 0 recovers vertex 1 successfully reinfects vertex 0 before vertex 1 recovers or the edge (1,0) becomes inactive. The number of recoveries of vertex 0 in time $t_1$ is Poisson($t_1$). Define the random times $v_0 = 0$ and $v_k = \inf{v_k > v_{k-1}: \textrm{vertex 0 recovers}}$. Choose $k$ such that if $X\sim$ Poisson($t_1$) then $\prob{X > k} \leq p/6$ and $\lam \geq$ $6k(1+\al)/p$, then

\begin{equation}
\begin{aligned}
\prob{\textrm{vertex 0 is healthy when vertex 1 recovers}|r(0;1) \leq t_1} & \leq \prob{X > k} 
\\& \hspace{-3cm} + \sum_{m=1}^k \prob{I(v_{k-m};1,0) > \min(r(v_{k-m};0),b(v_{k-m};1,0)}\\
&\leq 2p/6
\end{aligned}
\end{equation}
and choosing $\lam^* = \max\{\lam^{**}, 6k(1+\al)/p\}$ and $t = t_1+t_2+t_3$ finishes the proof in this case.

\begin{figure}
\includegraphics[width=\textwidth]{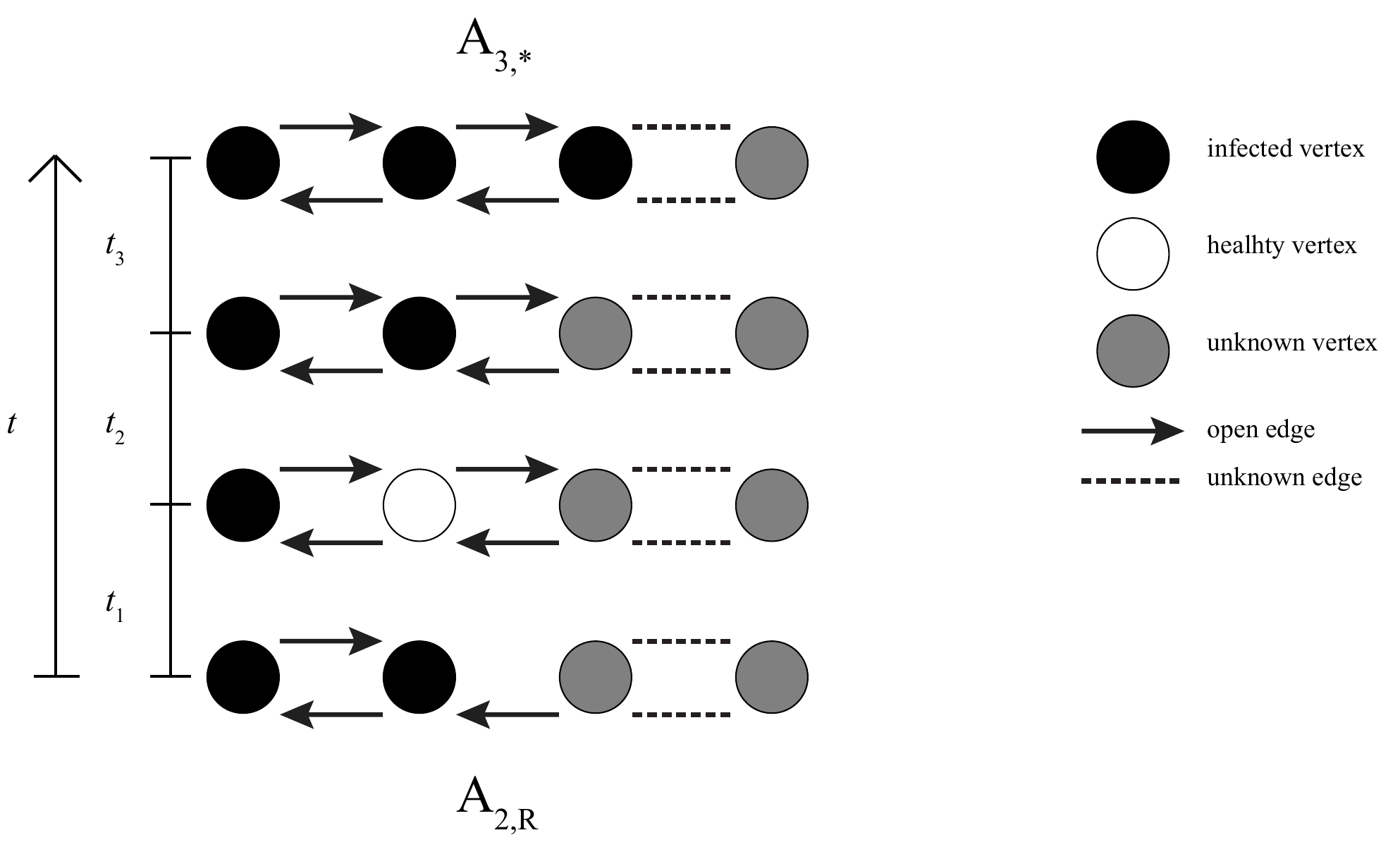}
\caption{A sequence of events leading from $\Aiir$ to $\Aiiis$}
\end{figure}

\item $\Aiir:$ Suppose the initial configuration is $\cX_0 \in \Aiir$. In this case we know vertex 2 must be infected since the edge (1,2) is inactive. We reach $\Aiiis$ by time $t = t_1 + t_2$ if vertex 2 recovers before time $t_1$, vertices 0 and 1 are infected when this recovery occurs, vertex 1 attempts to infect vertex 2 before time $t_1 + t_2$, and this infection occurs before either vertex 0 or vertex 1 recovers or the edge (1,2) becomes inactive. Define the random time $s_1 = \inf\{s > 0: \textrm{vertex 3 recovers}\}$. Letting $T_{3,*} = \inf\{s: \cX_s \in \Aiiis\}$, we have
\begin{equation}
\begin{aligned}
\probxo{T_{3,*} > t} & \le \prob{r(0;3) > t_1}\\
& \hspace{3mm}+ \prob{\textrm{vertex 1 or 2 is healthy when 3 recovers}|r(0;3) \leq t_1}\\
& \hspace{3mm} + \prob{I(s_1;2,3) > t_2} + \prob{I(s_1;2,3) > \min(r(s_1;1), r(s_1;2), b(s_1;2,3))}
\end{aligned}
\end{equation}
Choosing $\lam^{**} = 12(2 + \al)/p$, $t_1 = \log(6/p)$, and $t_2 = \frac{1}{\lam^{**}}\log(6/p)$ yields
\begin{equation}
\begin{aligned}
\probxo{T_{3,*} > t} &\le 3p/6 + \prob{\textrm{vertex 1 or 2 is healthy when 3 recovers}|r(0;3) \leq t_1}
\end{aligned}
\end{equation}
Now given a fixed $t_1$ vertices 0 and 1 are infected when vertex 2 recovers given $r(0;2) \leq t_1$ if for fixed $k_0$ and $k_1$ vertex 0 recovers at most $k_0$ times, vertex 1 recovers at most $k_1$ times, whenever vertex 0 recovers vertex 1 successfully reinfects vertex 0 before vertex 1 recovers or the egde (1,0) becomes inactive, and whenever vertex 1 recovers vertex 0 successfully reinfects vertex 1 before vertex 0 recovers or the edge (0,1) becomes inactive. The number of recoveries of vertex j where $j = 0,1$ in time $t_1$ are independent Poisson($t_1)$. Define the random times $v^0_0 = 0$, $v^0_k = \inf\{v > v^0_{k-1}:\textrm{vertex 0 recovers}\}$, $v^1_0 = 0$, and $v^1_k = \inf\{v > v^1_{k-1}:\textrm{vertex 1 recovers}\}$. Choose $k$ such that if $X \sim$ Poisson($t_1$) then $\prob{X > k} \leq p/6$, $k_1 = k_2 = k$, and $\lam \geq 12k(1 + \al)/p$. Then
\begin{equation}
\begin{aligned}
\prob{\textrm{0 or 1 is healthy when 2 recovers}}
&\leq 2 \prob{X > k} 
\\& \hspace{-2cm} + \sum_{m=1}^{k_1} \prob{I(v^0_{m-1};1,0) > \min(r(v^0_{m-1};1),b(v^0_{m-1};1,0))}\\
& \hspace{-2cm} + \sum_{n = 1}^{k_2} \prob{I(v^1_{n-1};0,1) > \min(r(v^2_{n-1};0),b(v^2_{n-1};0,1))}\\
& \leq 3p/6,
\end{aligned}
\end{equation}
and choosing $\lam^* = \max\{\lam^{**},12/(1+\al)/p\}$ and $t = t_1 + t_2$ finishes the proof in this case.

\end{enumerate}
Finally, if we choose the maximum values of $\lam_*$ and $t$ appearing in 1-3. above, then we simultaneously satisfy all three cases, completing the proof.
\end{proof}

\begin{lemma}\label{A3star}
Fix $\al > 0$ and $p \in (0,1).$ There exist $\lam^* = \lam(p,\al)$ and $t = t(p, \al)$ such that for any $\lam \geq \lam^*$, starting from any configuration in $\Aiiis$ on $\{0,1,2,3\} \times \{(i,j):i,j \in \{0,1,2,3\}\}$ at time 0, the probability of hitting  state $A_{4}$ by time $t$ is at least $1-p$.
\end{lemma}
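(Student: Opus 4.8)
The plan is to follow the template of Lemma~\ref{A2star}. By the symmetry of the dynamics it suffices to treat the three ``left-loaded'' classes $\Aiiio$, $\Aiiir$, $\Aiiil$ (their reflections across the middle edge being identical), and for each to exhibit an explicit finite sequence of Harris events, using only the clocks attached to the vertices $\{0,1,2,3\}$ and the edges among them, whose occurrence forces the configuration to enter $\Aiv$ before time $t$. Since such a sequence refers only to clocks internal to the block, the resulting lower bound on $\prob{\cX_s\in\Aiv\text{ for some }s\le t}$ holds uniformly in the behaviour on the external spacetime boundary, which is what the lemma requires.

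For $\Aiiio$ (the pattern $\bt\Lra\bt\Lra\bt\Lra?$): if vertex $3$ is infected we are already in $\Aiv$, and otherwise the single event ``vertex $3$ becomes infected before the edge $(2,3)$ is blocked and before any of vertices $0,1,2$ recovers'' puts us in $\Aiv$, because $(3,2)$ and the edges among $0,1,2$ stay active while those vertices remain infected; this gives $\probxo{T_4>t}\le e^{-\lambda t}+\tfrac{3+\alpha}{\lambda+3+\alpha}$ with $T_4=\inf\{s:\cX_s\in\Aiv\}$, handled as in case $\Aiio$ of Lemma~\ref{A2star}. For $\Aiiir$ (the pattern $\bt\Lra\bt\Lra\bt\br\bt$, so $(3,2)$ is blocked): the only way to reopen $(3,2)$ is for vertex $3$ to recover, so the good sequence is to wait for the first recovery of vertex $3$, at a time $s_1\le t_1$ --- which reopens $(3,2)$ --- and then, if vertices $0,1,2$ are infected at $s_1$, run the one-step race above on $[s_1,s_1+t_2]$ to reinfect vertex $3$. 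Thus $\probxo{T_4>t}$, $t=t_1+t_2$, splits into $\prob{r(0;3)>t_1}$, the probability that some of $0,1,2$ is healthy at $s_1$, and the two race terms; the middle term is bounded exactly by the nested device of case $\Aiil$ of Lemma~\ref{A2star}: each of vertices $0,1,2$ recovers a $\mathrm{Poisson}(t_1)$ number of times in $[0,t_1]$, and after each such recovery is reinfected through an internal edge ($(1,0)$ for $0$; $(0,1)$ or $(2,1)$ for $1$; $(1,2)$ for $2$) before that edge is blocked or the reinfecting neighbour recovers.

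The case $\Aiiil$ (the pattern $\bt\Lra\bt\Lra\bt\bl\,?$, so $(2,3)$ is blocked) has the new feature that $\Aiv$ cannot be reached unless vertex $2$ recovers at some point (that being the only way to reopen $(2,3)$); since vertex $2$ recovers at rate $1$ this is not a rare event, so it goes on the good path rather than in the bad set. The good sequence is: wait for the first recovery of vertex $2$, at $\sigma_1\le t_1$, which reopens $(2,3)$; then during $[\sigma_1,\sigma_1+t_2]$ reinfect vertex $2$ (from vertex $1$ through $(1,2)$, or from vertex $3$ through $(3,2)$ if $3$ is infected) before $(1,2)$ or $(3,2)$ is blocked or vertices $0,1$ recover, arriving at a time $\sigma_2\le\sigma_1+t_2$ in the state $\bt\Lra\bt\Lra\bt\Lra\circ$ (or in $\Aiv$ outright if vertex $3$ happens to be infected with both incident edges open); then run the one-step race of the $\Aiiio$ case on $[\sigma_2,\sigma_2+t_3]$. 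As before, vertices $0,1$ are kept infected by the $\mathrm{Poisson}$-count-plus-reinfection device. In every case $\probxo{T_4>t}$ reduces to a finite sum of ``wait for a rate-$1$ recovery'' terms $e^{-t_i}$, race terms of the shape $\tfrac{c(1+\alpha)}{\lambda+c(1+\alpha)}$, reinfection sub-terms, and a few terms of order $\alpha t_i$ (the chance an edge is blocked during a short transient); taking each $t_i$ to be either a fixed $\log(c/p)$ or $\tfrac{1}{\lambda^*}\log(c/p)$, the recovery-count cutoff $k$ so that $\mathrm{Poisson}(t_1)>k$ with probability $<p/c$, and $\lambda^*$ a large $(k,\alpha)$-dependent multiple of $1/p$, pushes the total below $p$. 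Taking $\lambda^*$ the maximum and $t$ the sum of the values obtained in the three cases finishes the proof.

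The step I expect to be the main obstacle is not a single inequality but the combinatorial bookkeeping caused by the edge-reactivation rule: a blocked internal edge reopens only when its tail recovers, a recovering vertex threatens the ``$k$ consecutive infecteds'' invariant, and while a vertex is transiently healthy any edge pointing into it can be blocked, which --- since reopening that edge requires yet another recovery --- raises the spectre of an infinite regress. The resolution is the standard one made quantitative: $\lambda\gg\alpha$ forces every reinfection to win its race against edge-blocking, so each extra coincidence (a second recovery before a reinfection, a block during a transient, \dots) contributes only an $O(1/\lambda)$ or $O(\alpha/\lambda)$ probability, and these are absorbed into the finitely many terms above. Verifying that, on the good event, no internal edge is left blocked at the hitting time --- and in particular that the forced recovery of vertex $2$ in case $\Aiiil$ does not strand the edge $(3,2)$ --- is the one place where genuine care is needed.
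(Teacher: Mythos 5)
Your proposal is correct and takes essentially the same approach as the paper, whose proof of this lemma is simply the remark that one repeats the case analysis of Lemma~\ref{A2star} for $\Aiiio$, $\Aiiil$, and $\Aiiir$ with larger constants. In fact you supply more detail than the paper does, and you correctly flag the one genuinely new bookkeeping point (that in the $\Aiiil$ case the forced recovery of vertex $2$ opens a window in which $(3,2)$ could be blocked, so that race must be added to the bad set).
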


\begin{proof}
The proof of this lemma follows the same arguments as the proof of the previous lemma by examining the cases in of starting in $\Aiiio, \Aiiil,$ and $\Aiiir$. Because there are more potential recoveries to consider the constants are larger, but nothing else changes.
\end{proof}

Lemmas \eqref{A2star} and \eqref{A3star} show that for fixed $\al > 0$, we can reach $\Aiv$ starting from any configuration in $\Aiis$ within time $\tau$ with arbitrarily large probability for an appropriate choice of $\lam$ and $\tau$. However, for the region $R_{k,\ell}$ to be good, we must be in state $A_4$ at time $\tau$. The following lemmas show that this event also has arbitrarily high probability for appropriately chosen $\lam$ and $\tau$.

\begin{lemma}\label{B3}
Let $\Biii$ consist of all states on $\{0,1,2,3\}$ and the associated edges where any three of $\{0,1,2,3\}$ are infected and all of $\{(i,j):i,j \in \{0,1,2,3\}\}$ are active. Fix $p \in (0,1), t > 0.$ Then there exists $\lam^* = \lam^*_{t,p,\al}$ such that for $\cX_0 \in A_4$ and all $\lam \geq \lam^*$, $$\probxo{\cX_s \cap R_{k,\ell} \in B_3 \cup A_4 \hspace{2mm} \forall s \leq t} \geq 1-p.$$
In words, the probability that starting from $A_4$ at time 0 the process $\cX_s$ in the region $R_{k,\ell}$ only visit states in $B_3$ or $A_4$ in the time interval $s \in [0,t]$ is at least $1-p$.
\end{lemma}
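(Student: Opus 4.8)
The plan is a competing-clocks argument over the successive excursions of the region $R_{k,\ell}$ into $\Biii$. First I would classify the ways the region-restricted process can exit $\Biii\cup\Aiv$. Restricting attention to the four vertices of $R_{k,\ell}$ and the edges among them, the only transition out of $\Aiv$ is a recovery, and since recoveries do not deactivate edges, the resulting state has three infected vertices and all six edges active, i.e.\ it lies in $\Biii$; hence one never leaves $\Biii\cup\Aiv$ starting from $\Aiv$. From a state in $\Biii$, let $h$ be the unique healthy vertex; the possible transitions are: (i) an infection of $h$ (from inside or outside $R_{k,\ell}$), which returns the region to $\Aiv$; (ii) a recovery of one of the three infected vertices, which produces a state with two infected vertices and so leaves $\Biii\cup\Aiv$; and (iii) an avoidance arrival on an in-edge of $h$ that lies in $R_{k,\ell}$, which deactivates that edge and so leaves $\Biii\cup\Aiv$. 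Any other arrival (an avoidance on an edge of $R_{k,\ell}$ not pointing into $h$, whose target is infected, or anything on an edge outside $R_{k,\ell}$) either does nothing or only alters the state outside $R_{k,\ell}$. Call transitions (ii)--(iii) \emph{bad}. Thus the process leaves $\Biii\cup\Aiv$ only via a bad transition occurring during an excursion of the region in $\Biii$, and throughout such an excursion there are exactly three infected vertices and $h$ has at most two in-edges inside $R_{k,\ell}$, so the total rate of bad transitions is at most $c:=3+2\al$, uniformly in whatever happens outside $R_{k,\ell}$.

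Next I would bound the number of excursions into $\Biii$ in $[0,t]$: each one begins with an $\Aiv\to\Biii$ transition, which consumes a recovery arrival at a vertex of $R_{k,\ell}$, and those recovery arrivals form a rate-$4$ Poisson process in the Harris construction that does not see the exterior of $R_{k,\ell}$; hence the number $N$ of excursions started in $[0,t]$ is at most the number of recovery arrivals in $[0,t]$, which is $\mathrm{Poisson}(4t)$-distributed, and I fix $K=K(t,p)$ with $\prob{\mathrm{Poisson}(4t)>K}\le p/2$. Then I would estimate the chance that a given excursion ends badly. At the (stopping) time an excursion into $\Biii$ begins, all edges of $R_{k,\ell}$ are active and $h$ has an in-neighbour $g$ inside $R_{k,\ell}$ that is infected; as long as no bad transition has yet occurred the region stays in $\Biii$, so $h$ is healthy, $g$ is infected and the edge $(g,h)$ is active, and a ring of the rate-$\lambda$ infection clock of $(g,h)$ would re-infect $h$ and return the region to $\Aiv$. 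By the strong Markov property this clock next rings an $\mathrm{Exp}(\lambda)$ time after the start of the excursion, independent of the past and of the recovery/avoidance clocks driving the bad transitions, whose superposition has total rate at most $c$, so its first ring after the start of the excursion stochastically dominates an $\mathrm{Exp}(c)$ time; since a bad transition before the end of the excursion forces the $(g,h)$-clock not to have rung yet, the excursion ends badly with conditional probability at most the chance that an $\mathrm{Exp}(c)$ clock beats an independent $\mathrm{Exp}(\lambda)$ clock, namely $c/(c+\lambda)$. Applying the strong Markov property at the successive starting times of the excursions, the probability that some one of the first $K$ excursions ends badly is at most $Kc/(c+\lambda)$.

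Assembling the pieces, for $\cX_0\in\Aiv$ we get
$$1-\probxo{\cX_s\cap R_{k,\ell}\in\Biii\cup\Aiv\ \forall s\le t}\ \le\ \prob{\mathrm{Poisson}(4t)>K}+\frac{Kc}{c+\lambda}\ \le\ \frac p2+\frac{Kc}{c+\lambda},$$
so it suffices to choose $\lambda^*=\lambda^*_{t,p,\al}$ large enough that $Kc/(c+\lambda^*)\le p/2$; then the stated bound holds for every $\lambda\ge\lambda^*$.

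The step I expect to require the most care is the per-excursion estimate, specifically making it genuinely uniform over the external spacetime boundary of $R_{k,\ell}$: the lower bound $\lambda$ on the re-infection rate must be extracted from a single in-edge of $h$ lying \emph{inside} $R_{k,\ell}$ (so the exterior cannot depress it), the bound $c$ on the bad-transition rate must use only internal recoveries and internal avoidances into $h$, and the dominating $\mathrm{Exp}(\lambda)$ and $\mathrm{Exp}(c)$ clocks must be arranged to be independent of each other and of the $\sigma$-field at the start of the excursion. One should also double-check the degenerate cases (e.g.\ $h$ an endpoint of the four-vertex path, infections of $h$ from outside) only help or are irrelevant to remaining in $\Biii\cup\Aiv$. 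Once this bookkeeping is in place, the counting of excursions and the assembly of the final bound are routine.
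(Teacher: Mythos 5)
Your proposal is correct and follows essentially the same route as the paper: bound the number of recovery events in $[0,t]$ by a Poisson tail (the paper does this per vertex with $\mathrm{Poisson}(t)$ and threshold chosen so each of the four vertices fails with probability at most $p/8$; you use the aggregate $\mathrm{Poisson}(4t)$ count), and then for each recovery/excursion run the same competing-clocks race between the internal rate-$\lambda$ re-infection clock and the bad clocks of total rate $3+2\alpha$, concluding with a union bound and taking $\lambda^*$ large. Your write-up is in fact somewhat more careful than the paper's about the classification of exits from $\Biii\cup\Aiv$ and the uniformity over the external spacetime boundary, but the underlying argument is the same.
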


\begin{proof}
For fixed $t$ and starting from $\Aiv$ at time 0, $\cX_s \cap R_{k,\ell}$ only ever visit states in $\Biii$ or $\Aiv$ for $s \in [0,t]$ if for a fixed $k$ each vertex recovers at most $k$ times, and whenever a vertex recovers it becomes reinfected before any of the other vertices recover or any of its edges become inactive. The number of recoveries of each vertex is Poisson($t$) independently. For each $i \in \{0,1,2,3\}$ define the random times $v_0^i = 0, v_k^i = \inf\{v > v^i_{k-1}: \textrm{vertex i recovers}\}$.  Choose $k$ such that if $X \sim$ Poisson($t$) then $\prob{X > k} \leq p/8$ and $\lam^* = 8k(3 + 2\al)/p$. Then for $j,h \in \{0,1,2,3\}$

\begin{equation}
\begin{aligned}
\probxo{\cX_s \cap R_{k,\ell} \in B_3 \cup A_4 \hspace{2mm} \forall s \leq t} & \leq 4\prob{X > k}\\
& \hspace{-3cm} + \sum_{j=0}^3 \sum_{m=1}^k \prob{\min_{|j-h|=1}\{I(v^j_{m-1};h,j)\} > \min \{\min_{h \neq j}\{r(v^j_{m-1};h)\}, \min_{|j-h|=1}\{b(v^j_{m-1};h,j)\}\}}\\
& \leq p
\end{aligned}
\end{equation}
\end{proof}


\begin{lemma}\label{A4}
Suppose that $\cX_0 \in \Aiv \cup \Biii$. Fix $p \in (0,1)$. Then there exist $t = t(p,\alpha)$ and $\lam^* = \lam^*(t,p,\al)$ such that $\probxo{\cX_t \in \Aiv} \geq 1-p$.
\end{lemma}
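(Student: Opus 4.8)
The plan is to prove Lemma~\ref{A4} with the same device used in Lemmas~\ref{A2star}--\ref{B3}: I will exhibit an event $E$ that depends only on the Harris clocks located inside $R_{k,\ell}$, that forces $\cX_t\cap R_{k,\ell}\in\Aiv$ whatever the process does on the spacetime boundary of $R_{k,\ell}$, and then I will bound $\prob{E^c}$. The boundary can influence the region only through the external neighbours of the endpoint vertices $0$ and $3$; an infected external neighbour can only help to (re)infect $0$ or $3$ and it cannot make any internal edge inactive (an internal edge $(a,b)$ can be blocked only while its internal endpoint $b$ is healthy, and the blocking uses the $\al$-clock of that internal edge), so the ``worst'' boundary is one whose external vertices stay healthy forever, and $E$ will be chosen so as to work even then.

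The only new point compared with Lemma~\ref{B3} is that we must land in $\Aiv$ at the single time $t$, not merely never leave $\Aiv\cup\Biii$. Fix $k\in\bZ_+$ and $\eta>0$ (to be chosen), and let $E$ be the intersection of the events: (i) each of the four vertices of $R_{k,\ell}$ has at most $k$ recovery arrivals in $[0,t]$; (ii) whenever a vertex $i$ has a recovery arrival at a time $v$ at which all three other vertices are infected and all internal edges incident to $i$ are active, the reinfection time $I^*(v):=\min_j I(v;j,i)$, the minimum over internal neighbours $j$ of $i$, satisfies $I^*(v)<\eta$ and $I^*(v)<\min\bigl(\text{next recovery arrival after }v,\ \min_j b(v;j,i)\bigr)$; and (iii) no recovery arrival falls in $[t-\eta,t]$. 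On $E$, when $\cX_0\in\Aiv$, one checks by induction over the finitely many (by (i)) recovery arrivals that $\cX_s\cap R_{k,\ell}\in\Aiv$ for all $s\in[0,t]$ except on a finite union of ``reinfection windows'': between windows all four vertices are infected, hence no vertex is healthy and no internal edge can be blocked, so the configuration is in $\Aiv$; at a recovery of $i$ the state enters $\Biii$ with $i$ the unique healthy vertex, and by (ii) vertex $i$ is reinfected after a gap $I^*(v)<\eta$ during which no other vertex recovers (so $i$ remains the only healthy vertex, and no edge with target other than $i$ can be blocked) and no edge with target $i$ is blocked, so the window closes back in $\Aiv$. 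Each window has length $<\eta$ and begins at a recovery arrival, so by (iii) every window begins before $t-\eta$ and therefore ends strictly before $t$; hence $\cX_t\cap R_{k,\ell}\in\Aiv$. If $\cX_0\in\Biii$, let $v^*$ be the unique healthy vertex; it has an infected internal neighbour joined to it by an active edge, and I add to $E$ the requirement that $v^*$ be reinfected within time $\eta$, before the first recovery arrival and before any edge with target $v^*$ is blocked. This brings the region into $\Aiv$ at some time $s^*<\eta$, and, provided $\eta<t/2$, the previous argument then applies on $[s^*,t]$.

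For the probability estimate I choose the parameters in order. Fix $t>0$ (the value is immaterial here; it will be pinned down when a good region is assembled from the preceding lemmas). Choose $k=k(t,p)$ with $4\,\prob{\mathrm{Poisson}(t)>k}<p/3$, then $\eta=\eta(p)<t/2$ with $4\eta<p/3$, which also controls (iii) since $\prob{\text{some recovery in }[t-\eta,t]}\le 4(1-e^{-\eta})\le 4\eta$. There are at most $4k+1$ events of type (ii) (including the extra one in the $\Biii$ case); by the strong Markov property applied at the stopping time $v$ of such a recovery, conditionally on its hypothesis the clock $I(v;j,i)$ is $\mathrm{Exp}(\lam)$ and races against at most three recovery clocks of rate $1$ and at most two blocking clocks of rate $\al$, so the event fails with probability at most $e^{-\lam\eta}+\frac{3+2\al}{\lam+3+2\al}$, exactly the estimate used repeatedly in Lemmas~\ref{A2star}--\ref{B3}. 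Thus $\prob{E^c}\le 4\,\prob{\mathrm{Poisson}(t)>k}+(4k+1)\bigl(e^{-\lam\eta}+\tfrac{3+2\al}{\lam+3+2\al}\bigr)+4\eta$, and picking $\lam^*=\lam^*(t,p,\al)$ so large that the middle term is $<p/3$ for all $\lam\ge\lam^*$ gives $\prob{E^c}<p$.

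The step I expect to be the real work is the induction in the second paragraph, i.e.\ making the ``in $\Aiv$ at exactly time $t$'' conclusion rigorous. The invariant that has to be maintained is that at most one vertex of $R_{k,\ell}$ is healthy at any instant --- this is what keeps every internal edge active between windows and forbids the blocking of edges with target other than the recovering vertex during a window --- and it is exactly the clause ``reinfection beats the next recovery'' in (ii) that secures it; pinning $t$ away from every (now length-$<\eta$) window is what clause (iii) is for. Everything else --- the Poisson tail bound, the exponential-race bounds, and the reduction to clocks inside $R_{k,\ell}$ --- is routine and mirrors the proofs already given.
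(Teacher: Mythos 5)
Your proof is correct, but it takes a genuinely different route from the paper's. The paper exploits the freedom to choose $t$ in the statement: it picks $t$ so small that with probability at least $1-p/3$ there are \emph{no} recovery arrivals at all among the four vertices in $[0,t]$ (bounding $\prob{\mathrm{Poisson}(4t)>0}$), after which the only event needed is the single reinfection race for the possibly healthy vertex in the $\Biii$ case; the whole proof is three terms in a union bound. You instead prove the stronger statement that the conclusion holds for an \emph{arbitrary} prescribed $t$, by allowing up to $k$ recoveries per vertex, repairing each one inside a window of length $<\eta$ via the same exponential-race estimate, and forbidding arrivals in $[t-\eta,t]$ so that no window straddles the terminal time. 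This essentially merges Lemma~\ref{B3} into Lemma~\ref{A4}: your invariant ``at most one healthy vertex, all internal edges active'' is exactly the content of staying in $\Biii\cup\Aiv$, and your clause (iii) is the one genuinely new ingredient needed to land in $\Aiv$ at the exact time $t$. What each buys: the paper's version is much shorter and suffices for the assembly of a good region because $t_*$ there is a free parameter appended after $\tau_*$; your version would let one dispense with the separate appeal to Lemma~\ref{B3} and take the block length to be $\tau_*$ itself, at the cost of redoing the bookkeeping of recovery arrivals (your parameter ordering $t\to k\to\eta\to\lambda^*$ is consistent with the dependence $\lambda^*=\lambda^*(t,p,\alpha)$ asserted in the statement, and your observation that the boundary can only help --- external arrows can only infect $0$ or $3$ and cannot block internal edges --- is the same reduction the paper relies on throughout Section~3).
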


\begin{proof}
$\cX_t \in \Aiv$ if none of the vertices attempt to recover before time $t$, a neighbor of the uninfected vertex (if $\cX_0 \in \Biii$ and there is one) attempts to infect it before time $t$, and this infection occurs before any edges of the uninfected vertex become inactive. The total number of attempted recovers of all 4 vertices follows a Poisson($4t$) distribution. Let $X \sim$ Poisson($4t$). If we call the (possibly) uninfected vertex $k$, then for $j \in \{1,2,3,4\}$
\begin{equation}
\begin{aligned}
\probxo{\cX_t \notin \Aiv} \leq  \prob{X > 0}
\\& \hspace{2mm} + \prob{\min_{|j-k| = 1} I(0;,j,k) > t}\\
& \hspace{2mm} + \prob{\min_{|j-k| = 1} I(0;j,k) > \min_{|j-k| = 1}b(0;j,k)}
\end{aligned}
\end{equation}
and choosing $t$ such that $\prob{X > 0} \leq p/3$ and $\lam^* = \max\{\lam^{**}, 3(2\al)/p\}$ where $\lam^{**}$ is chosen such that $\int_0^t \lam^{**}e^{-\lam^{**}u}du = 1 - p/3$ completes the proof.
\end{proof}

Taken together, lemmas \eqref{B3} and \eqref{A4} show that if we can reach state $A_4$ within time $\tau_*$, then for large enough $\lambda$ we will be in a state in $B_3 \cup A_4$ at time $\tau_*$ with high probability, and we can choose $t_*$ such that at time $\tau = \tau_* + t_*$ we are in state $A_4$ with high probability.

Lemmas \eqref{A2star}-\eqref{A4} identify a sequence of events that ensures a region $R_{k,\ell}$ is good regardless of what happens on the external spacetime boundary of $R_{k,\ell}$ and shows that for any $p \in (0,1)$ the probability of this sequence can made at least $p$ for an appropriate choice of $\lam$ and $\tau$. Thus, if we choose such $\lam$ and $\tau$, the good regions stochastically dominate an oriented site percolation with occupancy probability $p$ for each site independently. If we take $p = .9$, which is sufficient for the oriented percolation to be supercritical, and verify lemmas 3.1-3.4, we arrive at $\lampl \leq 142557 + 47519\al$.

\begin{remark}
If we only wanted to prove weak survival, then it would be sufficient to define overlapping regions of $3$ vertices and follow the infection in a single direction, applying the results of \cite{Liggett1997} to the resulting dependent percolation. Because the regions are now smaller, this would yield some improvement to the constants in the proof. However, it is not obvious how to obtain strong survival from this construction, and the constants will still be relatively large, so we do not feel the improvement is very meaningful.
\end{remark}

\section{Extension to $\Zn$}

We now consider the contact process with avoidance on $\Zn$. We can adapt the arguments developed in previous two sections and combine them with some known results about oriented percolation to show a phase transition on $\Zn$ in the following sense. For fixed $\al$ and starting from all vertices infected and all edges active, for sufficiently small $\lam$ the typical time to extinction is $O((\log n)^2)$ and for sufficiently large $\lam$ the typical time to extinction is at least $e^{\gamma n}$. We state this more precisely in the following theorem, from which Theorem~\ref{ring-thm} follows immediately.

\begin{theorem}
Fix $\al > 0$, and initial condition $\cX_0$ such that $x_0(i) = 1 \hspace{2mm} \forall i \in \boldV$ and $e_0(i,j) = 1 \hspace{2mm} \forall (i,j) \in \boldE$. Let $\tau = \inf\{t: |x_t| = 0\}$, the time to extinction. Then for $\lam < 1 + \al,$ $\prob{\tau > C (\log n)^2} \rightarrow 0$ as $n \rightarrow \infty$ for some $C > 0$ and for $\lam > 142557 + 47519\al$,  we have $\prob{\tau \leq Ce^{\gamma n}} \rightarrow 0$ for some $C, \gamma > 0$ as $n \rightarrow \infty$.
\end{theorem}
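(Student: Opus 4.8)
The plan is to treat the two claims separately: for $\lambda < 1+\alpha$ I would adapt the random-walk argument of Section~2, and for $\lambda > 142557+47519\alpha$ I would transcribe the block construction of Section~3 to the cylinder and invoke a standard survival estimate for oriented percolation. The non-monotonicity of the process plays out quite differently in the two halves.

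\textbf{Subcritical regime.} When $x_t$ is neither empty nor all of $\mathbb{Z}_n$ it is a disjoint union of maximal arcs of consecutive infected vertices, and each endpoint of such an arc is governed by precisely the embedded discrete-time chain of Lemma~\ref{RWbound}: for $\lambda<1+\alpha$ the probability that this endpoint moves so as to lengthen its arc is $\lambda/(\lambda+1+\alpha)<1/2$, so every endpoint is dominated by a random walk whose drift shrinks the arc, and an arc of length $\ell$ evolving with no infection entering through its ends dies within time $c_1\ell+s$ with probability at least $1-e^{-c_2 s}$. The point at which this departs from the $\mathbb{Z}$ argument is that, started from all infected, the initial ``arc'' is the whole cycle, and two opposing endpoints would take time $\Theta(n)$ to meet; the extra ingredient is fragmentation. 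I would show that whenever an interior vertex $v$ of an arc recovers there is probability bounded below by a constant $c_0=c_0(\lambda,\alpha)>0$ --- bounded below \emph{uniformly over} $\lambda<1+\alpha$, since $\alpha$ is fixed --- that the two directed edges into $v$ from its infected neighbors become blocked before either retransmits the infection, so that the resulting gap only widens thereafter; since interior recoveries of an arc of length $\ell$ occur at rate $\ell-1$, this forces arcs to break quickly, and one deduces that with probability tending to $1$ the infected set is, by some time $T_1=O(1)$, contained in a union of at most $n$ arcs each of length at most $C_3\log n$, none of which can thereafter be refed from outside. Applying the arc-lifetime tail estimate to each of these arcs and taking a union bound over them gives $\mathbb{P}(\tau>C\log n)\to 0$. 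Non-monotonicity is not an obstacle here: we follow the actual arc structure of the process and never compare it to a process started from a different configuration.

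\textbf{Supercritical regime.} Here I would reuse Section~3 essentially verbatim. Partition $\mathbb{Z}_n$ into the spacetime blocks $R_{k,\ell}$ of that section; since each of Lemmas~\ref{A2star}--\ref{A4} bounds the probability of the relevant internal event \emph{regardless of what happens on the block's spacetime boundary}, the same threshold $\lambda>142557+47519\alpha$ and timescale $\tau$ make each block good independently with probability at least $0.9$, now with $k$ running around a cycle of $\Theta(n)$ blocks per time level (the parity constraint ``$k+\ell$ even'' is accommodated by a harmless change to one block when $n$ has the wrong residue). Hence the good blocks dominate a supercritical oriented site percolation on a cylinder of circumference $\Theta(n)$, and starting from all vertices infected and all edges active every block at level $0$ is in a state in $\Aiis$, so goodness propagates exactly as on $\mathbb{Z}$. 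By standard estimates for supercritical oriented percolation on a cylinder --- the same ones used to prove exponential survival of the supercritical contact process on $\mathbb{Z}_n$ (see, e.g., \cite{Liggett1999}) --- such a percolation has a wet site at every level up to $e^{\gamma n}$ with probability tending to $1$; and since a good block ends in state $\Aiv$, which contains infected vertices, the CPA has an infected vertex at time $\ell\tau$ for every such level $\ell$. This yields $\mathbb{P}(\tau\le Ce^{\gamma n})\to 0$.

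\textbf{Principal obstacle.} I expect the difficulty to lie essentially entirely in the subcritical fragmentation step. On $\mathbb{Z}$ one simply waits for two opposing fronts to meet, but on $\mathbb{Z}_n$ that takes $\Theta(n)$ time, so one must instead prove that the cycle breaks into arcs of length $O(\log n)$ within $O(1)$ time; the ``a recovery opens a gap that never closes, with probability $\ge c_0$'' estimate has to be established uniformly over the full range $\lambda<1+\alpha$ (where $\lambda$ may be large), and then upgraded, via a dependency/blocking argument on the Harris construction, into the no-long-arc statement, all without recourse to monotonicity. By contrast the supercritical bound should be a routine adaptation of Section~3 together with a citation.
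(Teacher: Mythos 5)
Your proposal follows essentially the same route as the paper: the subcritical half is the paper's own argument (a recovery becomes a permanent gap with probability $c_0=c_0(\lam,\al)>0$ by the random-walk domination of Lemma~\ref{RWbound}, enough such persistent gaps accumulate quickly to fragment the cycle into arcs of length $O(\log n)$, and each such arc then dies in a further $O(\log n)$ time), and the supercritical half is the paper's comparison with oriented site percolation built from the boundary-independent goodness bounds of Section~3. The only substantive differences are that you cite exponential survival of supercritical oriented percolation on a cylinder as standard where the paper proves it explicitly via a halving-and-restart argument combining Durrett's survival bound for large initial sets with the Durrett--Schonmann/Tzioufas density estimate, and that your claim that a gap ``only widens'' once both inbound edges are blocked is too strong (a blocked edge reactivates when its infected endpoint recovers), though the random-walk domination you already invoke supplies the persistence probability $c_0$ that the argument actually needs.
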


\subsection{Subcritical proof: $\lambda<1+\alpha$}

The proof of the subcritical regime on $\boldZ_n$ is again notably complicated by the fact that the CPA is not an attractive particle system. The basic strategy is to break $\boldZ_n$ into regions and establish that with high probability many regions will be cleared of the infection quickly and stay clear of the infection for a long time. However, as was the case on $\boldZ$, the dependence among regions is complex and substantial work is needed to establish bounding processes that allow us to treat the regions as if they were independent.

Consider the CPA on $\boldZ_n$ starting from all vertices infected and suppose $\lam < 1+\al$. We divide $\boldZ_n$ into regions as follows. Let $C^*>0$ be a constant to be chosen later. The $i$th region consists of the vertices $\{3 C^* (i-1) \log n, \ldots , 3 C^* i \log n - 1\}$, all edges among these vertices, and in addition the edges $(3 C^* (i-1) \log n - 1, 3 C^* (i-1) \log n)$ and $(3 C^* i \log n, 3 C^* i \log n - 1)$. Note that each region contains $3 C^* \log n$ consecutive vertices. Of course the $C^*\log n$ term will generally not be an integer, and our convention will be to interpret this as the floor $\lfloor C^* \log n\rfloor$, but as rounding will not affect the estimates below, we omit the floor from our notation.  Any vertices left over after dividing $n$ by $3 C^* \log n$ will not be part of any region, and these extra vertices will number at most $3 C^* \log n$. We will further divide the region $i$ into left, center, and right subregions consisting of $C^* \log n$ vertices each, so the left subregion of region $i$ contains the vertices $\{3 C^* (i-1) \log n, \ldots ,3 (i-1) C^* \log n + C^* \log n - 1\}$, the center subregion contains vertices $\{3 C^* (i-1) \log n + C^* \log n, \ldots , 3 C^* (i-1) \log n + 2 C^* \log n - 1\}$, and the right subregion contains vertices $\{3 C^* (i-1) \log n + 2 C^* \log n, \ldots, 3i C^* \log n - 1\}$.

Let the spacetime region $R_i$ consist of the vertices and edges of the $i$th region through time $C \log n$, where $C$ will be chosen later depending on $\alpha$, $\lambda$, {and $C^*$}. We will call region $R_i$ \textbf{broken} if it contains no infected vertices in the center subregion at time $C \log n$. 

Our first step is to show that starting from all vertices infected, the indicators of the events that the $i$th region is broken for $1\le i < n/ (3C^*\log n)$ stochastically dominate a collection of independent Bernoulli($p$) random variables for some $p>0$ (not depending on $n$). To this end, we let $\mathcal{G}_i$ denote the $\sigma$-field generated by the graphical construction (times of infection arrows, recovery dots, and avoidance crosses) \textit{outside} of $R_i$ during the time interval $[0, C\log n]$. We will show that $\mathbb{P}(R_i \text{ is broken}\ |\ \mathcal{G}_i) \ge p > 0$ for every $i$. In what follows we refer to the \textbf{graphical construction restricted to $R_i$}, by which we mean the graphical construction with only those symbols (infection arrows, recovery dots, and avoidance crosses) that remain after erasing all symbols outside of $R_i$. 

We define the \textbf{gap-edge process} (analogous to the process described in Section~\ref{sec:Z lower bd}) in $R_i$ as follows. Suppose the middle vertex of $R_i$ (a designated vertex closest to the midpoint of the region) is healthy at time $1$. Define the left gap-edge process, $l_g(t)\le 0$, to track the displacement from the middle vertex to its closest infected neighbor to the left in $R_i$ at time $t \ge 1$, with the conventions that $l_g(t) = -\infty$ if there are no such infected neighbors in $R_i$ and $l_g(t)$ ``hits'' $0$ if the middle vertex of $R_i$ is reinfected from the left. The states $-\infty$ and $0$ act as absorbing states for $l_g(t)$. Likewise, define the right gap-edge process, $r_g(t)$, to track the displacement from the middle vertex to its closest infected neighbor to the right in $R_i$ at time $t \ge 1$, with the convention that $r_g(t) = \infty$ if no such vertex exists.

We next define the following events.

\begin{enumerate}
\item The middle vertex of the center subregion of $R_i$ is healthy at time $1$. Call this event $\mathcal{A}^i_1$.
\item If $\mathcal{A}^i_1$ occurs, at time $1$, consider the left and right gap-edge processes around the middle vertex during time $[1, C \log n]$. These edge processes both leave the region $R_i$ before infecting the middle vertex (hitting $0$) and before time $C \log n$. Call this event $\mathcal{A}^i_2$.

\item Let $0\le t_1<t_2<\cdots< t_k \le C\log n$ be all the times at which there are infection arrows along the leftmost edge in $R_i$, $(3 C^* (i-1) \log n - 1, 3 C^* (i-1) \log n)$, up to time $C\log n$. We say that an uninterrupted path of infection exists in the left subregion of $R_i$ during time $[0,C \log n]$ if there exists $j\le k$ such that, starting at time $t_j$ with all edges active and all vertices healthy except for a single infection at $3 C^* (i-1) \log n$, in the graphical construction restricted to $R_i$ the vertex $3C^*(i-1)\log n + C^*\log n$ gets infected by time $C\log n$. Call the complementary event (that no uninterrupted path of infection exists) $\mathcal{A}^i_3$.

\item Analogously define an uninterrupted path of infection in the right subregion, and let $\mathcal{A}^i_4$ be the event that no such path exists.
\end{enumerate}

If $\mathcal{A}^i_1\cap \mathcal{A}^i_2$ occurs, the left and right gap-edge processes will leave region $R_i$ before reinfecting the middle vertex at random times $\tau_l, \tau_r \leq C \log n$. If $\tau_l\le \tau_r$, then at time $\tau_l$ all vertices are healthy and all edges (with possible exception of the leftmost edge) are active to the left of the middle vertex in $R_i$. Therefore, if $\mathcal{A}^i_3$ also occurs, the vertex $3C^*(i-1)\log n + C^*\log n$ cannot be reinfected during time interval $[\tau_l,\tau_r]$, and at time $\tau_r$ all vertices in the middle and right subregions of $R_i$ are healthy and all edges (except possibly the rightmost edge) are active. Therefore, if $\mathcal{A}^i_4$ also occurs, then no vertices in the middle subregion of $R_i$ will be infected at time $C\log n$. The case $\tau_l>\tau_r$ is similar, and we have
\begin{equation}\label{eq:A implies broken}
\cap_{j=1}^4 \mathcal{A}^i_j \subset \{R_i\text{ is broken}\}.
\end{equation}
We next estimate the conditional probability of the event $\cap_{j=1}^4 \mathcal{A}^i_j$ given $\mathcal{G}_i$.

For $\mathcal{A}_1^i$ to occur, it is sufficient for the middle vertex to recover at some time in $[0,1]$ and have no incoming infection arrows during $[0,1]$. These events are independent of $\mathcal{G}_i$, so
\begin{equation}\label{eq:A_1}
\mathbb{P}(\mathcal{A}_1^i\ |\ \mathcal{G}_i) \ge e^{-2\lambda}(1-e^{-1}).
\end{equation}

Next we bound from below the probability of $\mathcal{A}_2^i$ given $\mathcal{G}_i\cap \mathcal{A}_1^i$. On the event $\mathcal{A}_1^i$, consider the left gap-edge process, $l_g(t)$. Observe that $l_g(t)$ increases (by 1) only if the rightmost infected vertex to the left of the middle infects its neighbor to the right before either recovering or being avoided by its neighbor to the right (deactivation of the edge); otherwise $l_g(t)$ will decrease by at least 1. Note that when $l_g(t)$ jumps to the left, the size of the jump may depend on $\mathcal{G}_i$, but until $l_g(t)\in \{-\infty,0\}$ and conditional on $\mathcal{G}_i$, we have that $l_g(t)$ takes steps to the right with probability at most $\frac{\lambda}{\lambda+1+\alpha}<\frac{1}{2}$ and otherwise steps left. Therefore, the sequence of locations of $l_g(t)$ after successive jumps (the embedded discrete-time ``chain'', which is not Markov) is stochastically dominated by a simple random walk that steps left with probability 
$$
\xi = \frac{1+\alpha}{1+\alpha+\lambda}>1/2,
$$
right with probability $1-\xi$, and starts at $-1$. (One can explicitly couple these stochastic processes, conditional on $\mathcal{G}_i$, until $l_g(t)$ hits either $-\infty$ or $0$, after which the random walk process can be extended independently for all time.)

By a standard Gambler's Ruin analysis, the dominating random walk process never returns to $0$ with probability $1-\frac{\lambda}{1+\alpha}>0$. Moreover, the location of the random walk after $K=\frac{3}{2 \xi - 1}C^*\log n$ steps is dominated by $K-2X$ where $X\sim \text{Bin}(K,\xi)$. Since $E(K-2X) = K(1-2\xi) = -3C^*\log n$ and $\text{Var}(K-2X) = 4K\xi(1-\xi) \le \frac{3}{2\xi-1}C^* \log n$, Chebychev's inequality implies that the probability that the random walk has not crossed $-2C^*\log n$ after $K$ steps is at most $\frac{3}{(2\xi - 1)C^* \log n}$. Next, observe that $l_g(t)$ makes jumps at least as frequently as the arrivals of recovery dots, and by independently generating recovery dots at rate $1$ after $l_g(t)$ is absorbed (to emulate additional jumps), the number of jumps made by $l_g(t)$ by time $C\log n$ stochastically dominates Poisson$(C\log n)$. Therefore, letting 
$$
C =\frac{6}{2\xi-1}C^*,
$$
the number of jumps made by time $C\log n$ exceeds $K$ with probability at least $1-\frac{2(2\xi-1)}{3C^*\log n}$ by Chebychev's inequality. Thus, $l_g(t)$ hits $-\infty$ by time $C\log n$ with probability at least $1 -  \frac{\lambda}{1+\alpha} - \frac{3}{(2\xi - 1)C^* \log n} - \frac{2(2\xi-1)}{3C^*\log n}$. An analogous argument shows $r_g(t)$ hits $\infty$ by time $C\log n$ with at least the same probability, and does so independent (conditional on $\mathcal{G}_i$) of $l_g(t)$. We conclude that for large $n$,
\begin{equation}\label{eq:A_2}
\mathbb{P}(A_2^i | \mathcal{G}_i\cap A_1^i) \ge \left(1 -  \frac{\lambda}{2(1+\alpha)}\right)^2.
\end{equation}

To see that $\mathcal{A}_3^i$ occurs with high probability first observe that $k$, the number of  infection arrows along $(3 C^* (i-1) \log n - 1, 3 C^* (i-1) \log n)$, which is the leftmost edge into $R_i$, during time interval $[0,C\log n]$ is Poisson($\lambda C\log n$) distributed, and therefore $k\le 2\lambda C \log n$ with probability at least $1-\frac{1}{\lambda C\log n}$. For each $1\le j\le 2\lambda C \log n$, at the time $t_j$ of the $j$th infection arrow along the leftmost edge into $R_i$, we begin tracking an infection process in the graphical construction restricted to $R_i$ (so this process is independent of $\mathcal{G}_i$) and with only $3 C^* (i-1) \log n$, which is the leftmost vertex in $R_i$, infected at time $t_j$. (Note that for different $j$, these processes may ``overlap" in their use of the symbols in the graphical construction in $R_i$, but this dependence will not matter. Also note that if $t_j>C\log n$, then this process will do nothing, as we are ignoring symbols outside of the space-time region $R_i$.)

Consider the $j$th such process, started at time $t_j$. As in the previous argument, the sequence of locations of the rightmost infected vertex in $R_i$ after each jump in its location is stochastically dominated by a simple random walk that moves left by $1$ with probability $\xi>1/2$ and right by $1$ with probability $1-\xi$. By the Gambler's Ruin, it follows that this random walk, when started from $3 C^* (i-1) \log n$, hits $3 C^*(i-1)\log n+C^*\log n$ before hitting $3 C^* (i-1) \log n - 1$ with probability at most 
$$
\left(\frac{\lambda}{1+\alpha}\right)^{C^*\log n} = n^{-C^* \log((1+\alpha)/\lambda)}.
$$
If the random walk hits $3 C^* (i-1) \log n - 1$ before hitting $3 C^* (i-1) \log n+C^*\log n$, then the infection processes started at time $t_j$ never reaches the middle subregion of $R_i$. Therefore, the probability that any of the first $2\lambda C\log n$ such infection attempts ever reaches the middle subregion of $R_i$ is at most
$$
2\lambda C(\log n) n^{-C^* \log((1+\alpha)/\lambda)},
$$
so for large $n$,
\begin{equation}\label{eq:A_3}
\mathbb{P}(\mathcal{A}_3^i\ |\ \mathcal{G}_i) \ge 1- \frac{2}{\lambda C\log n}.
\end{equation}
The same (symmetric) argument implies
\begin{equation}\label{eq:A_4}
\mathbb{P}(\mathcal{A}_4^i\ |\ \mathcal{G}_i) \ge 1- \frac{2}{\lambda C\log n}.
\end{equation}
Combining equations~\eqref{eq:A implies broken}--\eqref{eq:A_4}, we have for all large $n$,
\begin{equation}\label{eq:broken bd}
\mathbb{P}(R_i \text{ is broken}\ |\ \mathcal{G}_i) \ge \frac{1}{2}e^{-2\lambda}(1-e^{-1})\left(1 -  \frac{\lambda}{2(1+\alpha)}\right)^2 =: p.
\end{equation}

Now, let $i_1< \cdots < i_M$ be all of the random indices such that $R_{i_\ell}$ is broken for $1\le\ell\le M$, and let $v_1, \ldots, v_M$ be the middle vertices (in the middle subregions) of each $R_{i_\ell}$. By \eqref{eq:broken bd}, the collection of (indicators of) broken regions dominates a collection of independent Bernoulli$(p)$ random variables, of which there are fewer than $n$. The longest run of $0$'s in fewer than $n$ independent Bernoulli$(p)$ trials arranged in a cycle is smaller than $\frac{4}{\log((1-p)^{-1})}\log n$ with probability at least $1-n^{-1}$. So with probability at least $1-n^{-1}$ we have
\begin{equation}\label{gap_size}
\text{dist}(v_1,v_M) \vee \max_{\ell} \text{dist}(v_{\ell}, v_{\ell+1}) \le \frac{5}{\log((1-p)^{-1})}\log n \cdot (3 C^* \log n),
\end{equation}
where $\text{dist}$ is the shortest path distance on $\bZ_n$, and the $5$ in the numerator accounts for vertices that are not in any region (between $v_M$ and $v_1$).

We now sketch an argument, which is very similar to the arguments above, to show that for a large enough $C'>0$, during the time interval $[C\log n, C\log n + C'(\log n)^2]$, each interval of vertices between $v_\ell$ and  $v_{\ell+1}$ will fully recover without ever interacting with neighboring intervals. This will complete the proof of the subcritical result.

For $\ell = 1,\ldots M-1$, in the interval between $v_\ell$ and $v_{\ell+1}$ (and between $v_M$ and $v_1$) there are no infected vertices within $\frac12 C^*\log n$ of $v_\ell$ or $v_{\ell+1}$ at time $C\log n$. Starting at time $C\log n$,  the sequence of distances from $v_\ell$ to the leftmost infected vertex in $[v_\ell, v_{\ell+1}]$ after each jump in its location dominates a random walk that moves to the right with probability $\xi>1/2$ (with the convention that the distance is $\infty$ if there are no infected vertices in the interval, and we ignore infections coming from outside the interval, but as we will see, there are none). Choosing
$$
C^* = \frac{4}{\log((1+\alpha)/\lambda)},
$$
it follows that the probability that the leftmost infected vertex in this interval ever reaches $v_\ell$ is at most
$$
\left(\frac{\lambda}{1+\alpha}\right)^{(C^*/2)\log n} = n^{-2}.
$$
Likewise, the probability that the rightmost infected vertex in this interval ever reaches $v_{\ell+1}$ is at most $n^{-2}$, so the probability that there exists an $\ell$ such that $v_\ell$ is ever reinfected is at most $2n^{-1}$ (and on the complementary event, we are justified in ignoring potential infections between neighboring intervals).

The leftmost infected vertex between $v_\ell$ and $v_{\ell+1}$ attempts to jump at rate at least 1 (extending the `jump' process, as before, beyond the first time that either there are no infected vertices in the interval or $v_\ell$ gets infected), so the number of jumps during the time interval $[C\log n,C\log n + C'(\log n)^2]$ dominates a Poisson$(C'(\log n)^2)$ distribution. Therefore, by a standard lower tail estimate for the Poisson distribution, the probability that the number of attempted (potential) jumps by the leftmost infected vertex is less than $\frac12 C'(\log n)^2$ is at most $e^{-(\log n^2)} \le n^{-2}$ for $C'$ and $n$ sufficiently large. After $K = \frac12 C'(\log n)^2$ jumps, the displacement of a simple random walk that moves right with probability $\xi$ is $2Y-K$ where $Y\sim $ Bin$(K, \xi)$, and by a Chernoff bound, $2Y-K$ exceeds $\frac12(2\xi -1)K = \frac14(2\xi-1) C'(\log n)^2$ with probability at least $1-n^{-2}$. The displacement of the leftmost infected vertex after $K$ jumps exceeds this, so on the event in \eqref{gap_size}, if $C'\ge \frac{60C^*}{(2\xi-1)\log((1-p)^{-1})}$, then the leftmost infected vertex will exceed $v_{\ell+1}$, which implies the interval between $v_\ell$ and $v_{\ell+1}$ is fully recovered. Finally, by a union bound, the probability that one of the $M$ intervals contains an infected vertex at time $C\log n + C'(\log n)^2$ is at most $Mn^{-2}\to 0$ as $n\to\infty$. \qed

\subsection{Supercritical proof (large $\lambda$)}
To prove the upper bound, we will use our result from section 3 that the contact process with avoidance can stochastically dominate an oriented site percolation with probability of occupancy $p$ for any chosen $p < 1$ so long as $\lambda$ is chosen to be sufficiently large, along with some facts about oriented percolation. We begin by briefly describing the models, introducing some notation, and stating some results about oriented percolation that we need.

Oriented percolation is defined on the sites $\{(x,t) \in \boldZ \times \mathbb{N}: x = (t \mod 2) \mod 2\}$ where $(x,t)$ and $(y,s)$ are neighbors when $|x-y| = 1$ and $|t-s| = 1$. x can be thought of as space and t as time. In site percolation each site is either occupied with probability $p_s$ or unoccupied with  probability $1-p_s$ independently, and two sites are connected if they are neighbors and both are occupied. In bond percolation, bonds between neighboring sites are active with probability $p_b$ and inactive with probability $1-p_b$ independently, and two sites are connected if there is an active bond between them. Define $S_n^A = \{(x,t); t=n \textrm{ and there is a connected path from some }(y,s) \in A \textrm{ to } (x,t)\}$ for site percolation and $B_n^A$ analogously for bond percolation.  We write $\{A \rightarrow \infty\}$ to mean that there is an infinite oriented path starting from the set $A$. We denote the critical value for site percolation $p_s(c)$ where $p_s(c)$ is the unique value such that $\prob{\{A \rightarrow \infty \}} > 0$ if and only if $p_s > p_s(c)$ for site percolation and define $p_b(c)$ in the same way for bond percolation. We now state some results.

\begin{prop}
For any $A$, $p_b$, for all $p_s \geq p_b(2-p_b)$, $B_n^A \underset{stoch.}{\subset} S_n^A$ for every $n$. 
\end{prop}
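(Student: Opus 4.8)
The plan is to couple the bond percolation process to the site percolation process on the same probability space so that occupancy of a site dominates the joint activity of the bonds feeding into it. Concretely, I would construct the site percolation from the bond percolation as follows: declare the site $(x,t)$ with $t \ge 1$ to be "occupied" if and only if at least one of the (at most two) oriented bonds entering $(x,t)$ from level $t-1$ is active; for the sites at level $0$, simply copy the occupancy status from $A$ (or declare them all occupied, whichever matches the convention for $S_n^A$ and $B_n^A$). The intuition is that a connected bond path in the bond model uses, at each step from level $s$ to level $s+1$, an active bond entering the site it arrives at; that arrival site is then occupied in the site model by construction, so the same sequence of sites is a connected path in the site model. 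Hence $B_n^A \subset S_n^A$ deterministically under this coupling, for every $n$ simultaneously.

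The key steps, in order, are: (1) fix the bond configuration $\omega \in \{0,1\}^{\text{bonds}}$ and define the derived site configuration $\eta(\omega)$ by the rule above; (2) check that $\eta(\omega)$ is a legitimate site configuration and that, pointwise in $\omega$, every bond-connected path from $A$ to level $n$ maps to a site-connected path from $A$ to level $n$, giving $B_n^A(\omega) \subseteq S_n^A(\eta(\omega))$; (3) compute the marginal law of $\eta(\omega)$ when $\omega$ is i.i.d.\ Bernoulli$(p_b)$ — a site $(x,t)$ with $t\ge 1$ that has two incoming bonds is occupied with probability $1-(1-p_b)^2 = p_b(2-p_b)$, and these events are independent across sites because distinct sites have disjoint sets of incoming bonds (each bond enters exactly one site); (4) invoke the standard monotonicity of oriented percolation in the occupancy parameter: since $S_n^A$ is an increasing function of the site configuration, raising the occupancy probability from $p_b(2-p_b)$ up to any $p_s \ge p_b(2-p_b)$ only enlarges $S_n^A$ stochastically, so $B_n^A \subset_{stoch.} S_n^A$ under product-$p_s$ site percolation for all such $p_s$.

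The one point that needs care — and the main (mild) obstacle — is step (3): one must verify that the incoming-bond sets of distinct sites are genuinely disjoint so that the derived site variables are independent, and one must handle boundary sites (those with only one incoming bond, or the level-$0$ sites) consistently with whatever definition of $S_n^A$, $B_n^A$ is in force. A site with a single incoming bond would be occupied only with probability $p_b \le p_b(2-p_b)$, which is fine since we only need a \emph{lower} bound on occupancy probability; but to get a clean product measure one typically either works on the full lattice where every relevant site has two incoming bonds, or adds independent extra randomness to bump the under-covered boundary sites up to probability $p_b(2-p_b)$. Either fix is routine. Once the coupling and the independence bookkeeping are in place, step (4) is immediate from the FKG/monotonicity of increasing events under product measures, and the proposition follows.
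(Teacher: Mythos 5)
Your coupling (occupy a site iff at least one of its two incoming oriented bonds is active, note the incoming-bond sets of distinct sites are disjoint so the derived site field is product Bernoulli$(1-(1-p_b)^2)$, then use monotonicity in $p_s$) is correct and is precisely the ``straightforward coupling argument'' the paper invokes and defers to Liggett (1999) for. No substantive difference in approach.
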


This can be easily shown by a straightforward coupling argument. See Liggett (1999) \cite{Liggett1999} for details.

\begin{prop}[\cite{percolation}]
For any set $D$, $\prob{\{D \rightarrow \infty\}^c} \leq Ce^{-\gamma |D|}$ for some constants $C, \gamma > 0$.
\end{prop}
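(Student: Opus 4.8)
This is the standard exponential estimate for supercritical two‑dimensional oriented percolation; one legitimate option is simply to quote it from Durrett's paper, but here is how I would reprove it for the case that is actually used, namely $D$ an interval --- in the application to $\Zn$, a full row of blocks around the cycle. The plan has two stages: first reduce to an oriented percolation whose density is arbitrarily close to $1$, then run a Peierls‑type contour argument.

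\emph{Reduction to high density.} Section 3 already shows that, for $\lambda$ large, the ``good region'' indicators stochastically dominate an independent oriented site percolation of any prescribed density $p<1$; since goodness of $R_{k,\ell}$ was established using only the Harris construction inside $R_{k,\ell}$, the good‑block field is at worst finite‑range dependent, and honest independence follows from the Liggett--Schonmann--Stacey comparison. Thus it suffices to prove the claim for independent oriented site percolation at density $p=1-\epsilon$ with $\epsilon>0$ as small as we wish. By the preceding proposition (domination of oriented bond by oriented site percolation) this site process dominates an independent oriented bond percolation of density $1-\sqrt\epsilon$, an infinite oriented path from the image of $D$ downstairs still yields $\{D\to\infty\}$ upstairs, and the image of an interval is again an interval of length $m\asymp|D|$. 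So I may assume I am working with oriented bond percolation at density $1-\delta$, $\delta=\sqrt\epsilon$ small, and $D=\{0,1,\dots,m\}$ a row.

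\emph{Contour argument.} On $\{D\to\infty\}^c$ the open cluster of $D$ is finite, and by the usual planar duality for oriented bond percolation this is equivalent to the existence of a path of \emph{closed} bonds --- equivalently an open path in the dual, in which each bond is present with probability $\delta$ --- that separates the row $D$ from $t=\infty$: on $\Zn\times\mathbb{N}$, a closed dual loop winding once around the cylinder; on $\boldZ\times\mathbb{N}$, a closed dual arc running from the left of $D$ over the top to its right. Either way such a separating path must cross all $m+1$ columns met by $D$, so its length is at least $m$. Letting $N_\ell$ bound the number of candidate separating paths of length $\ell$ through a fixed anchor, one has $N_\ell\le c_0^{\,\ell}$ for a connective‑type constant $c_0$, and each candidate is realized (all its bonds closed) with probability $\delta^{\ell}$, so a union bound gives
\[
\prob{\{D\to\infty\}^c}\ \le\ (m+1)\sum_{\ell\ge m} c_0^{\,\ell}\delta^{\ell}\ \le\ \frac{(m+1)(c_0\delta)^m}{1-c_0\delta}\ \le\ C e^{-\gamma|D|},
\]
having chosen $\delta$ (hence $\epsilon$) with $c_0\delta<\tfrac12$, which the reduction permits.

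The substantive content is entirely in the reduction: the block construction of Section 3 chained with the Liggett--Schonmann--Stacey comparison is what forces the per‑bond failure probability $\delta$ to be genuinely \emph{small} rather than merely positive, and without that the contour sum does not converge (it needs $c_0\delta<1$), so an argument run at a density just above $p_c$ would not close. I expect the only real care to be in bookkeeping which events live in which part of the Harris/percolation construction when chaining the dominations. The contour step itself is routine once the correct oriented dual is set up so that finiteness of the cluster of $D$ reads off as a closed dual path of length $\gtrsim|D|$ separating $D$ from infinity; this is precisely the planar/cylinder duality used in Durrett 1984, which I would cite rather than reprove.
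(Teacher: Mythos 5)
The paper offers no proof of this proposition at all --- it is imported as a black box from Durrett (1984) --- and your two-stage plan (renormalize to density near $1$, then Peierls) is indeed the skeleton of the standard proof, so the strategy is the right one. However, your contour step asserts something that is false for \emph{oriented} percolation: the dual path separating a finite wetted cluster from infinity is not a path all of whose segments correspond to closed primal bonds. Because bonds only point upward, parts of the separating contour are forced by orientation alone and say nothing about bond states; what the duality actually delivers (and what Durrett proves) is that a contour of length $\ell$ contains at least a fixed positive fraction of closed bonds, so the per-contour probability is $\delta^{c\ell}$ with $c<1$, not $\delta^{\ell}$. The Peierls sum still closes for $\delta$ small (the condition becomes $c_0\delta^{c}<1$), and this correction is exactly why the bare contour argument needs $p$ very close to $1$ and the renormalization stage is indispensable --- but as written your bound computes the probability of the wrong event.

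The more substantive gap is your restriction to intervals. The proposition is stated for an arbitrary set $D$, and the paper uses it that way: in the exponential-survival argument on $\Zn$, each new ``cycle'' restarts the percolation from the set of occupied sites in a half-cycle, which is merely some set of at least $\epsilon n$ sites with no interval structure. For such a $D$ the finite cluster may be enclosed by several disjoint contours, so the single-anchor union bound over paths of length at least $m$ does not apply. The repair is standard --- the horizontal spans of the enclosing contours must jointly cover the (distinct) sites of $D$, so their total length is of order at least $|D|$, and one sums over multi-contour decompositions --- but it is a step you need both to prove the statement as quoted and to justify the paper's iteration. You should also say explicitly that by working only at density near $1$ you are proving a weaker statement than Durrett's (which holds for all $p>p_c$); that weaker version happens to suffice for every application in this paper, since $\lambda$ is taken large enough to make the block density as close to $1$ as desired, but the reduction deserves to be flagged rather than left implicit.
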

In words, the probability that an oriented site percolation dies is exponentially small in the size of the starting set.


\begin{prop}[Tzioufas 2014 \cite{Tzioufas}]
Suppose $p_b > p_b(c)$. Then for any $p^* < p_b$ and any finite set $D$ of consecutive sites at time $n$, $\prob{|B_n^{2\boldZ} \cap D| < p^*|D|} \leq Ce^{-\gamma |D|}$ for some constants $C, \gamma > 0$.
\end{prop}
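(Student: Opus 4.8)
The plan is to first remove the dependence on $n$ by a monotonicity argument, reducing the claim to a lower-tail large deviation estimate for the density of the upper invariant measure $\bar\nu$ of the supercritical oriented percolation in a window, and then to prove that estimate via the standard renormalization (block) construction together with a stochastic domination by a product field.

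First I would observe that $B_n^{2\boldZ}$ is the wet set at level $n$ of the process started from \emph{all} sites at level $0$, hence started from the maximal configuration. By attractiveness (monotonicity of $B_n^A$ in $A$) and time-translation invariance of the bond environment, a standard coupling gives that $(B_n^{2\boldZ})_{n\ge0}$ is stochastically decreasing in $n$: running the process started full from level $k+1$ and from level $k$ under the same bonds and using monotonicity in the starting set gives $B_n^{(k)}\subseteq B_n^{(k+1)}$ for $n\ge k+1$, while $B_n^{(1)}$ has the law of $B_{n-1}^{(0)}$. Since a supercritical oriented percolation started from the full configuration converges weakly to $\bar\nu$, this decreasing sequence dominates its limit, i.e. $B_n^{2\boldZ}\succeq\bar\nu$ for every $n$. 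As $\{\,|\,\cdot\,\cap D|<p^*|D|\,\}$ is a decreasing event, we obtain
\begin{equation*}
\prob{|B_n^{2\boldZ}\cap D|<p^*|D|}\ \le\ \mathbb{P}_{\bar\nu}\big(|\eta\cap D|<p^*|D|\big)
\end{equation*}
uniformly in $n$, so it suffices to bound the right-hand side, which no longer involves $n$.

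For the density estimate under $\bar\nu$ I would invoke the block construction: since $p_b>p_b(c)$, for any $\eps>0$ there is a scale $L$ and a coarse-graining of space-time into $L$-blocks such that the indicator field of ``good'' blocks — where a good block is engineered to force a fixed positive fraction $\beta L$ of its sites to be wet — stochastically dominates a finite-range-dependent field of density at least $1-\eps$, the range depending only on the geometry of the construction. By the Liggett--Schonmann--Stacey domination theorem this field in turn dominates a product Bernoulli field of parameter $q(\eps)\to1$ as $\eps\to0$ (equivalently one can run a direct Chernoff bound for sums of finitely-dependent Bernoullis). A window $D$ of $|D|$ consecutive sites meets $m=\Theta(|D|/L)$ blocks; on the event that at least $(1-2\eps)m$ of them are good — whose complement has probability at most $Ce^{-c|D|/L}$ by the Chernoff bound for the dominated product field — the window contains at least $\beta L\cdot(1-2\eps)m = \beta(1-2\eps)|D|$ wet sites. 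Since the asymptotic density $\rho(p_b)\to1$ as $p_b\to1$, in the highly supercritical regime actually used in Section~4 one has $p^*<\rho(p_b)$; choosing $\eps$ small enough that $\beta(1-2\eps)>p^*$ then yields $\mathbb{P}_{\bar\nu}(|\eta\cap D|<p^*|D|)\le Ce^{-\gamma|D|}$ with $\gamma=c/L$, and combining with the reduction above completes the proof. (Since this is exactly the statement of \cite{Tzioufas}, one may of course also simply cite it.)

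The step I expect to be the main obstacle is the block construction with the strengthened ``good block $\Rightarrow$ macroscopic wet fraction'' property, and checking that the good-block field has the finite range of dependence required for the domination step; the Chernoff estimate and the $n$-uniformity via monotonicity are then routine. A secondary point to pin down is the precise relation between $p^*$ and the true asymptotic density $\rho(p_b)$ — this is why the argument, and the constants, are cleanest (and the hypothesis $p^*<p_b$ unproblematic) in the regime $p_b$ close to $1$ that the application to $\Zn$ requires.
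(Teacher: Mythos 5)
The paper offers no proof of this proposition at all --- it is quoted from Tzioufas and attributed to Theorem 1 of Durrett--Schonmann --- so there is no in-paper argument to measure yours against. What you sketch (reduce to the upper invariant measure $\bar\nu$ by attractiveness and time-homogeneity, then run a block renormalization plus Liggett--Schonmann--Stacey domination and a Chernoff bound for the coarse-grained field) is exactly the standard route to the cited result, and your reduction step is correct: $B_n^{2\boldZ}$ is stochastically decreasing in $n$ (modulo the harmless parity shift between consecutive levels), so it dominates $\bar\nu$ uniformly in $n$, and the event in question is decreasing. The part you flag as the main obstacle --- engineering good blocks that force a macroscopic wet fraction and checking finite-range dependence --- is indeed where all the work lives, but it is standard.

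The substantive point, which you notice but should state more forcefully, is the threshold: the proposition as written, with hypothesis $p^* < p_b$, is false in general. By self-duality the density of $\bar\nu$ equals the survival probability $\rho(p_b)$ of the cluster of a single site, and $\rho(p_b)\downarrow 0$ as $p_b\downarrow p_b(c)$, so for $p_b$ only slightly supercritical one can choose $\rho(p_b) < p^* < p_b$; then the empirical density in a long window concentrates near $\rho(p_b) < p^*$ and the probability in question tends to one rather than decaying exponentially. What your argument proves --- and what the cited theorem actually gives --- is the bound for $p^* < \rho(p_b)$ (your construction yields some $\beta \le \rho(p_b)$, and pushing $\beta$ all the way up to $\rho(p_b)$ is the refined part of Durrett--Schonmann). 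That corrected version is the one Section 4 needs, since there $p^*$ is taken just above $p_b(c)$ while the occupation parameter is pushed close to $1$, so the relevant density comfortably exceeds $p^*$; strictly speaking one should still check $\rho > p^*$ for the particular value $p=0.9$ used, which neither you nor the paper does quantitatively. So: right proof strategy, but it establishes a corrected statement, not the literal one.
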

This result follows from Theorem 1 of \cite{DS1988}. The result is stated for bond percolation but using proposition 4.1 we can also apply it to site percolation.

The previous result concerns oriented percolation on the infinite lattice $2\boldZ$ where each site or bond initially has some probability $p$ of being occupied/active. However, in our comparison percolation the active sites are determined at the start of each cycle. We can remedy this technical difficulty by showing that for appropriately chosen $D$ and any $k \in \Zn$ we have with high probability $S_n^{2\boldZ} \cap D = S_n^{\{k\}}$ conditional on the event $F_k = \{\{k\} \rightarrow \infty\}$. To that end we require the following result.

\begin{prop}[Durrett 1984 \cite{percolation}]
Define the right edge $r_n = \sup_x\{(x,t):\textrm{x is occupied and }t = n\}$ of a supercritical oriented site percolation starting from $\{k\}$ such that $F_k$ occurs. Then there exists $a = a(p_s) \in (0,1)$ such that $\prob{r_n \leq k+an} \leq Ce^{-\gamma n}$ for some constants $C, \gamma > 0$
\end{prop}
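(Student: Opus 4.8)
The plan is to reduce the statement to an unconditional large-deviation bound and then invoke the renormalization (``block'') machinery for supercritical oriented percolation developed in~\cite{percolation}. Since $\prob{F_k}$ is a fixed positive constant (supercriticality) that does not depend on $n$, and since $F_k\subseteq\{\,S^{\{k\}}_n\neq\emptyset\,\}$ for every $n$, it suffices to show $\prob{S^{\{k\}}_n\neq\emptyset,\ r_n\le k+an}\le Ce^{-\gamma n}$ and then divide by $\prob{F_k}$; by translation invariance we take $k=0$. Two ingredients are needed: a deterministic asymptotic speed $\alpha\in(0,1]$ for the right edge, and a block argument that upgrades almost sure statements to exponential tails.

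For the first ingredient, let $\bar r_n$ be the right edge at level $n$ of the percolation started from all occupied-parity sites $\le 0$ at level $0$. Because the wet set at level $m$ lies in $(-\infty,\bar r_m]$, restarting the configuration above level $m$ and using monotonicity and translation invariance gives $\bar r_{m+n}\le\bar r_m+\bar r_n'$ with $\bar r_n'$ independent of $\bar r_m$ and equal in law to $\bar r_n$. Hence $n\mapsto\E\bar r_n$ is subadditive, and Kingman's subadditive ergodic theorem gives $\bar r_n/n\to\alpha:=\inf_n\E\bar r_n/n$ almost surely, with $\alpha\le 1$ trivially; since $r_n\le\bar r_n$ always, $\limsup_n r_n/n\le\alpha$. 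The strict positivity $\alpha>0$ throughout the supercritical phase is the one genuinely deep input, and is exactly what the block construction of~\cite{percolation} delivers (in the regime $p_s$ near $1$ used in this paper it is elementary, $\alpha\ge 1-o(1)$ as $p_s\to1$).

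For the second ingredient, pick any $a<\alpha$ (so $a<1$) and a large integer $L$, and tile spacetime into boxes of height $L$. Declare a box good if it contains a wet crossing from its lower-center region to a wide portion of its upper side, arranged so that any percolation with a wet site in the lower center is forced to have a wet site at least $aL$ further to the right on the upper side; for each $L$ this is a finite-range, finite-time event, and its probability tends to $1$ as $L\to\infty$ --- this uses $\bar r_L/L\to\alpha>a$ together with the estimate, quoted as a Proposition above, that on survival a supercritical percolation keeps a positive density of wet sites in every interval, which is what lets the argument apply to the single-site process even though its front may be thin. Good boxes then form a finite-range-dependent field of density $1-\epsilon(L)$ with $\epsilon(L)\to0$. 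On $\{S^{\{0\}}_n\neq\emptyset,\ r_n\le an\}$ the advance of the true right edge is obstructed, which forces a barrier of bad boxes separating $(0,0)$ from the region the edge would otherwise reach; a Peierls/contour count --- the same mechanism behind the exponential decay of the death probability quoted above --- bounds the probability of such a barrier by $Ce^{-\gamma n}$ once $\epsilon(L)$ is small enough, giving the desired estimate (and, incidentally, $\liminf_n r_n/n\ge a$ on $F_0$ for every $a<\alpha$, hence $r_n/n\to\alpha$ on $F_0$).

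The main obstacle is the block construction itself, which is doing double duty: it supplies the positivity of $\alpha$ across the full supercritical range and the exponential control, and it must be set up so that it applies to the genuinely one-sided single-site process and so that the resulting rate $\gamma$ is uniform in the base point. All of this is classical~\cite{percolation}; in the $p_s$-near-$1$ regime relevant here every individual step is soft, but one still has to verify that a good box really does carry a thin surviving front forward, which is the point at which the local density estimate is invoked.
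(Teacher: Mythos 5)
First, a point of comparison: the paper does not actually prove this proposition --- it is quoted as a black box from Durrett (1984) --- so your reconstruction is being measured against the classical argument in that reference rather than against anything written here. Your overall architecture is the standard one and is sound: reduce the conditional bound to an unconditional one via $F_k\subseteq\{S_n^{\{k\}}\neq\emptyset\}$ and $\prob{F_k}>0$; obtain the edge speed $\alpha$ from subadditivity of the right edge of the half-line process via Kingman; and convert $a<\alpha$ into an exponential tail by a block/contour argument.

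The one genuine soft spot is the step where you transfer the block argument from the half-line process to the single-site process by invoking ``the estimate, quoted as a Proposition above, that on survival a supercritical percolation keeps a positive density of wet sites in every interval.'' The density proposition quoted in the paper (Durrett--Schonmann/Tzioufas) is stated for $B_n^{2\bZ}$, i.e.\ the process started from \emph{all} sites, and the paper only transfers it to the single-site process \emph{by means of the present right-edge proposition}. As written, your argument is therefore either inapplicable (if you mean the all-occupied version, which says nothing about the thin front grown from $\{0\}$) or circular (if you mean the single-site version). The standard and much cleaner route, which is the one in Durrett (1984), is the planarity/coupling identity: run $\xi^{\{0\}}$ and $\xi^{(-\infty,0]}$ on the same graphical representation; then on the event $\{\xi_n^{\{0\}}\neq\emptyset\}$ one has $r_n^{\{0\}}=\bar r_n$ exactly, because any open path from the half-line to its rightmost wet site at level $n$ must cross any surviving open path from $0$, and the concatenation at the crossing vertex is again an open path from $0$. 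With this identity, $\prob{r_n\le an,\ S_n^{\{0\}}\neq\emptyset}\le\prob{\bar r_n\le an}$, and you only ever need the large-deviation bound for the half-line process, where your block construction goes through with no density input at all. With that substitution your sketch coincides with the classical proof.
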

By symmetry an analogous result holds for the left edge $l_n$. Also note that oriented percolation is translation invariant so without loss of generality we can take $k = 0$.

Suppose $p > p_b(c)(2 - p_b(c))$. Then if we take the set $D$ in proposition 4.5 to be $\{-an, \ldots ,an\}$ then 
$$\prob{|S_n^{2\boldZ} \cap D| \leq p^*2an | F_0} \leq Ce^{-\gamma 2an}.$$
Now note that $\prob{r_n \leq an | F_0} \leq Ce^{-\gamma n}$ and $\prob{l_n \geq -an | F_0} \leq Ce^{-\gamma n}$, so conditional on $F_0$ with probability $1 - Ce^{-\gamma n}$ any $0 \neq x \in S_0^{2\boldZ}$ for which there is a path from $x$ to some $y \in D$ must intersect one of the edges of the percolation starting from $\{0\}$ in which case we have 
$$S_n^{2\boldZ} \cap D|F_0 = S_n^{\{0\}} \cap D|F_0$$
and so 
$$\prob{|S_n^{\{0\}} \cap D| \leq p^*2an|F_0} \leq Ce^{-\gamma 2an}$$

We are now ready to commence the proof of the exponential survival regime. Note that while the values of the constants $C$ and $\gamma$ change from line to line, the values themselves are uninteresting, and we only define finitely many different constants and so can take minima and maxima as needed.

\begin{proof}
Fix $\al > 0$, choose $p^* > p_b(c)$ and $p > p^*(2-p^*)$. By section 3 we can choose $\lam$ large so that the contact process stochastically dominates an oriented site percolation with occupancy probability $p$, so we consider that process on $\Zn$ instead, starting from all sites occupied.
Divide $\Zn$ into two halves, say $\{0, \ldots n/2 - 1\}$ and $\{n/2 \ldots n\}$. If we ignore for the moment the second half and consider only the first half, we can note that the by proposition 4.3, a percolation starting from the first half survives forever on $2\boldZ$ (and thus also until time $n$) with probability at least $1 - Ce^{\gamma n/2}$ since the starting set has size $n/2$. Using translation invariance we can conclude from this that $\prob{F_0} \geq 1 - Ce^{\gamma n/2}$.

Now suppose the percolation from the first half survives until time $n$ (which it does if $F_0$ occurs) and thus has had an opportunity to spread across the second half, but not to wrap back on itself. Choose $\epsilon < p/3$. Then by propositions 4.2, 4.4, and 4.5 
$$\prob{|A_{n/2}^{\{0, \ldots n/2 - 1\}}| < 2\epsilon n} \leq \prob{|A_{n/2}^{\{0, \ldots n/2 - 1\}}| < 2\epsilon n|F_0} + \prob{F_0^c} \leq Ce^{\gamma n}.$$
If we then have at least $2\epsilon n$ occupied sites on $\Zn$, we must be able to take one half of $\Zn$ that has at least $\epsilon n$ occupied sites, and we can repeat the process, again starting from a set with size $O(n)$. Call each time through these steps a cycle. By a union bound on the probability of failure at each step, the probability of a successful cycle is at least $1 - Ce^{-\gamma n}$. Thus, $\tau$ stochastically dominates a geometric random variable with success probability $1 - Ce^{-\gamma n}$, and so there exist $C, \gamma > 0$ such that $\prob{\tau \leq Ce^{\gamma n}} \rightarrow 0$ as $n \rightarrow \infty$.
\end{proof}

\section{Results for the star graph}
The goal of this section is to prove Theorem~\ref{star-thm}. On the star graph it is possible to reformulate the CPA model by only assigning states to the center and the leaves, and not individual edges. We do this as follows. The center takes on values in $\{0,1\}$ meaning healthy and infected as before. Leaves take on states in $\{0,1\} \times \{A,D\}$ where 0 and 1 denote healthy and infected (vertex) states, and $A$ and $D$ denote active and inactive (edge) states. Active leaves can both receive and transmit the infection, while inactive leaves can do neither. Depending on the state of the center, the system follows different dynamics.

\begin{definition}\label{stardef}
When the center is infected (one-phase)
\begin{enumerate}
\item $0A \rightarrow 1A$ at rate $\lam$ (Center infects leaf)
\item $1A \rightarrow 0A$ at rate $1$. (Leaf recovers)
\item $0A \rightarrow 0D$ at rate $\al$ (Leaf avoids center)
\item $1D \rightarrow 0A$ at rate $1$. (Leaf that had been avoided by the center in a previous zero-phase recovers)
\item The center goes from $1 \rightarrow 0$ at rate $1$.
\end{enumerate}

When the center is healthy (zero-phase)
\begin{enumerate}
\item At the time of the center's recovery set all $0D$ to $0A$ (Leaves stop avoiding the center)
\item $1A \rightarrow 0A$ at rate $1$. (Leaf recovers)
\item $1A \rightarrow 1D$ at rate $\al$ (Center avoids leaf)
\item The center goes from $0 \rightarrow 1$ at rate $m\lam$ where $m$ is the current number of $1A$ leaves.
\end{enumerate}
\end{definition}

The system is perhaps most easily understood by referring to Figure~\ref{fig:star}. Some thought reveals that on the star graph this is equivalent to the formulation of the CPA model given in the introduction.

\begin{figure}
\includegraphics[scale=.33]{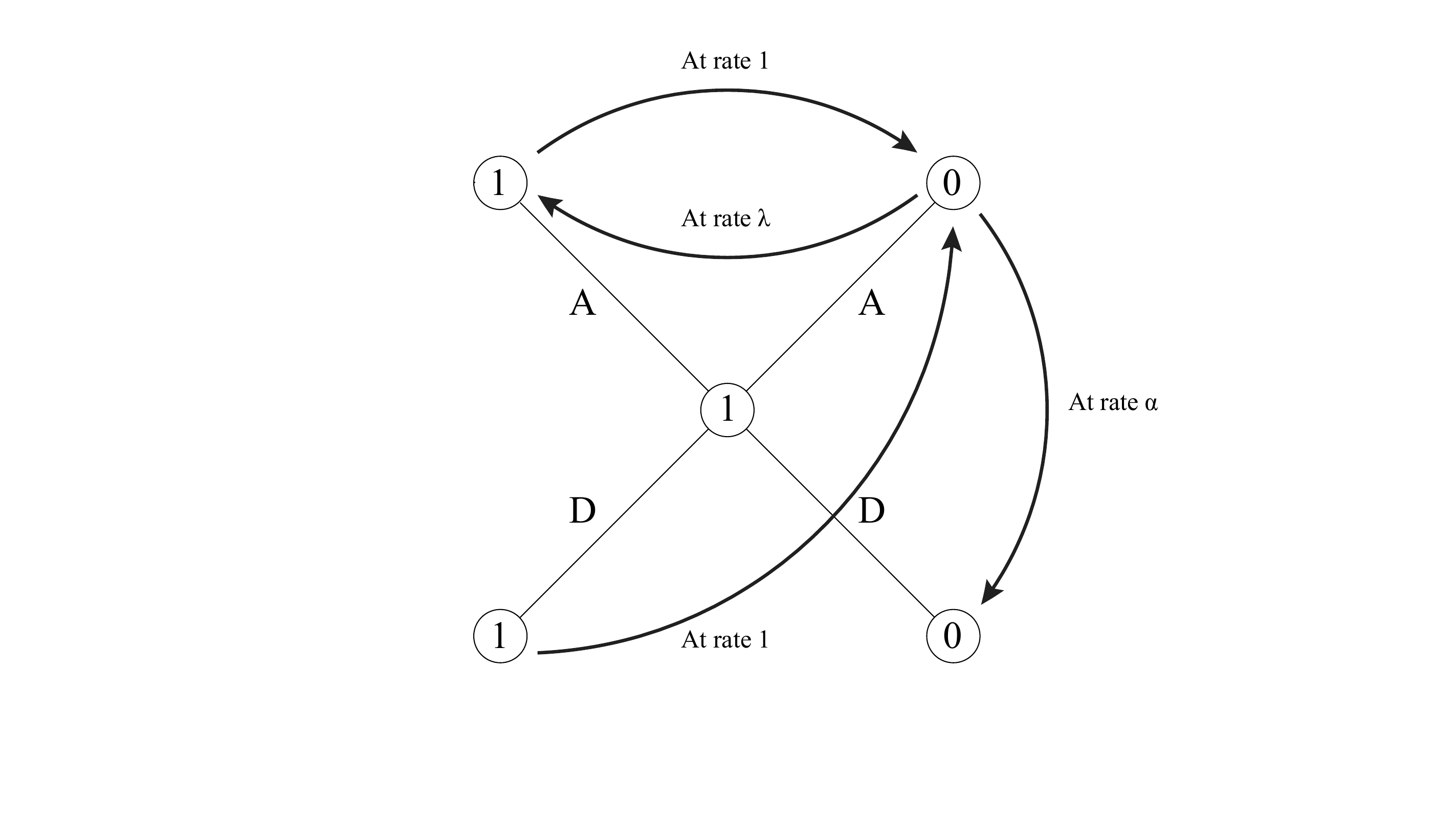}
\includegraphics[scale=.33]{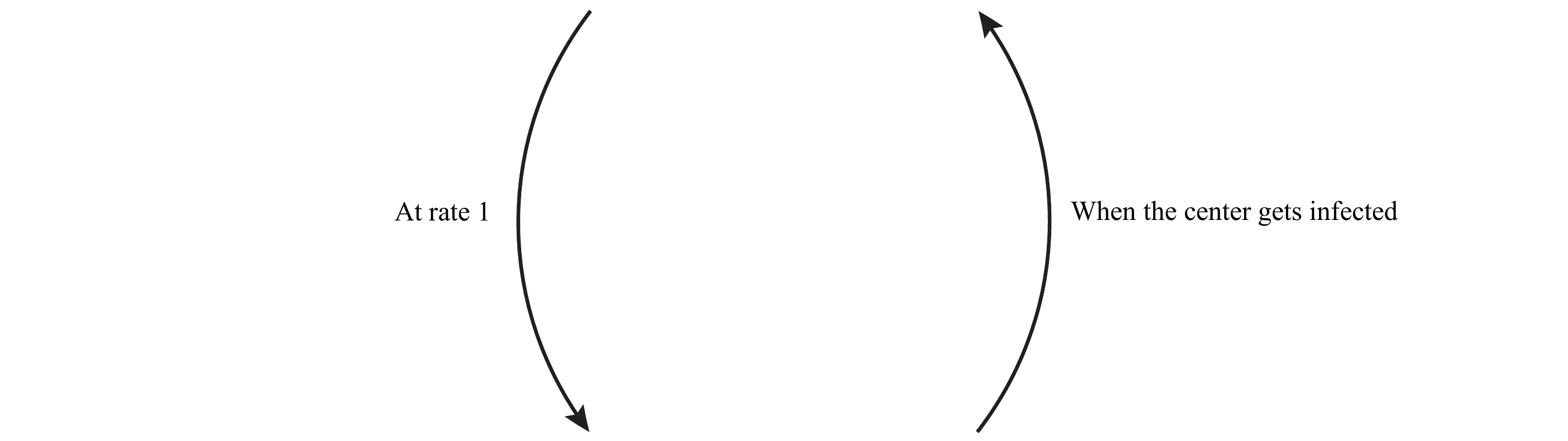}
\includegraphics[scale=.33]{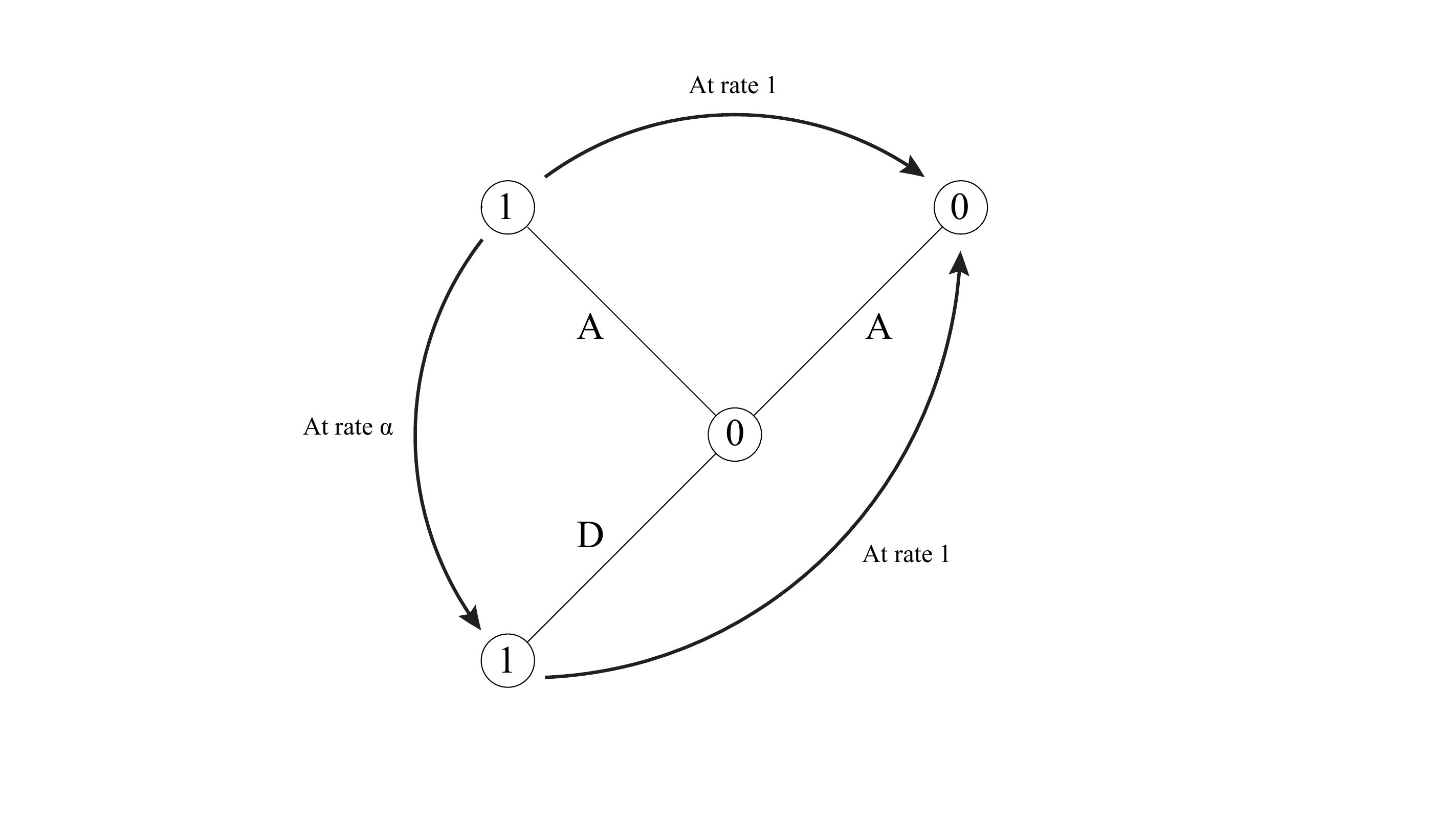}
\caption{\label{fig:star} Star graph dynamics}
\end{figure}

The center plays a special role in the dynamics of the star graph, and so to understand the process dynamics on the star graph we consider in turn the dynamics when the center is infected (which we will call the one-phase,) the dynamics when the center is healthy (which we will call the zero-phase,) and change in the number of infected leaves between consecutive one-phases.

When the center is infected, the set of possible states of the other nodes is $S=\{1A, 0A, 0D, 1D\}$. If $X_t$ denotes the state of a typical node at time $t$ and if 
\[ V_{i,j}(t) =\prob{X_t=j| X_0=i} \text{ for } i, j \in S \text{ and }\vV=((V_{i,j}))_{i,j \in S},\]
then using standard arguments for continuous time Markov chain it is easy to see that 
\[ \vV'(t)=\vV(t)\vA, \text{ where } \vA=\begin{array}{c}
1A \\0A \\0D \\1D \end{array}
\left[ \begin{array}{cccc}
-1 & 1 & 0 & 0 \\
\lambda & -(\lambda+\alpha) & \alpha & 0 \\
0 & 0 & 0 & 0\\
0 & 1 & 0 & -1
\end{array}
\right]
\]
In order to find the eigenvalues of $\vA$ note that 
\[det(\vA-\gamma I) = \gamma(-1-\gamma)(\al \gamma + \al + \lam \gamma + \gamma^2 + \gamma)\]
So the eigenvalues are $0,  -\gamma_1, -\gamma_2$ and -1, where 
\[ \gamma_1=\frac 12[(\lambda+\alpha+1)-\sqrt{(\lambda+\alpha+1)^2-4\alpha}],
\gamma_2=\frac 12[(\lambda+\alpha+1)+\sqrt{(\lambda+\alpha+1)^2-4\alpha}].\]
Simple algebra shows that
\beqa \label{gammaprop}
\gc_1+\gc_2=\gl+\ga+1, \quad \gc_1\gc_2 = \ga, \quad \gc_2-\gc_1=[\gl^2+(\ga-1)^2+2\gl(\ga-1)]^{1/2}, \\
0 \le \gamma_1\le 1\le \gamma_2 \le 1+\gl + \al.
\eeqa
We note that this differs from the case of the classical contact process, in which the matrix $\vA$ has rank 2 and eigenvalues $0,0,-1,$ and $-(1+\lambda)$.

From the description of the process and the generator matrix, we can make a heuristic argument for why the survival time on the star is like $n^{\Delta}$ where $\Delta = 1/\gamma_1$. Essentially, the process dies when we observe a long one-phase during which all the leaves start avoiding the center. Suppose we start at the beginning of a one-phase. Then this one-phase lasts for time T where $T \sim$ Exp$(1)$. At time T, we expect the number of $0D$ leaves to be about $ne^{-\gamma_1 T}$, which is less than 1 when $T > (1/\gamma_1)\log n$. Since T is an Exp$(1)$ random variable, $\prob{T > (1/\gamma_1)\log n} = n^{-1/\gamma_1}$ so we should need about $n^{-1/\gamma_1}$ one-phases to observe a one-phase long enough for the process to die, and each one-phase-zero-phase cycle lasts on average for $O(constant)$ time, suggesting a survival time of $n^{\Delta}$. Of course, there are many details that need to be verified, and the remainder of this section is devoted to making this heuristic argument rigorous. 

The right eigenvectors for those eigenvalues are the columns of the following matrix.
\[ \vB=\left[\begin{array}{cccc}
               1 & \zeta & \zeta & 0 \\
               1 & 1-\gamma_1 & 1-\gamma_2 & 0 \\
               1 & 0 & 0 & 0 \\
               1  & 1 & 1 & 1
               \end{array}\right].
 \]                                       
   where $\zeta = \frac{(1-\gamma_1)(\gamma_2-1)}{\lambda}$\\ So $\vA\vB=\vB\vD$, where $\vD=Diag(0, -\gamma_1, -\gamma_2, -1)$. Let $\vW(t)=\vV(t)\vB$, so
   \[ \vW'(t) = \vV'(t)\vB = \vV(t)\vA\vB = \vV(t)\vB\vD = \vW(t)\vD.\]
   Since $\vD$ is diagonal, we have $\vW(t) = \vW(0)\exp(\vD t)$. Hence, using the fact that $\vV(0)=I$
   \[ \vV(t) = \vB\exp(\vD t) \vB^{-1}.\]
 
   \begin{lemma}\label{uvprop}
   If $u(t)  :=  \vV_{1A,1A}(t)$ and  $v(t)  :=  \vV_{0A,1A}(t)$, then
   \begin{enumerate}
 \item $u(\cdot)$ is decreasing,  $u(0)=1$ and $u(t) \downarrow 0$  as  $t\to\infty$ exponentially fast in $t$.,
  \item $v(0)=0$, $v(\cdot)$ is increasing (resp.~decreasing) for $t\le$ (resp. $\ge$)  $(\log\gamma_2-\log\gamma_1)/(\gamma_2-\gamma_1)$ and $v(t) \to 0$ as  $t\to\infty$ exponentially fast in $t$,
 \item $v(t) \le  u(t)$ for all $t\ge 0$,
\item the map $\eta\mapsto f(\eta):=\int_0^\infty (\gh u(t)+(1-\gh)v(t))e^{-t}\; dt -\eta$ is monotonically decreasing, and $f(\eta) \ge 0$ (resp.~$\le 0$) for $\eta \le$ (resp.~$\ge$) $\gl/(\gl+\ga+2)$ 
 \end{enumerate}
   \end{lemma}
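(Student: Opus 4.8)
The plan is to extract $u(t)$ and $v(t)$ explicitly from the spectral decomposition $\vV(t)=\vB\exp(\vD t)\vB^{-1}$ and then read off each of the four monotonicity/sign statements. First I would compute $\vB^{-1}$ (a $4\times4$ inverse, but the last row/column of $\vB$ is trivial so this reduces to inverting the $3\times3$ block with rows $(1,\zeta,\zeta)$, $(1,1-\gamma_1,1-\gamma_2)$, $(1,0,0)$), and write
\[
u(t) = \vV_{1A,1A}(t) = c_0^{u} + c_1^{u}e^{-\gamma_1 t} + c_2^{u}e^{-\gamma_2 t} + c_3^{u}e^{-t},
\qquad
v(t) = \vV_{0A,1A}(t) = c_0^{v} + c_1^{v}e^{-\gamma_1 t} + c_2^{v}e^{-\gamma_2 t} + c_3^{v}e^{-t},
\]
where the coefficients are explicit rational functions of $\gamma_1,\gamma_2,\lambda$. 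Because $1A$ and $0A$ are the two transient/recurrent states that cannot reach the $0D$ column under $\vA^{\top}$ acting the right way—more precisely, because the stationary behaviour of the chain killed appropriately forces the coefficient of the eigenvalue $0$ to vanish in the $1A$ column—I expect $c_0^{u}=c_0^{v}=0$, which is what makes both functions decay to $0$ exponentially (with rate $\gamma_1$, the slowest nonzero rate, since $0\le\gamma_1\le1\le\gamma_2$ by \eqref{gammaprop}). This already gives the decay assertions in (1) and (2). The normalization $u(0)=1$, $v(0)=0$ is immediate from $\vV(0)=I$.

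For (1), monotonicity of $u$: I would show $u'(t)\le 0$ directly. One clean route avoids computing coefficients at all — $u(t)=\vV_{1A,1A}(t)$ is the probability that a node started in state $1A$ is in state $1A$ at time $t$, and since $1A$ is not reachable from $0D$ or (cheaply) re-enterable, $u$ should satisfy $u(t+s)\le u(t)u'$-type submultiplicativity... but more robustly, $u'(t) = (\vB\vD\exp(\vD t)\vB^{-1})_{1A,1A}$, and plugging in the (now known) coefficients gives $u'(t) = -\gamma_1 c_1^u e^{-\gamma_1 t} - \gamma_2 c_2^u e^{-\gamma_2 t} - c_3^u e^{-t}$; I'd check the signs of $c_1^u,c_2^u,c_3^u$ make this $\le 0$. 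For (2), the stated critical time $(\log\gamma_2-\log\gamma_1)/(\gamma_2-\gamma_1)$ is exactly where $\tfrac{d}{dt}(e^{-\gamma_1 t}-e^{-\gamma_2 t})=0$, which strongly suggests $v(t) = c\,(e^{-\gamma_1 t}-e^{-\gamma_2 t})$ for a positive constant $c$ (the $e^{-t}$ coefficient vanishing because $v(0)=0$ together with $v$ lying in the span of the two "genuinely mixing" modes); then $v'(t)=0$ precisely at that time and the sign pattern follows. I would verify $c>0$, using $0\le\gamma_1\le1\le\gamma_2$ and $\zeta=(1-\gamma_1)(\gamma_2-1)/\lambda\ge0$.

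For (3), $v(t)\le u(t)$: with the explicit forms, $u(t)-v(t)$ is again a combination of $e^{-\gamma_1 t},e^{-\gamma_2 t},e^{-t}$; I would either check its coefficients are nonnegative, or — more conceptually — couple two copies of the node chain started from $1A$ and $0A$ respectively under the same driving Poisson clocks so that the $1A$-started copy dominates the $0A$-started copy pathwise in the partial order where $1A\succeq 0A$ (infected dominates healthy, active vs.\ active), giving $\prob{X_t=1A\mid X_0=1A}\ge\prob{X_t=1A\mid X_0=0A}$ directly. For (4), define $f(\eta)=\int_0^\infty(\eta u(t)+(1-\eta)v(t))e^{-t}\,dt-\eta = \eta\big(\int_0^\infty(u-v)e^{-t}\,dt-1\big)+\int_0^\infty v e^{-t}\,dt$; this is affine in $\eta$ with slope $\int_0^\infty(u(t)-v(t))e^{-t}\,dt - 1$, which is $\le 0$ because $u(t)-v(t)\le u(t)\le 1$ (actually $<1$ for $t>0$), giving monotone decrease. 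The two integrals $\int_0^\infty u e^{-t}dt$ and $\int_0^\infty v e^{-t}dt$ are elementary: $\int_0^\infty e^{-\gamma_i t}e^{-t}dt=1/(1+\gamma_i)$, so with $\gamma_1+\gamma_2=\lambda+\alpha+1$ and $\gamma_1\gamma_2=\alpha$ from \eqref{gammaprop} everything collapses, and I'd solve $f(\eta)=0$ to confirm the root is $\lambda/(\lambda+\alpha+2)$.

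The main obstacle I anticipate is purely computational: correctly inverting $\vB$ and simplifying the coefficients $c_i^u,c_i^v$ using the relations $\gamma_1+\gamma_2=\lambda+\alpha+1$, $\gamma_1\gamma_2=\alpha$, and $\zeta=(1-\gamma_1)(\gamma_2-1)/\lambda$ so that the claimed vanishing of the $\eta=0$-mode coefficients, the positivity of $c$ in part (2), and the exact root in part (4) all come out cleanly. The coupling arguments for (1) and (3) are attractive alternatives that sidestep some of this, but verifying that the CPA node dynamics on a single leaf (with the center fixed infected) really is monotone in the single-site order requires checking each transition in Definition~\ref{stardef}'s one-phase list respects the order, which is a small but not entirely automatic case check; if it fails I fall back on the explicit-coefficient computation.
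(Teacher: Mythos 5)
Your proposal follows essentially the same route as the paper: extract $u(t)=\frac{\gamma_2-1}{\gamma_2-\gamma_1}e^{-\gamma_1 t}+\frac{1-\gamma_1}{\gamma_2-\gamma_1}e^{-\gamma_2 t}$ and $v(t)=\frac{\lambda}{\gamma_2-\gamma_1}\left(e^{-\gamma_1 t}-e^{-\gamma_2 t}\right)$ from the spectral decomposition (the constant and $e^{-t}$ modes do vanish, exactly as you predict), then read off (1)--(4) from the signs of the coefficients, the relations in \eqref{gammaprop}, and the elementary integrals $\int_0^\infty e^{-(1+\gamma_i)t}\,dt=(1+\gamma_i)^{-1}$. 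One caveat: solving $f(\eta)=0$ yields $\eta=\lambda/(\lambda+\alpha+1)$, not $\lambda/(\lambda+\alpha+2)$ --- this agrees with the computation in the paper's own proof, so the threshold printed in the lemma statement appears to be a typo rather than a flaw in your plan.
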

   
   \begin{proof}  By computation we see that
     \begin{align*}  
     u(t) & :=  \frac{(1-\gamma_1)e^{-\gamma_2t} - (1-\gamma_2)e^{-\gamma_1t}}{\gamma_2-\gamma_1} = \frac{\gamma_2-1}{\gamma_2-\gamma_1}e^{-\gamma_1t}+\frac{1-\gamma_1}{\gamma_2-\gc_1}e^{-\gamma_2 t}\\
    v(t) & :=  \left(\frac{(1-\gamma_1)(\gamma_2 - 1)}{\zeta (\gamma_2 - \gamma_1)}\right)(e^{-\gamma_1 t} - e^{-\gamma_2 t}) = \left(\frac{\lambda}{\gamma_2 - \gamma_1}\right)(e^{-\gamma_1 t} - e^{-\gamma_2 t}).
    \end{align*}
  1. From the properties of $\gamma_1$ and $\gamma_2$ in \eqref{gammaprop} it is clear that $\gc_1, \gc_2>0$ and the coefficients of $e^{-\gamma_1t}$ and $e^{-\gamma_2t}$ in $u(t)$ are both positive.  \\
  2. We observe (a) $0<\gc_1<\gc_2$, (b) $v(t)$ is a multiple of $e^{-\gc_1t} - e^{-\gc_2t}$, and (c) $v'(t)$ vanishes at $t=(\log\gc_2-\log\gc_1)/(\gc_2-\gc_1)$.\\
3. From the properties of $\gamma_1$ and $\gamma_2$ in \eqref{gammaprop}
\[v(t) \le \frac{\lambda}{\gamma_2-\gamma_1}e^{-\gamma_1t} \le \frac{\gamma_2-1}{\gamma_2-\gamma_1}e^{-\gamma_1t} \le u(t) \forall t\ge 0.\]
4.. Since $\int_0^\infty e^{-(1+a)t}\; dt = (1+a)^{-1}$ for any $a>0$,
\beqax
f(\gh) & = & \frac{1}{\gc_2-\gc_1}\left[\gh\left(\frac{\gc_2-1}{1+\gc_1}+\frac{1-\gc_1}{1+\gc_2}\right) + (1-\gh)\left(\frac{\gl}{1+\gc_1}-\frac{\gl}{1+\gc_2}\right)\right]-\eta \\
& = & \frac{1}{(1+\gc_1)(1+\gc_2)}\left[\gh(\gc_2+\gc_1) + (1-\gh)\gl\right]-\eta \\
f'(\gh) & = & \frac{\gc_1+\gc_2-\gl}{(1+\gc_1)(1+\gc_2)}-1.
\eeqax
Since $\gc_1+\gc_2=\gl+\ga+1$ and $\gc_1\gc_2=\ga$, it is easy to see that $f'(\gh)<0$ and
\[f(\eta)=0 \Leftrightarrow \eta=\frac{\gl}{(1+\gc_1)(1+\gc_2)-(\ga+1)} = \frac{\gl}{\gl+\ga+1}.\]
\end{proof}

Note that in the case of the classical contact process, 1. and 2. above do not hold, and $u(t)$ and $v(t)$ do not converge to $0$. It is the addition of avoidance allows the leaves to eventually avoid the center, after which they can no longer become infected during the current one-phase. This is the key difference driving the differing survival behaviors of the classical contact process and the CPA process on the star graph.

Next we will focus on the evolution of the number of nodes in different states of $S$. Note that at the beginning of each one-phase there is no node with state 0D and all nodes with state  0D at the end of each one-phase change their state to 0A at the beginning of the next zero-phase. Since the total number of nodes is $n$ (the size of the star graph), it suffices to keep track of the number of nodes in states 1A and 1D at the beginning of the one-phases. 
    
   When there are $m$ nodes in state 1A any time during a zero-phase, the rate at which the center gets infected is $\lambda m$. Also the rates at which nodes change their states to 0A and 1D are $m$ and $\alpha m$ respectively. Therefore, the time to the next event is exponentially distributed with mean $1/(\lambda+\alpha+1)m$, and the probability that the next event is the center becoming infected (before any nodes change their states) is
    \begin{equation}\label{lamhat}
     \hat\lambda=\frac{\lambda}{1+\alpha+\lambda},
     \end{equation}
    which does not depend on $m$.
    So if $N$ is the number of 1A nodes lost during a zero-phase, then $N$ has shifted Geometric  distribution with success probability $\hat\lambda$.
    \beq\label{Ndef}
     \prob{N=k} = (1-\hat\lambda)^k \hat\lambda, \quad k\ge 0.\eeq  
   Each of those nodes that changes its state from 1A becomes 1D or 0A with probability $\alpha/(1+\alpha)$ and $1/(1+\alpha)$ respectively. So the number of 1D nodes added during a zero-phase conditioned on $N$ is Binomial with parameters $N$ and $\alpha/(1+\alpha)$.  Unconditionally its distribution is shifted Geometric with success probability $\lambda/(\lambda+\alpha)$. Also if $\{T_i\}_{i\ge 0}$ denote the sequence of the above event times, then $T_0=0$ and $T_{i+1}-T_i$ has exponential distribution with mean $1/(\lambda+\alpha+1)(m-i)$. If a node changes its state from 1A to 1D at time $T_i$, then it stays 1D till the end of the current zero-phase with probability $\exp(-(T_{N+1}-T_i))$, where $N$ is the number of nodes lost from 1A.

   On the other hand, if $T$ is the duration of a phase 1, then during this phase a node with state 1D does not change its state with probability $e^{-T}$. Some of the 1D nodes that change to 0A nodes could then also change to 1A nodes before the end of the 1 state. If we let $\{\sigma_i\}_{i \geq 0}$ denote the times when 1D nodes change to 0A where $\sigma_0 = 0$, then the number of 1D nodes that change to 1A is $\sum_{i=1}^{L - \tilde L}Ber \left(v(T - \sigma_i)\right)$. Similarly using the $\{T_i\}_{i\ge 0}$ defined previously the number of 1A nodes that change to 1D and stay 1D until the end of a zero phase is $\sum_{i=1}^{N}\mathrm{Ber} \left(\frac{\al}{1+\al}(T_{N+1} - T_i) \right)$
    
    Using the above argument and the notation  $u(t)$ and $v(t)$ as in Lemma \ref{uvprop},
    we see that the transition in the number of 1A and 1D states at the beginning of two consecutive phase 1 states of the system can be described as follows.
    \begin{align*} 
    \left(\begin{array}{c}
                      K \\
                      L\end{array} \right)  
         & \xrightarrow[]{\text{ during phase 1}} 
                      \left(\begin{array}{c}
                      \tilde K = \mathrm{Bin}(K,u(T)) + \mathrm{Bin}(n-K-L,v(T)) + \sum_{i=1}^{L - \tilde L}\mathrm{Ber} \left(v(T-\sigma_i)\right)\\
                       \tilde L = \mathrm{Bin}(L, e^{-T}) 
                       \end{array}\right),\\
         \left(\begin{array}{c}
                      \tilde K \\
                      \tilde L\end{array} \right)  
         & \xrightarrow[]{\text{ during phase 0}} 
                      \left(\begin{array}{c}
                      \left( \tilde K - N \right)^+\\
                       \mathrm{Bin}(\tilde L, \exp(-T_{N+1})) + \sum_{i=1}^N \mathrm{Ber}\left(\frac{\alpha}{1+\alpha} (T_{N+1}-T_i)\right)
                       \end{array}\right),
    \end{align*}
    where $T \sim Exp(1)$, $N \sim Geom(\hat\lambda)$, $T_0=0$ and $(T_{i+1}-T_i) \sim Exp((\lambda+\alpha+1)(\tilde K-i))$. Conditionally on $T, N$, $\{\sigma_i\}_{i \geq 0}$ and $\{T_i\}_{i\ge 0}$ all Binomial and Bernoulli random variables are independent.
  
  In order to analyze the above Markov chain, let $(K_i, L_i)$ be the number of 1A and 1D nodes at the beginning of the $i$-th one-phase. We would like to be able to ignore the $1D$ nodes and analyze $K_i$ assume that $L_i = 0$ for all $i$. To do this we first need to introduce some new processes.
  
  If we assume transitions to $1D$ do not occur (so $L_i = 0$ for all $i$), then we obtain a new Markov chain $\{Z_i\}_{i=0}^\infty$, where $Z_i$ is the number of $1A$ nodes at the start of the $i^{\text{th}}$ one-phase. The sequence $\{Z_i\}_{i=0}^\infty$ is defined by $Z_0 = n$ and for $i\ge 0$,
\beq \label{Ztdef}
Z_{i+1} = (X_i + Y_i - N_i)^+,
\eeq
 where  
 \beqax
 &X_i \sim \mathrm{Bin}(Z_i, u(T_i)),  &Y_i \sim \mathrm{Bin}(n-Z_i, v(T_i)), \\
 &T_i \sim \mathrm{Exp}(1) \text{ and } &N_i \sim \text{shifted  Geometric}(\hat\lambda)  \text{ (as in \eqref{Ndef})},
 \eeqax 
where the coin flips involved in the Binomial expressions above are assumed to be conditionally independent of everything else given $Z_i$ and $T_i$, and $\{T_i: i\ge 0\}$ and $\{N_i: i\ge 0\}$ are assumed to be i.i.d.~sequences, independent of each other.

In order to justify studying the dynamics of $Z_i$ rather than $(K_i,L_i)$, we will show that there exists a good event $\sfG$ with probability going to $1$ as $n \rightarrow \infty$ on which there exists a coupling such that
\begin{equation}
Z_i^* \leq K_i \leq Z_i
\end{equation}
where $Z_i^* = Z_i - C^*\cdot(\log n)^3$ for all $i$ and $C^*>0$ is a constant, which will be chosen later to depend on $\alpha$ and $\lambda$.

In comparing $Z_i$ and $(K_i,L_i)$ we encounter two possible problems. First, we must establish that $1D$ vertices cannot accumulate in $(K_i,L_i)$. Second, even if the number of $1D$ vertices is bounded above, there may still be a significant flow of vertices from state $1A$ to $1D$ to $0A$, so we must establish that this drain of $1A$ vertices does not cause $Z_i$ and $K_i$ to drift too far apart. The good event $\sfG$ will ensure that neither of these happen. 

To define $\sfG$ we first define another process $\{W_i\}_{i=0}^{\infty}$. Let $C>0$ be a constant, and let $W_0 = n - C(\log n)^2$ and
\beq \label{Ztdef}
W_{i+1} = (X^W_i + Y^W_i - N^W_i)^+,
\eeq
 where  
 \begin{equation*}
 \begin{aligned}
 X^W_i &\sim \mathrm{Bin}(W_i-C(\log n)^2, u(T_i)),  \\
 Y^W_i &\sim \mathrm{Bin}(n-C(\log n)^2-(W_i-C(\log n)^2), v(T_i)), \\
 T_i &\sim \mathrm{Exp}(1), \\
 \text{ and } N^W_i &\sim \text{shifted  Geometric}(\hat\lambda)  \text{ (as in \eqref{Ndef})}.
 \end{aligned}
 \end{equation*}
In essence $\{W_i\}_{i=0}^{\infty}$ has $C(\log n)^2$ vertices removed that can be thought of as being fixed as $1D$ and at the beginning of each one-phase $C(\log n)^2$ vertices are converted directly from state $1A$ to $0A$. 
  
  \begin{lemma}\label{tight}
Let $\lambda, \alpha, \gamma > 0$. Then for any $\epsilon > 0$, there exist $N>0$ such that
\begin{enumerate}
\item $\prob{\max_{i = 1, \ldots n^{\gamma}}(L_i) > (12\gamma + 1)\frac{(\lambda + \alpha)(\gamma + 1)}{\alpha} (\log n)^2} < \epsilon$.
\item Let $C$ be the constant in the definition of the process $\{W_i\}$, and let $R_{i,k}$ be the number of the $C(\log n)^2$ vertices that were converted from $1A$ to $0A$ at the start of the $i$th one-phase that have not been reinfected by the start of the $k$th one-phase. Then 
$$
\prob{\max_{k=1 \ldots n^\gamma}\sum_{i=1}^k R_{i,k} \ge \Big(\frac{2\gamma}{\log(\frac{1 + \alpha + \lambda}{1 + \lambda})} + 1\Big) C  (\log n)^3} < \epsilon \text{ for all $n \geq N$.}
$$
\end{enumerate}
\end{lemma}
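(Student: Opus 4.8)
The plan is to prove both bounds by the same two–scale scheme. In each case I decompose the quantity of interest over the one–phases, isolate a ``recent'' window of $\Theta_{\lambda,\alpha}(\gamma\log n)$ one–phases, argue that whatever was created before that window has with overwhelming probability been eliminated by the time we look (a $1D$ node recovers at rate $1$; a removed $0A$ vertex is reinfected with a probability bounded below during each subsequent one–phase), and then bound the in–window contribution deterministically by (window length)$\times$(per–one–phase input). For (1) the per–one–phase input is a shifted–geometric number of new $1D$ nodes, of size $O_{\lambda,\alpha}(\gamma\log n)$ with high probability, giving $O_{\lambda,\alpha}((\log n)^2)$; for (2) the per–one–phase input is exactly $C(\log n)^2$ removed vertices, giving $O_{\lambda,\alpha}((\log n)^3)$. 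Every ``with high probability'' claim is a union bound over the at most $n^\gamma$ one–phases, which is precisely what forces the window length to carry a factor proportional to $\gamma+1$, so that a per–step failure probability raised to the window length is $o(n^{-\gamma})$; tracking those constants is what produces the explicit prefactors in the statement.

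For (1): recall from the transition description that a node can become $1D$ only during a zero–phase, that the number of new $1D$ nodes created in zero–phase $j$ is stochastically dominated by $N_j$, a shifted–Geometric$(\hat\lambda)$ variable with $\hat\lambda=\lambda/(1+\alpha+\lambda)$ (indeed its $1D$–surviving part is shifted–Geometric with parameter $\lambda/(\lambda+\alpha)$), and that once a node is $1D$ it recovers at rate $1$ and so disappears after an $\mathrm{Exp}(1)$ amount of elapsed real time. Hence $L_i\le\sum_{j<i}M_{j,i}$, where $M_{j,i}$ counts the nodes created in zero–phase $j$ that are still $1D$ at the start of one–phase $i$; conditionally on the history $M_{j,i}$ is dominated by a $\mathrm{Bin}(N_j,e^{-S_{j,i}})$, where $S_{j,i}$ is at least the total length of one–phases $j+1,\dots,i-1$, a sum of $i-1-j$ i.i.d.\ $\mathrm{Exp}(1)$'s. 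Fixing $\ell_0=\Theta_{\lambda,\alpha}((\gamma+1)\log n)$, one union bound over zero–phases gives that with probability $\ge 1-\epsilon/2$ every $N_j$ is at most $\Theta_{\lambda,\alpha}((\gamma+1)\log n)$ by the geometric tail, and a second union bound over pairs $j<i$ gives that with probability $\ge 1-\epsilon/2$, for every $i$, every batch with $j\le i-\ell_0$ has been flushed, since $S_{j,i}\ge \ell_0/2$ exceeds $\log n$ and hence the maximum of the $\le n$ relevant i.i.d.\ $\mathrm{Exp}(1)$ recovery clocks. On the intersection, $L_i\le\sum_{i-\ell_0\le j<i}N_j\le \ell_0\cdot\Theta_{\lambda,\alpha}((\gamma+1)\log n)$, which is of the claimed order once the constants are carried through.

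For (2): at the start of one–phase $i$ the set $V_i$ of $C(\log n)^2$ removed vertices is entirely in state $0A$, and while such a vertex remains un-reinfected it becomes $1A$ during each subsequent one–phase with probability $\ge q(\lambda,\alpha)>0$ (a race, during an $\mathrm{Exp}(1)$–length one–phase, between infection at rate $\lambda$ and the competing avoidance/phase–end clocks); these races are conditionally independent across one–phases, since a removed vertex's reinfection clock is an independent $\mathrm{Exp}(\lambda)$ that affects nothing until it fires. Thus the number of one–phases a fixed vertex of $V_i$ survives un-reinfected is dominated by a geometric variable, so $\prob{v\in V_i\text{ not reinfected by one-phase }i+\ell_1}\le(1-q)^{\ell_1}$; carrying out the elementary computation of this per–one–phase probability and choosing $\ell_1$ so that $(1-q)^{\ell_1}\le n^{-2\gamma}$ yields exactly $\ell_1=\big(\tfrac{2\gamma}{\log((1+\alpha+\lambda)/(1+\lambda))}+1\big)\log n$. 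Let $B$ be the event that some $v$ in some $V_i$ with $i\le n^\gamma$ is not reinfected within $\ell_1$ one–phases of its removal; then $\prob{B}\le n^\gamma\cdot C(\log n)^2\cdot(1-q)^{\ell_1}=o(1)$, and on $B^c$ one has $R_{i,k}=0$ for every $k$ and every $i\le k-\ell_1$, whence $\sum_{i=1}^k R_{i,k}=\sum_{k-\ell_1<i\le k}R_{i,k}\le\ell_1\cdot C(\log n)^2=\big(\tfrac{2\gamma}{\log((1+\alpha+\lambda)/(1+\lambda))}+1\big)C(\log n)^3$ for all $k\le n^\gamma$, as claimed.

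The main obstacle is the dependence bookkeeping in part (1): the survival of a $1D$ batch over many one–phases is governed by the very recovery clocks whose firing returns nodes to $0A$ and can in turn lengthen or shorten later one–phases, so the domination $M_{j,i}\preceq\mathrm{Bin}(N_j,e^{-S_{j,i}})$ together with the lower bound $S_{j,i}\gtrsim i-1-j$ must be arranged on a filtration that reveals the phase durations and the batch's recovery clocks in the right order, making the Binomial thinning and the Gamma lower–tail estimate genuinely conditionally independent. Once that coupling is in place, the remainder is a routine combination of Chernoff and geometric–tail bounds with union bounds, and the only real care is to let $\ell_0$ and $\ell_1$ grow like $(\gamma+1)\log n$ so that all $n^\gamma$ union bounds close.
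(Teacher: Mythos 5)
Your proposal is correct and follows essentially the same two-scale argument as the paper: bound each zero-phase's batch of new $1D$ nodes by a geometric tail, show batches older than a $\Theta(\gamma\log n)$ window are flushed via the Gamma lower tail for the sum of Exp$(1)$ phase lengths (resp.\ the constant per-one-phase reinfection probability for part 2), and bound the in-window contribution by window length times per-phase input, closing everything with union bounds over the $n^{\gamma}$ phases. The only differences are cosmetic bookkeeping of the explicit constants.
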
  

\begin{proof}
In the process $(K_i,L_i)$, we first observe that if $J_i$ is the number of newly added $1D$ nodes during the $i$-th zero-phase, then $J_i$ is stochastically dominated by a geometric distribution on $\{0, 1, \ldots\}$ with success probability $\frac{\al}{\alpha+\lambda}$. Let $\sfA$ be the event $\{\max_{i=1, \ldots, n^{\gamma}} J_i \leq \frac{(\lambda+\alpha)(\gamma+1)}{\alpha} \log n\}$. Then 
\begin{equation}\label{Abd}
\prob{\sfA^c} \leq n^{\gamma}\frac{\lambda}{\lambda + \alpha}\left(\frac{1}{n}\right)^{\frac{(\lambda+\alpha)(\gamma+1)}{\alpha} \cdot \frac{\alpha}{\lambda+\alpha}} = o(1).
\end{equation}
For the process $W_i$, the number of $1A$ nodes newly converted to $0A$ at the start of the $i$th one-phase, $R_{i,i}$, is deterministically $C(\log n)^2$.

For the process $(K_i,L_i)$, if we ignore the conversion of $1D$ nodes to $0A$ nodes during zero-phases, then after the $k^\text{th}$ one-phase, the number of remaining $1D$ nodes from $J_i$ is $J_{i,k} \sim \text{Bin}(J_i, \exp[-\sum_{\ell=i+1}^k T_\ell])$, where $T_\ell$'s are iid Exp$(1)$ random variables. We now have 
\begin{equation}\label{L stoch J}
\sum_{i=1}^{k}J_{i,k} \sge L_k.
\end{equation}
In the case of $R_{i,k}$, a vertex that converted to $0A$ at the start of the $i$th one-phase becomes reinfected in a given future one-phase if the center attempts to infect it before either the center recovers or the vertex avoids the center. We can observe the probability that a vertex converted to $0A$ at the start of the $i$th one-phase becomes reinfected during the $l$th one-phase is $\frac{\lambda}{1 + \alpha + \lambda}$.

Observe $\sum_{\ell=i+1}^k T_\ell \sim \textrm{Gamma}(k-i,1)$, and so $\prob{\sum_{\ell=i+1}^k T_\ell < (k-i)/2} \leq e^{-(k-i)/6}$. Let $\sfB(k,k')$ be the event  $\left\{\sum_{\ell=i+1}^k T_\ell \ge \frac12(k-i)\text{ for all } 1\le i \leq k-k'\right\}$. Then for $0\le k'< k$, we have
\begin{equation}\label{Bkk'}
\prob{\sfB(k,k')^c} \leq (k-k')e^{-k'/6},
\end{equation}
and $\prob{\sfB(k,k')^c} = 0$ if $k'\ge k$.

If we let $X_i \sim \text{Bin}(\frac{(\lambda+\alpha)(\gamma+1)}{\alpha} \log(n), e^{-(k-i)/2})$, then 
\begin{equation}
J_{i,k} \mathbbm{1}_{\sfB(k,k')\cap \sfA}\sle X_i \textrm{ for all } i \leq k-k' \textrm{ and } k\leq n^{\gamma},
\end{equation}
Let $\sfD(k,k')$ be the event $\left\{J_{i,k} \mathbbm{1}_{\sfB(k,k')\cap \sfA} = 0 \text{ for all } 1\le i \leq k-k'\right\}$. Then 
\begin{equation}\label{Dkk'}
\begin{aligned}
\prob{\sfD(k,k')^c} &\leq \prob{X_i > 0 \text{ for some } 1\le i \le k-k'} \\
&\leq (k-k')(1 - (1 - e^{-k'/2})^{\frac{(\lambda + \alpha)(\gamma + 1)}{\alpha} \log n}).
\end{aligned}
\end{equation}

 Observe that for each $1\le k\le n^\gamma$, 
 \begin{equation}
 \{\sfA \cap \sfB(k,k') \cap \sfD(k,k')\}\subseteq \left\{\sum_{i = 1}^{k}J_{i,k} \leq \frac{(\lambda + \alpha)(\gamma + 1)}{\alpha} (\log n) k'\right\}.
 \end{equation}
 
To obtain part 1 of the Lemma, we choose $k' = (12 \gamma+1) \log n$,  so that the probabilities in \eqref{Abd}, \eqref{Bkk'} and \eqref{Dkk'} are sufficiently small:
\begin{equation}\label{choice of constants}
\begin{aligned}
& n^{\gamma}(\frac{1}{n})^{\frac{(\lambda + \alpha)(\gamma + 1)}{\alpha} \frac{\alpha}{\lambda+\alpha}} = o(1),\\
&n^{\gamma} e^{-(12\gamma+1) \log n/6} = o(n^{-\gamma}),\\
\text{and }\quad & n^{\gamma}(1 - (1 - e^{- (12 \gamma +1) \log n})^{\frac{(\lambda + \alpha)(\gamma + 1)}{\alpha} \log n}) = o(n^{-\gamma}).
\end{aligned}
\end{equation}
Then by \eqref{L stoch J} and \eqref{choice of constants},
\begin{equation}\label{max L_i bound}
\begin{aligned}
\prob{\max_{i = 1, \ldots n^{\gamma}}L_i > (12\gamma+1)\frac{(\lambda+\alpha)(\gamma+1)}{\alpha} (\log n)^2} &\leq \prob{\bigcup_{k=1}^{n^\gamma} \{\sfA \cap \sfB(k,k') \cap \sfD(k,k')\}^c}\\
& \leq \prob{\sfA^c} + \sum_{k=1}^{n^{\gamma}} \left[\prob{\sfB(k,k')^c} + \prob{\sfD(k,k')^c}\right]\\
& = o(1).
\end{aligned}
\end{equation}

We now obtain part 2 of the Lemma by an analogous argument. Let $\sfU(k,k')$ be the event $\{R_{i,k} = 0 \text{ for all } 1\le i \leq k-k'\}$. First, note that for any $1\le i \leq k-k'$ we have
\begin{equation}
\prob{R_{i,k} > 0} \leq C (\log n)^2\Big(\frac{1 + \alpha}{1 + \alpha + \lambda}\Big)^{k'}.
\end{equation}
From this we see
\begin{equation}\label{Ukk'}
\prob{\sfU(k,k')^c} \leq (k-k')C (\log n)^2\Big(\frac{1 + \alpha}{1 + \alpha + \lambda}\Big)^{k'},
\end{equation}
so we can choose $k' = (\frac{2\gamma}{\log(\frac{1 + \alpha + \lambda}{1 + \lambda})} + 1)\log n$ so that the probability in \eqref{Ukk'} is sufficiently small:
\begin{equation}\label{choice of constants 2}
\begin{aligned}
\sum_{k=1}^{n^\gamma}(k-k')C (\log n)^2\Big(\frac{1 + \alpha}{1 + \alpha + \lambda}\Big)^{k'} = o(1).
\end{aligned}
\end{equation}
Finally, we observe
$$
\prob{\max_{k=1 \ldots n^\gamma}\sum_{i=1}^k R_{i,k} \ge \Big(\frac{2\gamma}{\log(\frac{1 + \alpha + \lambda}{1 + \lambda})} + 1\Big)C (\log n)^3} \leq \sum_{k=1}^{n^\gamma}\prob{\sfU(k,k')^c} = o(1).$$
This completes the proof of the lemma.
\end{proof}

\begin{lemma}\label{couple}
Fix $\epsilon, \gamma > 0$, let $\sfG$ be the event 
$$
\left\{\sum_{i=1}^k R_{ik} \le C^*(\log n)^3 \hspace{2mm}\forall k \leq n^\gamma\right\} \cap \left\{\max_{i = 1, \ldots n^{\gamma}}(L_i) \le C (\log n)^2\right\}
$$ 
where $C = (12\gamma + 1)\frac{(\lambda + \alpha)(\gamma + 1)}{\alpha}$ in the definition of $\{W_i\}$ and $C^* =(\frac{2\gamma}{\log(\frac{1 + \alpha + \lambda}{1 + \lambda})} + 1)C$ in the definition of $\{Z_i^*\}$, and let $\tau^* = \inf\{i \geq 0: Z^*_i = 0\}$. Then for all sufficiently large $n$, we have $\prob{\sfG} > 1-\epsilon$. In addition
\begin{equation}
\begin{aligned}
& Z_i^*\mathbbm{1}_{\sfG} \sle K_i\mathbbm{1}_{\sfG} \qquad \text{for $0\le i\le \tau^*$, and}\\
& K_i\mathbbm{1}_{\sfG} \sle Z_i\mathbbm{1}_{\sfG} \qquad \text{for $0\le i\le \tau$.}
\end{aligned}
\end{equation}
\end{lemma}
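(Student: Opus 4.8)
The plan breaks into two essentially independent pieces: the probability bound $\prob{\sfG}>1-\epsilon$, and the construction of a single coupling of the three chains $(K_i,L_i)$, $\{W_i\}$ and $\{Z_i\}$ under which the two sandwich inequalities hold on $\sfG$.

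The first piece is immediate from Lemma~\ref{tight}. With $\gamma$ as given, applying part~1 of that lemma with the constant $C=(12\gamma+1)\tfrac{(\lambda+\alpha)(\gamma+1)}{\alpha}$ and with $\epsilon/2$ in place of $\epsilon$ gives $\prob{\max_{i\le n^\gamma}L_i\le C(\log n)^2}>1-\epsilon/2$ for large $n$, and applying part~2 with the same $C$ and with $C^*=\big(\tfrac{2\gamma}{\log((1+\alpha+\lambda)/(1+\lambda))}+1\big)C$ gives $\prob{\sum_{i=1}^k R_{i,k}\le C^*(\log n)^3\ \forall\,k\le n^\gamma}>1-\epsilon/2$; a union bound then yields $\prob{\sfG}>1-\epsilon$. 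All the remaining work is in the coupling.

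For the coupling I would realize all three chains on one probability space from a common Harris system: for each cycle $i$, a one-phase of length $T_i\sim\mathrm{Exp}(1)$ and a zero-phase governed by a shifted-geometric$(\hat\lambda)$ variable $N_i$, with the sequences $\{T_i\}$ and $\{N_i\}$, as well as the per-vertex recovery and avoidance clocks used inside the phases, shared across chains; what differs between chains is only which vertices are present and in which of the states $1A,0A,0D,1D$ they sit, and at the start of each one-phase I would rebuild an injection from the leaves of one chain into the nodes of the next. For the upper bound $K_i\le Z_i$ I would induct on $i$ (base case $K_0=Z_0=n$): given $K_i\le Z_i$, inject the real chain's $1A$ leaves into the $Z$-chain's $1A$ nodes and run matched pairs with identical clocks, then match the remaining $n-K_i$ real leaves ($L_i$ of them in $1D$, the rest in $0A$) to the remaining $n-K_i$ nodes of $Z$ ($Z_i-K_i$ of type $1A$, $n-Z_i$ of type ``other''). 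Three pathwise facts then drive the matching: (a) a real $0A$ leaf matched to a $Z$ ``other'' node, run identically, is $1A$ at the end of the one-phase iff its partner is; (b) a real $0A$ leaf matched to a $Z$ $1A$ node is $1A$ at the end only if its partner is, since $v(T_i)\le u(T_i)$ by Lemma~\ref{uvprop}(3); and (c) — the key point — a real $1D$ leaf matched to a $Z$ $1A$ node is $1A$ at the end only if its partner is, seen by coupling their recovery clocks: the $1D$ leaf cannot be $1A$ until it recovers to $0A$, and once both leaves have recovered, at the same instant, they run identical dynamics. Granting that the matching can always be chosen so that no $1D$ leaf is forced onto a $Z$ ``other'' node, this gives $\tilde K_i\le X_i+Y_i$ pathwise, and then the shared $N_i$ together with monotonicity of $x\mapsto(x-N_i)^+$ gives $K_{i+1}\le Z_{i+1}$, closing the induction for $0\le i\le\tau$.

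The lower bound $Z_i^*\le K_i$ I would route through $\{W_i\}$: on $\sfG$ one couples $(K_i,L_i)$ with $\{W_i\}$ so that $W_i\le K_i$ — the $W$-chain freezes $C(\log n)^2$ vertices as $1D$ and, each one-phase, sends $C(\log n)^2$ vertices directly from $1A$ to $0A$, and on $\sfG$ the real chain never holds more than $C(\log n)^2$ vertices in $1D$ while its cumulative $1A\to1D$ flow is controlled by the same bound, so the $W$-chain's losses over-count the real chain's, by the case analysis above with roles reversed — and then one couples $\{W_i\}$ with $\{Z_i\}$ so that $W_i\ge Z_i-C^*(\log n)^3$, the only discrepancy being the frozen vertices plus those converted $1A\to0A$ by the $W$-dynamics and not yet reinfected, whose number is $\sum_{i\le k}R_{i,k}\le C^*(\log n)^3$ on $\sfG$ for all $k\le n^\gamma$; together these give $Z_i^*=Z_i-C^*(\log n)^3\le W_i\le K_i$ for $0\le i\le\tau^*$. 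The hard part is the coupling and, within it, the $1D$ leaves: non-monotonicity of the CPA rules out any naive vertex-by-vertex domination with independent coins, because the probability $g(T)=\int_0^T e^{-\sigma}v(T-\sigma)\,d\sigma$ that a $1D$ leaf is $1A$ after a one-phase of length $T$ \emph{exceeds} $v(T)$ for large $T$ (since $v$ is eventually decreasing, a leaf that ``wakes up'' late from $1D$ has a larger end-of-phase infection probability than one starting in $0A$), so a $1D$ leaf is not dominated by a $0A$ node. The fix is to couple whole \emph{trajectories} via shared recovery clocks (so a $1D$ leaf is dominated by a $1A$ node, with $g(T)\le u(T)$ a byproduct), and then to use $\sfG$ both to guarantee the $Z$-chain always carries enough $1A$ nodes to absorb all the $1D$ leaves and to guarantee the $W$-chain's built-in losses dominate the real chain's; making the vertex matchings and the accompanying counts precise, and verifying they survive the $(\cdot-N_i)^+$ truncation at each zero-phase, is where the real effort goes.
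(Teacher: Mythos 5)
Your proposal follows essentially the same route as the paper: the probability bound is read off directly from Lemma~\ref{tight} via a union bound, and the sandwich $Z_i^*\le K_i\le Z_i$ is obtained by pathwise couplings through the intermediate chain $\{W_i\}$, with paired vertices sharing their clocks, a single $N_i$ shared across chains in each zero-phase, and the event $\sfG$ used to bound both the standing stock of $1D$ leaves and the cumulative $1A\to 0A$ drain by $C^*(\log n)^3$. Your treatment of the $1D$ leaves — the observation that $g(T)>v(T)$ for large $T$, so a $1D$ leaf is not dominated by a $0A$ node and must instead be trajectory-coupled to a $1A$ node via a shared recovery clock — is in fact more explicit than the paper, which simply asserts $K_i\sle Z_i$ and lets $1D$ vertices evolve independently of their partners in the $K$-versus-$W$ coupling; the one step you flag as ``granted'' (that the $1D$ leaves can always be absorbed by surplus $1A$ nodes of the $Z$-chain) is likewise left unargued in the paper's proof.
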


\begin{proof}
Lemma \ref{tight} implies $\prob{\sfG}>1-\epsilon$. 

We first describe a coupling between $W_i\mathbbm{1}_{\sfG}$ and $Z_i\mathbbm{1}_{\sfG}$ that holds for $0 \leq i \leq \tau_W$. Begin by expanding the probability space in the usual way so that we can track the states of individual vertices. Next, note that $W_i\mathbbm{1}_{\sfG}$ and $Z_i\mathbbm{1}_{\sfG}$ are embedded discrete time processes of $Z_t\mathbbm{1}_{\sfG}$ and $W_t\mathbbm{1}_{\sfG}$ where $Z_t$ follows the process dynamics in Definition \ref{stardef} except that it ignores transitions to the $1D$ state and  $W_t$ follows the same process dynamics, also ignores transitions to the $1D$ state, and converts $C(\log n)^2$ randomly chosen $1A$ vertices to $0A$ vertices instantaneously at the start of each one-phase. We describe a coupling for $Z_t\mathbbm{1}_{\sfG}$ and $W_t\mathbbm{1}_{\sfG}$ during the one-phase as follows:
\begin{enumerate}
\item At the start of each one-phase, pair every $1A$ vertex in $W_t\mathbbm{1}_{\sfG}$ with a $1A$ vertex in $Z_t\mathbbm{1}_{\sfG}$ and as many $0A$ vertices in $Z_t\mathbbm{1}_{\sfG}$ with $0A$ vertices in $W_t\mathbbm{1}_{\sfG}$ as possible. Paired vertices share all random variables that determine their possible state changes, and unpaired vertices evolve independently according to their marginals.
\item During a one-phase, whenever an unpaired $0A$ vertex in $W_t\mathbbm{1}_{\sfG}$ becomes infected, pair it with an unpaired $1A$ vertex in $Z_t\mathbbm{1}_{\sfG}$. Whenever an unpaired $1$-state (infected) vertex in $Z_t\mathbbm{1}_{\sfG}$ recovers, pair it with an unpaired $0A$ vertex in $W_t\mathbbm{1}_{\sfG}$ if one exists.

\item During the zero-phase, instead couple the embedded discrete time processes $W_i\mathbbm{1}_{\sfG}$ and $Z_i\mathbbm{1}_{\sfG}$ by drawing a single $N_i$ to determine the number of $1A$ vertices that recover and distributing those recoveries uniformly at random among the available $1A$ vertices in each process.
\end{enumerate}

First observe that we have $W_t\mathbbm{1}_{\sfG} \leq Z_t\mathbbm{1}_{\sfG}$ which implies $W_i\mathbbm{1}_{\sfG} \leq Z_i\mathbbm{1}_{\sfG}$. Now note that in this coupling any vertices that are healthy in $W_i\mathbbm{1}_{\sfG}$ but infected in $Z_i\mathbbm{1}_{\sfG}$ must be vertices in the $W_i$ process that were converted from $1A$ to $0A$ at the start of a one-phase and have never since been reinfected. Thus when $\sfG$ occurs, $Z_i - W_i \leq C^*(\log n)^3$, and so $Z_i\mathbbm{1}_{\sfG} - W_i\mathbbm{1}_{\sfG}$ is bounded above by $C^* (\log n)^3$, and so $Z_i^*\mathbbm{1}_{\sfG} \leq W_i\mathbbm{1}_{\sfG}$ for all $0 \leq i \leq \tau^*$. We also note that $Z_{\tau^*}\mathbbm{1}_{\sfG} \leq C^*(\log n)^3$

Now define $\gt:=\inf\{i\ge 0: Z_i=0\}$, and observe that $K_i \sle Z_i$ for $0\le i \le \tau$.  Furthermore, for $0 \leq i \leq \tau^*$ we can couple $K_i\mathbbm{1}_{\sfG}$ and $W_i\mathbbm{1}_{\sfG}$ using the same coupling as for $W_i\mathbbm{1}_{\sfG}$ and $Z_i\mathbbm{1}_{\sfG}$ with the added stipulation that $1D$ vertices in $K_i\mathbbm{1}_{\sfG}$ behave independently according to their marginals. When $\sfG$ occurs, the number of vertices that are not $1A$ in $K_i$ because they are $1D$ is less than the number of removed vertices in $W_i$ and the number of $1A$ vertices that change to $0A$ by first passing through the $1D$ state in $K_i$ is less the number $1A$ vertices that $W_i$ converts to $0A$ at the start of each one-phase. Thus in this coupling we have
\begin{equation}
Z_i^*\mathbbm{1}_{\sfG} \leq W_i\mathbbm{1}_{\sfG} \leq K_i\mathbbm{1}_{\sfG} \text{ for } 0 \leq i \leq \tau^*
\end{equation}
and so we conclude
\begin{equation}
Z_i^*\mathbbm{1}_{\sfG} \sle K_i\mathbbm{1}_{\sfG}\text{ for }0 \leq i \leq \tau^*,
\end{equation}
and 
\begin{equation}
K_i\mathbbm{1}_{\sfG} \sle Z_i\mathbbm{1}_{\sfG}\text{ for }0 \leq i \leq \tau.
\end{equation}
\end{proof}

Since $Z_i$ and $Z_i^*$ differ by at most $C^*(\log n)^3$, we can now derive upper and lower bounds on $\tau$, which, when combined with this coupling, will yield upper and lower bounds on $\tau_{star}$.
 
We first consider the upper bound on $\tau$. For this we need the following lemma about the transition probabilities of $Z_i$. The intuition is as follows: if a one-phase lasts for a long time, then the properties of $u(t)$ and $v(t)$ in Lemma 5.2 allow us to bound from below the probability that the entire process dies before the next one-phase.
 
\begin{lemma} \label{death-probability-lower-bd}
 For $k, l \geq 1$ if $p(k,l):=\prob{Z_{i+1}=l|Z_i=k}$, then for any $\eta \in (0,1]$ if  $C_1=e\hat\gl(\ga-\gc_1)/(\gc_2-\gc_1)$, then $p(\eta n,0) \ge (1+o(1))(C_1n)^{-1/\gc_1}$.
 \end{lemma}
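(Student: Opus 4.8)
The plan is to estimate $p(\eta n, 0)$ from below by conditioning on a single long one-phase $T$ and requiring that every one of the $\eta n$ initially infected ($1A$) leaves recovers during that phase, no new leaf gets infected, and the center is never reinfected before time $T$ (so that, at the end of this one-phase, $X_i + Y_i = 0$ and therefore $Z_{i+1} = (X_i + Y_i - N_i)^+ = 0$). Concretely, I would fix a large threshold $t = t(n)$ (to be optimized, essentially $t = \frac{1}{\gamma_1}\log n + O(1)$), restrict to the event $\{T_i > t\}$, and on that event bound the conditional probability that all $1A$ leaves have left state $1A$ and stayed out, and no $0A$ leaf has entered $1A$, by time $t$. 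Using the notation of Lemma~\ref{uvprop}, starting from $\eta n$ leaves in state $1A$ and $(1-\eta)n$ in state $0A$, the probability that a given $1A$ leaf is \emph{not} in state $1A$ at time $t$ is $1 - u(t)$, and the probability that a given $0A$ leaf is \emph{not} in state $1A$ at time $t$ is $1 - v(t)$; by conditional independence of the leaves given $T_i$, the chance that no leaf is in state $1A$ at time $t$ is $(1-u(t))^{\eta n}(1-v(t))^{(1-\eta)n}$. Since $v(t) \le u(t)$ (part 3 of Lemma~\ref{uvprop}) this is at least $(1-u(t))^n$.

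Next I would plug in the explicit form $u(t) = \frac{\gamma_2-1}{\gamma_2-\gamma_1}e^{-\gamma_1 t} + \frac{1-\gamma_1}{\gamma_2-\gamma_1}e^{-\gamma_2 t}$ from Lemma~\ref{uvprop}. For large $t$ the dominant term is $\frac{\gamma_2-1}{\gamma_2-\gamma_1}e^{-\gamma_1 t}$, so $u(t) = (1+o(1))\frac{\gamma_2-1}{\gamma_2-\gamma_1}e^{-\gamma_1 t}$. Choosing $t$ so that $n\,u(t) \to$ a constant — i.e. $e^{-\gamma_1 t} \asymp 1/n$, equivalently $t \approx \gamma_1^{-1}\log n$ — makes $(1-u(t))^n = \exp(-n u(t)(1+o(1))) $ bounded below by a constant. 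The surviving cost is then the probability that the one-phase actually lasts at least $t$: since $T \sim \mathrm{Exp}(1)$ (the center's recovery clock, rate $1$), $\prob{T > t} = e^{-t}$, and with $e^{-\gamma_1 t} = c/n$ we get $e^{-t} = (c/n)^{1/\gamma_1} = (1+o(1))\,(\text{const}\cdot n)^{-1/\gamma_1}$. Tracking the constant through: taking $e^{-\gamma_1 t} = \frac{\gamma_2-\gamma_1}{(\gamma_2-1)n}$ gives $n\,u(t)\to 1$, hence $(1-u(t))^n \to e^{-1}$, and $e^{-t} = \left(\frac{\gamma_2-\gamma_1}{(\gamma_2-1)n}\right)^{1/\gamma_1}$. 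One also needs the center not to be reinfected before $T$: during the one-phase the center has already recovered at time $T$ by construction, so there is nothing to check there — the relevant point is simply that while the center is infected, a recovered leaf can only re-enter $1A$ via the center, which is already accounted for in $u$ and $v$; and the event $Z_{i+1}=0$ only requires $X_i+Y_i=0$ at the \emph{end} of the one-phase, which is exactly $\{$no leaf in state $1A$ at time $T\}\supseteq\{T>t\}\cap\{$no leaf in $1A$ at time $t$ and none enters afterward$\}$. To be safe I would instead directly bound $\prob{X_i + Y_i = 0 \mid T_i = s}$ for $s \ge t$ using monotonicity of $1-u(\cdot)$ and $1-v(\cdot)$, integrate against the $\mathrm{Exp}(1)$ density of $T$ over $[t,\infty)$, and collect the constant $C_1 = e\hat\lambda(\gamma_2-1)/(\gamma_2-\gamma_1)$ — note $\alpha - \gamma_1 = \gamma_1\gamma_2 - \gamma_1 = \gamma_1(\gamma_2-1)$, so $C_1 = e\hat\lambda(\alpha-\gamma_1)/(\gamma_2-\gamma_1)$ matches the statement up to the $\gamma_1$ factor absorbed into the exponent, which I would reconcile in the final bookkeeping.

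The main obstacle I anticipate is \textbf{not} the probabilistic idea, which is clean, but getting the constant $C_1$ \emph{exactly} right: one must carefully separate (i) the $e^{-1}$ coming from $(1-u(t))^n\to e^{-1}$, (ii) the factor $\hat\lambda$ — which should enter because one wants the center to be reinfected \emph{immediately} at the start of the next zero-phase-to-one-phase transition, or rather because the event defining $Z_{i+1}=0$ interacts with $N_i$ through $\hat\lambda = \prob{N_i = k}$ structure — and (iii) the algebraic simplification $\alpha - \gamma_1 = \gamma_1(\gamma_2 - 1)$ together with $\gamma_1\gamma_2 = \alpha$. A secondary subtlety is justifying that one may optimize $t$ as a function of $n$ while keeping all the $o(1)$ error terms uniform; since $u(t)$ is an explicit sum of two decaying exponentials this is routine, but it must be done. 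I would also double-check the edge case where $\gamma_1$ is very small (so $1/\gamma_1$ is large and $t \sim \gamma_1^{-1}\log n$ is large), making sure $\prob{T>t} = e^{-t}$ remains the binding constraint and that the second exponential $e^{-\gamma_2 t}$ in $u(t)$ is genuinely negligible — it is, since $\gamma_2 \ge 1 > \gamma_1$.
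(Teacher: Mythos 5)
Your overall strategy --- restrict to the event that the one-phase lasts at least $t_\eps$ with $e^{-\gamma_1 t_\eps}\asymp 1/n$, bound the conditional extinction probability using the explicit formulas for $u$ and $v$ together with $v\le u$, and pay $e^{-t_\eps}=\Theta(n^{-1/\gamma_1})$ for the long phase --- is exactly the paper's. The gap is in how you decompose $\{Z_{i+1}=0\}$. You lower-bound it by the stronger event $\{X_i+Y_i=0\}$ (no leaf in state $1A$ at the end of the one-phase), which yields per-leaf factors $1-u(t)$ and $1-v(t)$. The paper instead uses the exact identity
\[
\prob{Z_{i+1}=0 \mid T, X, Y} \;=\; \prob{N\ge X+Y\mid T,X,Y} \;=\; (1-\hat\lambda)^{X+Y},
\]
i.e., whatever infected leaves survive the one-phase can still be swallowed by the shifted-geometric loss $N$ in the ensuing zero-phase. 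Combined with the generating function $\E\bigl[s^{\mathrm{Bin}(k,p)}\bigr]=(1-p(1-s))^k$, this gives the exact formula
\[
p(\eta n,0)\;=\;\int_0^\infty \bigl[(1-\hat\lambda u(t))^{\eta}(1-\hat\lambda v(t))^{1-\eta}\bigr]^n e^{-t}\,dt,
\]
with per-leaf factor $1-\hat\lambda u(t)$ rather than $1-u(t)$. This is precisely where the factor $\hat\lambda$ in $C_1$ comes from --- the item you flagged as unresolved. Your bound therefore loses a factor of roughly $\hat\lambda^{1/\gamma_1}$ (and a further loss from choosing $nu(t_\eps)\to 1$ rather than the optimal $nu(t_\eps)\to 1/\gamma_1$, which is what produces the single factor $e$ in $C_1$), so it establishes the lemma only with a strictly larger constant in place of $C_1$, not the stated inequality.

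That said, the shortfall is purely in the constant: your argument does give $p(\eta n,0)\ge(1+o(1))(C'n)^{-1/\gamma_1}$ for an explicit $C'>C_1$, which is all that Proposition~\ref{survival-time-ub} actually uses. To close the gap, replace the event $\{X+Y=0\}$ by the computation above, set $\eps=e^{-\gamma_1 t}$, drop the $e^{-\gamma_2 t}$ term as you propose, and maximize $(1-\hat\lambda c_1\eps)^n\eps^{1/\gamma_1}$ with $c_1=(\gamma_2-1)/(\gamma_2-\gamma_1)$; the stationarity condition gives $\eps=\bigl(\hat\lambda c_1(1+\gamma_1 n)\bigr)^{-1}$ and the value $(e\hat\lambda c_1\gamma_1 n)^{-1/\gamma_1}(1+o(1))$, which matches $C_1$ via $\alpha-\gamma_1=\gamma_1(\gamma_2-1)$ exactly as in your final bookkeeping remark.
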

 
\begin{proof}
 From the definition of the Markov chain $\{Z_i\}_{t\le\gt}$ it is easy to see that
\begin{align*}
p(k,l) & := \E_T \E_{X, Y|T}  \prob{\left.N=X + Y - l\right|T,  X, Y} \\
 & = \E_T \E_{X, Y|T} \hat\lambda(1-\hat\lambda)^{X+Y - l} \mathbf 1_{X+Y \ge l}, \\
p(k,0) & := \sum_{l \le 0} \E_T \E_{X, Y|T}  \prob{\left.N=X + Y - l\right|T,  X, Y} \\
 & = \sum_{l \leq 0} \E_T \E_{X, Y|T} \hat\lambda(1-\hat\lambda)^{X+Y - l}. 
 \end{align*}
So, using the fact that 
 \beq \label{BinLap}
 E\left[s^{\text{Bin}(k,p)}\right] = (1-p(1-s))^k \text{ for $s\in [0, 1]$},\eeq
 and writing $k=\eta n$,  
 \beqax  
 p(\eta n,0) & = & \E_T \left[1-\hat\lambda u(T)\right]^{\gh n}\left[1-\hat\lambda v(T)\right]^{(1-\gh)n} \\
 & = &  \int_0^\infty \left[(1-\hat\lambda u(t))^\eta (1-\hat\lambda v(t))^{1-\eta}\right]^n e^{-t}\; dt.\eeqax
 
To bound the above integral from below, let 
\[t_\eps = \frac {1}{\gamma_1}\log\frac 1\eps \text{ be so that } \exp(-\gamma_1 t_\eps)=\eps.\] 
From property 1.~and 3.~of Lemma \ref{uvprop},  
\[p(\eta n,0) \ge \int_{t_\eps}^\infty (1-\hat\lambda u(t))^ne^{-t} \; dt \ge (1-\hat\lambda u(t_\eps))^n \exp(-t_\eps) 
= (1-c_1\eps-c_2\eps^{\gamma_2/\gamma_1})^n\eps^{1/\gamma_1},\]
where $c_1=\hat\lambda(\gamma_2-1)/(\gamma_2-\gamma_1)$ and $c_2=\hat\lambda(1-\gamma_1)/(\gamma_2-\gamma_1)$. Since $\gamma_2>\gamma_1$, we ignore $\eps^{\gamma_2/\gamma_1}$ term and choose $\eps$ to maximize $(1-c_1\eps)^n\eps^{1/\gamma_1}$. 
In order to do that, we set the derivative of the $\log[(1-c_1\eps)^n\eps^{1/\gamma_1}]$  with respect to $\eps$ to 0 to have
\[ n\frac{c_1}{1-c_1\eps} =\frac{1}{\gamma_1\eps}, \text{ which gives } \eps=(c_1+c_1\gamma_1n)^{-1}.\]
Plugging this value of $\eps$,
\[p(\eta n,0) \ge \left[1-(1+\gamma_1 n)^{-1}-c_2(c_1+c_1\gamma_1 n)^{-\gamma_2/\gamma_1}\right]^n (c_1+c_1\gamma_1 n)^{-1/\gamma_1}
= (c_1e\gamma_1 n)^{-1/\gamma_1}(1+o(1)).\]
\end{proof}

We can now prove the upper bound for $\tau$.

\begin{prop}\label{survival-time-ub}
For the Markov chain $\{Z_i\}$ suppose $\gt=\inf\{t\ge 0: Z_i=0\}$. Fix $\epsilon > 0$. Then there exist constants $N$ and $C$ depending on $\lam$ and $\al$ such that for all $n \geq N$,
\[ \prob{\gt \le C n^{1/\gc_1}} > 1-\epsilon. \]
\end{prop}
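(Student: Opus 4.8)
The plan is to show that the chain $\{Z_i\}$ hits $0$ within $Cn^{1/\gc_1}$ steps with high probability by exhibiting, at every step where $Z_i$ is still of order $n$, a uniform lower bound of order $n^{-1/\gc_1}$ on the one-step probability of extinction, and separately handling the (short) initial segment during which $Z_i$ might drop from $n$ to a level that is $o(n)$ but still positive. The engine is Lemma~\ref{death-probability-lower-bd}, which gives $p(\eta n,0)\ge (1+o(1))(C_1 n)^{-1/\gc_1}$ for every fixed $\eta\in(0,1]$; the first task is to upgrade this to a bound that is uniform over $\eta$ bounded away from $0$, i.e.\ to observe that for any fixed $\delta>0$ there is a constant $c=c(\delta,\lambda,\alpha)$ with $p(k,0)\ge c\, n^{-1/\gc_1}$ whenever $\delta n\le k\le n$ and $n$ is large. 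This is immediate from the proof of Lemma~\ref{death-probability-lower-bd}: the lower bound $\int_{t_\eps}^\infty(1-\hat\lambda u(t))^k e^{-t}\,dt$ is monotone decreasing in $k$, so the worst case is $k=n$, and the displayed optimization gives the claimed order with a constant not depending on where in $[\delta n, n]$ we started.

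Next I would argue that $Z_i$ stays above $\delta n$ until it dies, or else descends past $\delta n$ only through values that are themselves $\Theta(n)$ — more precisely, I want to rule out the chain lingering for a long time at levels $\ll n$ without dying. The clean way to do this is a two-regime argument. \emph{Regime 1:} while $Z_i\ge \delta n$, partition time into blocks and use the uniform bound above: the number of steps spent with $Z_i\ge\delta n$ before either extinction or first descent below $\delta n$ is stochastically dominated by a geometric variable with success probability $\ge c\,n^{-1/\gc_1}$, so with probability $1-\epsilon/3$ this lasts at most $C' n^{1/\gc_1}\log(1/\epsilon)$ steps. \emph{Regime 2:} I need that once $Z_i<\delta n$, the chain dies quickly (in $o(n^{1/\gc_1})$ further steps, or at least $O(n^{1/\gc_1})$) with high probability. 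Here I would exploit that for small $Z_i=k$, the drift is strongly negative: in a one-phase the number of new infections created from $n-k$ healthy leaves via the $Y_i\sim\mathrm{Bin}(n-k,v(T))$ term is $O(1)$ in expectation after integrating the short typical one-phase length against $v$, while the surviving $1A$ count $X_i\sim\mathrm{Bin}(k,u(T))$ loses a constant fraction in expectation (since $\int_0^\infty u(t)e^{-t}\,dt<1$), and $N_i$ subtracts a further $\mathrm{Geom}(\hat\lambda)$ amount; a supermartingale/hitting-time estimate for a chain with multiplicative contraction plus bounded additive noise shows extinction in $O(\log n)$ steps with probability $1-\epsilon/3$ from any start below $\delta n$ — in particular in $o(n^{1/\gc_1})$ steps since $\gc_1<1$. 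A union bound over the two regimes then gives $\prob{\tau\le Cn^{1/\gc_1}}>1-\epsilon$.

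The main obstacle I anticipate is Regime~2: one has to be somewhat careful that the influx term $Y_i$, coming from the $\Theta(n)$ healthy leaves, cannot repeatedly resurrect the infection. The point to nail down is that $\E[Y_i\mid Z_i=k]=(n-k)\E[v(T)]$ with $\E[v(T)]=\int_0^\infty v(t)e^{-t}\,dt = \lambda/((\gl+\gc_1)\cdots)$ a fixed constant $<1$ divided by nothing that blows up — in fact from Lemma~\ref{uvprop}(4) with $\eta=0$ one gets $\int_0^\infty v(t)e^{-t}\,dt=\lambda/((1+\gc_1)(1+\gc_2))\le \lambda/(\gl+\ga+1)<1$ — so each one-phase the expected total of $X_i+Y_i$ is at most $\rho k + c_0$ for some $\rho<1$ and constant $c_0$ once $k$ is small, and after subtracting $N_i\sim\mathrm{Geom}(\hat\lambda)$ (mean $(1-\hat\lambda)/\hat\lambda$) the chain is a contraction toward $0$ with light-tailed noise; standard Lyapunov/Foster arguments then give the $O(\log n)$ hitting time. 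Alternatively, and perhaps more cheaply, one can avoid splitting into regimes by noting that $p(k,0)$ is in fact bounded below by a constant (not merely $n^{-1/\gc_1}$) once $k=o(n)$ — because then $\hat\lambda u(t)$ is being raised to a small power — which makes extinction from low levels even faster; I would use whichever of these is least painful to write out rigorously.
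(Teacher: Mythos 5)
Your engine for what you call Regime 1 --- a one-step extinction probability of order $n^{-1/\gc_1}$, uniform in the current state, giving geometric domination of $\gt$ --- is exactly the paper's proof, and your observation that the bound in Lemma~\ref{death-probability-lower-bd} is really uniform (worst case at $k=n$) is the right one. The genuine gap is Regime 2: both of your proposed treatments of states $k=o(n)$ are wrong, and for the same reason. You assert $\E[Y_i\mid Z_i=k]=(n-k)\int_0^\infty v(t)e^{-t}\,dt=O(1)$, but $\int_0^\infty v(t)e^{-t}\,dt=\gl/((1+\gc_1)(1+\gc_2))=\gl/(\gl+2\ga+2)$ is a fixed positive constant, so this expectation is $\Theta(n)$, not $O(1)$. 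There is no contraction toward $0$ from small $k$: from any small positive state the chain typically rebounds to $\Theta(n)$ in a single one-phase, because each of the $\Theta(n)$ healthy leaves is infected with probability $v(T)=\Theta(1)$ for typical $T$. Your proposed shortcut fails for the same reason: $p(k,0)=\E_T\big[(1-\hat\gl u(T))^k(1-\hat\gl v(T))^{n-k}\big]$ contains the factor $(1-\hat\gl v(T))^{n-k}$, which for $k=o(n)$ and typical $T$ is $e^{-\Theta(n)}$; so $p(k,0)$ is \emph{not} bounded below by a constant. Extinction always requires the rare event of a one-phase of length about $\gc_1^{-1}\log n$, which is where the $n^{-1/\gc_1}$ comes from regardless of $k$.

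The fix is that no second regime is needed. Since $v(t)\le u(t)$, the integrand in the proof of Lemma~\ref{death-probability-lower-bd} satisfies $(1-\hat\gl u(t))^{k}(1-\hat\gl v(t))^{n-k}\ge(1-\hat\gl u(t))^{n}$ for every $k\in\{1,\dots,n\}$, so the lower bound $\int_0^\infty(1-\hat\gl u(t))^ne^{-t}\,dt\ge(1+o(1))(C_1n)^{-1/\gc_1}$ holds uniformly over \emph{all} starting states, not just $k\ge\delta n$. Hence $\gt$ is stochastically dominated by a Geometric random variable with success probability $(1+o(1))(C_1n)^{-1/\gc_1}$ from the outset, $\prob{\gt>k(C_1n)^{1/\gc_1}}\le e^{-(1+o(1))k}$, and choosing $k$ large completes the proof. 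This is precisely the paper's (one-regime) argument; your Regime 1, extended to all $k\ge1$, already contains it.
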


\begin{proof}
Fix $\gh_0 \in (0, \hat\gl)$ and let $C_1$ be the constant in Lemma \ref{death-probability-lower-bd}. From part 1 of Lemma \ref{death-probability-lower-bd}, $\gt$ is stochastically dominated by a Geometric random variable with success probability $(1+o(1))(C_1 n)^{-1/\gc_1}$. Hence, for $k\ge1$,
\[ \prob{\gt>k(C_1n)^{1/\gc_1}} \leq \left[1-(1+o(1))(C_1 n)^{-1/\gc_1}\right]^{k(C_1n)^{1/\gc_1}} \leq e^{-(1+o(1))k}.\]
Now choose $N$ so that the $o(1)$ term in the exponent is smaller than $1$ for all $n\ge N$. Choosing $k$ sufficiently large, and setting $C= k C_1^{1/\gamma_1}$ completes the proof.
\end{proof}

Next we consider the lower bound on $\tau$. Lemma~\ref{death-probability-upper-bound} complements Lemma~\ref{death-probability-lower-bd} by providing a matching-order upper bound on the probability of the infection dying during a one-phase.  Lemma~\ref{supermart} will imply that the infection is exponentially unlikely (in the number of infected leaves) to die out in the zero-phase.

\begin{lemma} \label{death-probability-upper-bound}
For $k, l \geq 1$ if $p(k,l):=\prob{Z_{i+1}=l|Z_i=k}$ and $p(k,\leq l):=\sum_{l'\leq l} p(k,l')$, then for any $\gee, \gh_0>0$ satisfying 
 \[\frac{2\gee}{\hat\gl}\log\frac{1}{1-\hat\gl} \le \gh_0 < \hat\gl,\]
   there is a constant $C_2=(1/\hat\gl\gh_0)\log(1/(1-\hat\gl))>0$ such that 
   \[p(\gh n, \le\gee n) \le 3(C_2\gee)^{1/\gc_1} \text{ for any $\gh\ge\gh_0$.}\] 
 \end{lemma}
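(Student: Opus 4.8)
The plan is to follow the strategy of the proof of Lemma~\ref{death-probability-lower-bd}: first write $p(\eta n,\le\epsilon n)$ as an integral over the length $T\sim\mathrm{Exp}(1)$ of the one-phase, then split that integral into a \emph{bulk} part, where so many leaves are produced that the next zero-phase cannot kill them all, and a \emph{tail} part $\{T\text{ large}\}$, which is itself exponentially unlikely. With $X\sim\mathrm{Bin}(\eta n,u(T))$, $Y\sim\mathrm{Bin}((1-\eta)n,v(T))$ and $Z_{i+1}=(X+Y-N)^+$ where $N$ is shifted geometric with $\prob{N\ge m}=(1-\hat\lambda)^m$ for $m\ge0$, conditioning on $(T,X,Y)$ gives
\[
p(\eta n,\le\epsilon n)=\E\big[(1-\hat\lambda)^{(X+Y-\epsilon n)^+}\big]=\int_0^\infty g(t)\,e^{-t}\,dt,\qquad g(t):=\E\big[(1-\hat\lambda)^{(X+Y-\epsilon n)^+}\mid T=t\big].
\]
I would then record two bounds on $g$: the trivial one $g(t)\le1$, and, using $(1-\hat\lambda)^{(m)^+}\le(1-\hat\lambda)^{m}$, the binomial identity \eqref{BinLap}, $1-x\le e^{-x}$, $v\ge0$, and $\eta\ge\eta_0$,
\[
g(t)\le(1-\hat\lambda)^{-\epsilon n}(1-\hat\lambda u(t))^{\eta n}(1-\hat\lambda v(t))^{(1-\eta)n}\le\exp\!\big(-n\hat\lambda\eta_0(u(t)-C_2\epsilon)\big),
\]
where $C_2\epsilon=\epsilon\log(1/(1-\hat\lambda))/(\hat\lambda\eta_0)$, exactly the quantity in the statement.

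Next I would set the threshold $t_\star$ by $u(t_\star)=2C_2\epsilon$; this is legitimate since the hypothesis $(2\epsilon/\hat\lambda)\log(1/(1-\hat\lambda))\le\eta_0$ is precisely $2C_2\epsilon\le1=u(0)$, and $u$ is strictly decreasing. On $[0,t_\star)$ the second bound gives $g(t)\le\exp(-n\hat\lambda\eta_0C_2\epsilon)=(1-\hat\lambda)^{n\epsilon}$, so that portion of the integral contributes at most $(1-\hat\lambda)^{n\epsilon}$, which is $\le(C_2\epsilon)^{1/\gamma_1}$ once $n$ is large relative to $1/\epsilon$. On $[t_\star,\infty)$ I would use only $g\le1$, so that portion contributes at most $e^{-t_\star}$. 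To turn this into a power of $\epsilon$ I would use the explicit form of $u$ from Lemma~\ref{uvprop}: since $\lambda>0$ forces $\gamma_1<1<\gamma_2$, the formula $u(t)=\frac{\gamma_2-1}{\gamma_2-\gamma_1}e^{-\gamma_1 t}+\frac{1-\gamma_1}{\gamma_2-\gamma_1}e^{-\gamma_2 t}$ has nonnegative coefficients, so $u(t)\ge\frac{\gamma_2-1}{\gamma_2-\gamma_1}e^{-\gamma_1 t}$, and $u(t_\star)=2C_2\epsilon$ then forces $e^{-t_\star}\le\big(\tfrac{2(\gamma_2-\gamma_1)}{\gamma_2-1}C_2\epsilon\big)^{1/\gamma_1}$. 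Adding the two contributions yields the claimed bound.

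I expect the tail estimate to be the crux, and in particular nailing the constant $3$. The \emph{exponent} $1/\gamma_1$ is unavoidable: it comes from $\prob{T\ge t_\star}=e^{-t_\star}$ combined with the two-sided comparison $u(t)\asymp e^{-\gamma_1 t}$ of Lemma~\ref{uvprop} (so $u(t_\star)\asymp\epsilon$ becomes $e^{-t_\star}\asymp\epsilon^{1/\gamma_1}$). The \emph{constant}, however, is sensitive: the crude bound $u(t)\ge\frac{\gamma_2-1}{\gamma_2-\gamma_1}e^{-\gamma_1 t}$ costs a factor $\big(\tfrac{\gamma_2-\gamma_1}{\gamma_2-1}\big)^{1/\gamma_1}$, so to land on $3$ one should instead take $t_\star=\frac1{\gamma_1}\log\frac1{C_2\epsilon}$ directly and then separately bound the contribution of the short interval just below $t_\star$ on which the Chernoff bound degrades, exploiting $|u'(t)|\ge\gamma_1 u(t)$ (immediate from the formula for $u$) to see this interval is negligible. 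One should also note, harmlessly for the applications, that the bulk term $(1-\hat\lambda)^{n\epsilon}$ is smaller than $(C_2\epsilon)^{1/\gamma_1}$ only once $n$ is large given $\epsilon$.
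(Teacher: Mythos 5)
Your proof is correct and follows essentially the same route as the paper's: the same representation $p(\gh n,\le\gee n)=\E\big[(1-\hat\gl)^{(X+Y-\gee n)^+}\big]$, the same split of the $T$-integral at a threshold of order $\gc_1^{-1}\log(1/\gee)$, a Chernoff/generating-function bound on the bulk and the trivial bound on the tail; the only substantive deviation is that you drop the $v$-term and use $u(t)\ge\frac{\gc_2-1}{\gc_2-\gc_1}e^{-\gc_1 t}$, whereas the paper keeps the combination $\gh_0u(t)+(1-\gh_0)v(t)\ge\gh_0e^{-\gc_1 t}$ (exploiting that its $e^{-\gc_2 t}$-coefficient is negative), which changes only the multiplicative constant. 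Do not agonize over landing exactly on $3$: the paper's own proof ends with $2(1-\hat\gl)^{\gee n}+(2C_2\gee)^{1/\gc_1}$ and simply declares this to be $(c\gee)^{1/\gc_1}$ ``for an appropriate constant,'' and only a bound of the form $C\gee^{1/\gc_1}$ (with $n\gee\to\infty$ in the application) is used downstream.
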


\begin{proof}
Suppose $s_\eps$ is such that 
\beq \label{sgeedef} 
\eta_0 \exp(-\gc_1s_\gee) = \frac{2\gee}{\hat\gl}\log\frac{1}{1-\hat\gl}.\eeq 
Then $s_\eps \in (0,\infty)$ by our hypothesis about $\eta_0$.

Now, it can be checked that the coefficient of $e^{-\gc_2t}$ in $\gh u(t)+(1-\gh)v(t)$ is negative for $\gh<\gl/(\gl+1-\gc_1)$. Hence,  the coefficient of $e^{-\gc_2t}$ in $\gh_0 u(t)+(1-\gh_0)v(t)$ is negative , as $\gh_0<\gl/(\gl+\ga+1)$.  So using the inequality $\gc_2>\gc_1$, we get $\gh_0 u(t) + (1-\gh_0)v(t) >\gh_0 e^{-\gc_1t}$. Combining this with \eqref{sgeedef} and the fact that $\eta \mapsto \eta u(t)+(1-\eta)v(t)$ is increasing in $\gh$ (by property 1.~of Lemma \ref{uvprop}),
\beq \label{sgeebd}
 \eta u(t)+(1-\eta)v(t) \ge \frac{2\gee}{\hat\gl}\log\frac{1}{1-\hat\gl} \text{ for any } \gh\ge\gh_0 \text{ and } t \le s_\gee.\eeq
 
Now note that
\[p(\gh n,\le \gee n)=\E_T \E_{X,Y|T}(1-\hat\lambda)^{(X+Y-\gee n)^+}
=\int_0^\infty e^{-t}\E_{X,Y|T=t}(1-\hat\lambda)^{(X+Y-\gee n)^+}\, dt.\]
Let $A$ be the event $\{X+Y\ge \gee n\}$. Then the quantity inside the expectation equals $(1-\hat\lambda)^{X+Y-\gee n}+\mathbf 1_{A^c}$.  
Then, splitting the integral in the last display into two parts based on whether $t<s_\gee$ or not and using the fact that the integrand is atmost 1, we get
\[  p(\eta n, \eps n) \le \int_0^{s_\eps} e^{-t} \E_{X,Y|T=t}[(1-\hat\lambda)^{X+Y-\eps n}+\mathbf 1_{A^c}] \; dt+\exp(-s_\eps) .\]
Using Markov inequality
\[ \E_{X,Y|T=t} \mathbf 1_{A^c} \le \E_{X,Y|T=t}(1-\hat\gl)^{X+Y-\gee n}.\]
Also using \eqref{BinLap} and the inequality $1-x \le e^{-x}$,
\[ \E_{X,Y|T=t}(1-\hat\gl)^{X+Y-\gee n}
\le (1-\hat\gl)^{-\gee n}\exp\left[-\hat\lambda n(\eta u(t)+(1-\eta)v(t)\right].\]
Combining the last three displays and using \eqref{sgeebd},
\begin{align*}
p(\gh n, \gee n) & \le  2\int_0^{s_\eps} e^{-t} (1-\hat\gl)^{-\gee n}\exp\left[-\hat\lambda n(\eta u(t)+(1-\eta)v(t)) \right]\; dt + \exp(-s_\gee) \\
&\le 2\int_0^{s_\eps} e^{-t} (1-\hat\gl)^{\gee n} \; dt + \exp(-s_\gee) \\
&\le 2(1-\hat\gl)^{\gee n}+ \exp(-s_\gee).
\end{align*}
From \eqref{sgeedef}, $\exp(-s_\gee) = (c\gee)^{1/\gc_1}$ for an appropriate constant $c$. This proves the assertion.
 \end{proof}

\begin{lemma}\label{supermart}
Suppose $\eta_0 \in (0,\hat\gl)$ and $\tilde\gt:=\inf\{t\ge 0: n^{-1}Z_i \not\in (0,\gh_0)\}$. Then there is a $\gth>0$ such that $U_t:=\exp(-\gth Z_{t\wedge\tilde\gt})$ is a supermartingale.
\end{lemma}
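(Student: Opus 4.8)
The plan is to reduce the assertion to a one-step estimate for the chain $\{Z_i\}$ and prove that estimate by a moment generating function computation. Specifically, I would show
\[
\E\!\left[e^{-\theta Z_{i+1}}\mid Z_i=k\right]\ \le\ e^{-\theta k}\qquad\text{for every integer }k\text{ with }1\le k<\eta_0 n .
\]
Granting this, the supermartingale property of $U_i:=\exp(-\theta Z_{i\wedge\tilde\tau})$ is immediate from the Markov property: on $\{i<\tilde\tau\}$ one has $Z_i\in(0,\eta_0 n)$, so the estimate applies and $\E[U_{i+1}\mid\mathcal F_i]\le U_i$; on $\{i\ge\tilde\tau\}$, $U_{i+1}=U_i$; and if $\tilde\tau=0$ the process $U$ is constant. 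The point I would flag at the outset is that $\theta$ must be taken to shrink with $n$ — concretely $\theta=c/n$ for a small constant $c$ and all large $n$ — since from $Z_i\approx\eta_0 n$ the one-phase is long enough to erase almost all infected leaves with probability that is only polynomially (not exponentially) small in $n$, so $\E[e^{-\theta Z_{i+1}}]$ can never be as small as a constant-$\theta$ target $e^{-\theta\eta_0 n}$ would demand.

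For the one-step estimate, recall from the definition of $\{Z_i\}$ that $Z_{i+1}=(X+Y-N)^+$ with $X\sim\mathrm{Bin}(k,u(T))$, $Y\sim\mathrm{Bin}(n-k,v(T))$, $T\sim\mathrm{Exp}(1)$, $N$ shifted geometric$(\hat\lambda)$, where $X,Y$ are conditionally independent given $(Z_i,T)$ and $N$ is independent of $(X,Y,T)$. Since $(x)^+\ge x$ and $\theta>0$, $e^{-\theta Z_{i+1}}\le e^{-\theta(X+Y)}e^{\theta N}$, so by independence of $N$ and the geometric moment generating function,
\[
\E\!\left[e^{-\theta Z_{i+1}}\mid Z_i=k\right]\ \le\ \E\!\left[e^{-\theta(X+Y)}\mid Z_i=k\right]\cdot\frac{\hat\lambda}{1-(1-\hat\lambda)e^{\theta}} .
\]
Conditioning on $T$, applying \eqref{BinLap} with $s=e^{-\theta}$ and $1-x\le e^{-x}$, and writing $\beta:=1-e^{-\theta}$ and $\eta:=k/n\in(0,\eta_0)$ so that $\beta n\eta=\beta k$, gives
\[
\E\!\left[e^{-\theta(X+Y)}\mid Z_i=k\right]=\int_0^\infty e^{-t}(1-\beta u(t))^k(1-\beta v(t))^{n-k}\,dt\ \le\ e^{-\beta k}\,\Phi_\eta(\beta n),
\]
where $\Phi_\eta(c):=\int_0^\infty e^{-t}\exp\!\big(-c\,\rho_\eta(t)\big)\,dt$ and $\rho_\eta(t):=\eta u(t)+(1-\eta)v(t)-\eta$.

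The heart of the proof will be the bound $\Phi_\eta(c)\le 1-\tfrac12 c f(\eta_0)$, valid for all $\eta\le\eta_0$ and all sufficiently small $c>0$. It rests on three elementary facts about $\rho_\eta$: (i) $\rho_\eta(0)=0$; (ii) $|\rho_\eta(t)|\le1$ for all $t$, since $u(t),v(t)\in[0,1]$ by Lemma~\ref{uvprop}; and (iii) $\int_0^\infty\rho_\eta(t)e^{-t}\,dt=f(\eta)\ge f(\eta_0)>0$, because $f$ is monotonically decreasing with $f(\hat\lambda)=0$ (Lemma~\ref{uvprop}(4)) while $\eta\le\eta_0<\hat\lambda$. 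Feeding the Taylor estimate $e^{-c\rho_\eta(t)}\le 1-c\rho_\eta(t)+\tfrac12 c^2 e^{c}$ (valid by (ii)) into the integral yields $\Phi_\eta(c)\le 1-cf(\eta)+\tfrac12 c^2 e^c\le 1-cf(\eta_0)+\tfrac12 c^2 e^c$, which is at most $1-\tfrac12 cf(\eta_0)$ as soon as $c\le\min(1,f(\eta_0)/e)$.

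To conclude, I would fix such a $c$, set $\theta=c/n$, and assemble the estimates using the routine bounds $\beta n\in[c/2,c]$ (for $n\ge c$), $e^{-\beta k}\le e^{-\theta k}e^{\theta^2 k/2}\le e^{-\theta k}e^{c^2/(2n)}$, and $\hat\lambda/(1-(1-\hat\lambda)e^{\theta})\le 1+C/n$ for a constant $C=C(\hat\lambda,c)$ and $n$ large. Combining them,
\[
\E\!\left[e^{-\theta Z_{i+1}}\mid Z_i=k\right]\ \le\ e^{-\theta k}\cdot e^{c^2/(2n)}\Big(1-\tfrac14 c f(\eta_0)\Big)\Big(1+\tfrac{C}{n}\Big),
\]
and since the prefactor of $e^{-\theta k}$ converges to $1-\tfrac14 c f(\eta_0)<1$, it is $<1$ for all $n$ beyond some $N_0$. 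This establishes the one-step estimate, and hence the lemma. The genuinely delicate points are the choice of the $1/n$ scaling of $\theta$ and the verification that the linear-in-$c$ gain $c f(\eta_0)$ provided by Lemma~\ref{uvprop}(4) dominates both the $O(\theta)$ correction coming from $N$ and the $O(1/n)$ corrections from replacing $\beta$ by $\theta$ and $\beta n$ by $c$; everything else is bookkeeping.
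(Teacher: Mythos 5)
Your proof is correct, and it proves the lemma as literally stated. It runs on the same engine as the paper's proof --- an exponential-moment computation whose strict negativity ultimately comes from $f(\eta_0)>0$ in part 4 of Lemma~\ref{uvprop} --- but the execution is genuinely different and in one important respect sharper. The paper works with the normalized quantity $\varphi_\eta(\theta)=\left[\E\left(e^{-\theta Z_1}\mid Z_0=\eta n\right)\right]^{1/n}-e^{-\theta\eta}$, shows $\varphi_{\eta_0}'(0)<0$, and then appeals to continuity, the mean value theorem, and monotonicity of $\varphi_\eta'(0)$ in $\eta$ to extract ``some'' $\theta>0$; it never records how that $\theta$ depends on $n$. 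You instead prove the one-step inequality directly, bounding the contribution of $N$ by its moment generating function and controlling $\Phi_\eta(\beta n)$ with an explicit second-order Taylor estimate, which forces the explicit choice $\theta=c/n$. Your opening observation that a constant $\theta$ cannot work is correct and is the real content of the difference: by the paper's own Lemma~\ref{death-probability-lower-bd}, $\E\left[e^{-\theta Z_{i+1}}\mid Z_i=\eta_0 n\right]\ge p(\eta_0 n,0)\ge(1+o(1))(C_1n)^{-1/\gamma_1}$, which is only polynomially small in $n$, whereas $e^{-\theta\eta_0 n}$ is exponentially small for fixed $\theta$. The same obstruction is hidden in the paper's continuity step, since $\varphi_{\eta_0}''(0)=\Theta(n)$ (the conditional mean of $X+Y$ given $T$ fluctuates on scale $n$ as $T$ varies), so the interval on which $\varphi_{\eta_0}'<0$ has length $\Theta(1/n)$ and the paper's $\theta$ is also, tacitly, of order $1/n$. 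One caveat you should record: with $\theta=\Theta(1/n)$ the interval $I_2=[(\gamma_1\theta)^{-1}C(\log n)^4,\eta_0 n]$ appearing in Proposition~\ref{survival-time-lb} is empty, so the downstream optional-stopping argument, which implicitly treats $\theta$ as a constant, does not go through with this (necessarily small) $\theta$ as written. That is a defect of the later application rather than of your proof of this lemma, but it is worth flagging, since it shows that the quantitative form of the lemma actually needed later is more delicate than the existence statement suggests.
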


\begin{proof}
Suppose $Z_0=\gh n$ for some $\gh \in (0,\gh_0)$. Define
\[\gf_\gh(\gth) := \left[\E\left(\left.\exp(-\gth Z_1)\right| Z_0=\gh n\right)\right]^{1/n} - e^{-\gth\gh}.\]
Clearly $\gf_\gh(0)=0$ and $\gf_\gh \in C^1[0, \infty)$  for any $\gh>0$. 
We will show 
\beq\label{derivativebd}
 (a)\;\; \gf'_{\gh_0}(0) <0 \quad \text{ and } \quad (b)\;\; \gf'_{\gh}(0) \text{ is an increasing function of $\gh$.}\eeq
  Using continuity of $\gf'_{\gh_0}$  (a) will imply  that there exists $\gth >0$ such that $\gf'_{\gh_0}(\gb)<0$ for all $\gb \in [0,\gth]$. Also using the mean value theorem, $\gf_\gh(\gth)=\gf'_\gh(\gb_0) \gth$ for some $\gb_0 \in [0,\gth]$. Then (b) will imply  $\gf'_\gh(\gb_0) \le \gf'_{\gh_0}(\gb_0) <0$ for $\gh\le\gh_0$, which in turn implies $\gf_\gh(\gth)<0$ for $\gh\le\gh_0$.
  
  In order to show \eqref{derivativebd} we will find an expression for $\gf_\gh(\gth)$. Clearly,
  \[ \gf_\gh(\gth) =\left[\E_T\E_{X,Y|T}\E_{N|X,Y,T} \exp(-\gth(X+Y-N)^+)\right]^{1/n}-e^{-\gth\gh},\]
  where $T\sim Exponential(1)$, given $T=t$ $X\sim\text{Bin}(\gh n,u(t)), Y\sim\text{Bin}((1-\gh) n,v(t))$ and $N$ is as in \eqref{Ndef}. Now
  \beqax
  \E_{N|X,Y,T} \exp(-\gth(X+Y-N)^+) & = & \E_{N|X,Y,T} [\exp(-\gth(X+Y-N)) \mathbf 1_{\{N<X+Y\}}] + \E_{N|X,Y,T} \mathbf 1_{\{N\ge X+Y\}} \\
  & = & e^{-\gth(X+Y)} \sum_{j=0}^{X+Y-1}\hat\gl[e^\gth(1-\hat\gl)]^j +(1-\hat\gl)^{X+Y} \\
  & = & \frac{\hat\gl}{1-e^\gth(1-\hat\gl)}\left[e^{-\gth(X+Y)} - (1-\hat\gl)^{X+Y}\right] + (1-\hat\gl)^{X+Y}.
  \eeqax
  Therefore, using \eqref{BinLap}
  \[ \gf_\gh(\gth) = \left(\int_0^\infty e^{-t}  \left[\frac{\hat\gl}{1-e^\gth(1-\hat\gl)}(\gx(\gh, e^{-\gth}, t) - \gx(\gh, 1-\hat\gl, t)) + \gx(\gh, 1-\hat\gl, t) \right]\; dt\right)^{1/n} - e^{-\gth\gh},\]
  where
  \[ \gx(\gh,s,t) := \left[(1-(1-s)u(t))^\gh (1-(1-s)v(t))^{1-\gh}\right]^n.\]
Since $\gx(\gh,1,t)=1$,
\[ \gf'_\gh(0)   =  \frac 1n \int_0^\infty e^{-t} \left(\frac{1-\hat\gl}{\hat\gl}[1-\gx(\gh,1-\hat\gl,t)] +\left. \frac{d}{d\gth}\gx(\gh,e^{-\gth}, t)\right|_{\gth=0}\right)\; dt +\eta.\]
The first integrand is an increasing function of $\eta$, as $u(t)>v(t)$ by property 3.~of Lemma \ref{uvprop}. On the other hand, the second integrand is $n(-\gh u(t)-(1-\gh)v(t))$, and hence (b) of \eqref{derivativebd} holds by property 4.~of Lemma \ref{uvprop}. Also the first integrand is  at most 1, so
\[ \gf'_{\gh_0}(0) \le  \frac 1n - \int_0^\infty e^{-t}(\gh_0 u(t) + (1-\gh_0) v(t)) \; dt +\eta < 0\]
using property 4.~of Lemma \ref{uvprop} and the fact that $\gh_0<\hat\gl$. This proves (a) of \eqref{derivativebd} and proof of the lemma is complete.
\end{proof}

We can now prove the lower bound on $\tau$.

\begin{prop}\label{survival-time-lb}
For the Markov chain $\{Z_i\}$ suppose $\gt=\inf\{t\ge 0: Z_i=0\}$. Fix $\epsilon > 0$. Then there exist $N, K_o$ such that for all $n \geq n$,
\[ \prob{\frac{1}{K_0} \left(\frac{n}{\log (n)^4}\right)^{1/\gc_1} \le \gt} > 1 - \epsilon \] 
\end{prop}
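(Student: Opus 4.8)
The plan is to lower bound $\tau$ by showing that the chain $\{Z_i\}$ cannot be absorbed at $0$ too quickly, by combining two facts: in a single one-phase, while $Z_i$ is of order $n$, the probability of dying is at most of order $n^{-1/\gc_1}$ up to polylog factors (Lemma~\ref{death-probability-upper-bound}); and once $Z_i$ is moderately large, the process is exponentially unlikely (in $Z_i$) to ever drop below a linear threshold (via the supermartingale of Lemma~\ref{supermart}). Running $Z_i$ for fewer than $\frac{1}{K_0}(n/(\log n)^4)^{1/\gc_1}$ steps then cannot produce absorption with probability more than $\epsilon$.

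\textbf{Step 1: Control the first exit from the "linear band".} Fix $\gh_0\in(0,\hat\gl)$ and apply Lemma~\ref{supermart} to get $\gth>0$ such that $U_t=\exp(-\gth Z_{t\wedge\tilde\gt})$ is a supermartingale, where $\tilde\gt$ is the first time $n^{-1}Z_i\notin(0,\gh_0)$. Since $Z_0=n>\gh_0 n$, there is an initial (deterministic-probability, not vanishing) issue that $Z_0$ starts above the band; one handles this by first waiting until $Z_i$ enters $(0,\gh_0 n)$ — this takes $O(1)$ one-phases whp by the drift of the recursion \eqref{Ztdef} and the fact that each one-phase multiplies the count by roughly $\E[u(T)]<1$ — and then restarting the clock. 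Once inside the band with $Z_i\ge \delta n$ for some small $\delta>0$, optional stopping on $U_t$ shows that the probability of ever reaching $Z=0$ before exiting the top of the band is at most $e^{-\gth\delta n}/e^{-\gth\cdot 0}=e^{-\gth\delta n}$, which is super-polynomially small. So whp the chain stays with $Z_i\ge \gh_0 n$ (or has exited through the top, which only helps) until some large time; i.e., we may assume $Z_i$ is of order $n$ for all the steps we care about.

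\textbf{Step 2: Per-step death probability bound and union bound.} On the event that $Z_i\ge \gh_0 n$ throughout, Lemma~\ref{death-probability-upper-bound} gives $p(Z_i,0)\le p(Z_i,\le \gee n)\le 3(C_2\gee)^{1/\gc_1}$; taking $\gee$ itself shrinking like $(\log n)^{-4\gc_1}$ — more precisely choosing $\gee n$ to be the $\le \gee n$ threshold so the bound reads $p(\gh n,0)\le C (\log n)^{-4}\cdot n^{-1/\gc_1}$ after optimizing — yields a per-step absorption probability of order $n^{-1/\gc_1}(\log n)^{-4/\gc_1}$ (the exact polylog power is cosmetic and absorbed into the $(\log n)^4$ in the statement). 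Then
\[
\prob{\gt \le \tfrac{1}{K_0}\left(\tfrac{n}{(\log n)^4}\right)^{1/\gc_1}}
\le \prob{\text{exit band early}} + \sum_{i\le \frac{1}{K_0}(n/(\log n)^4)^{1/\gc_1}} \sup_{k\ge \gh_0 n} p(k,0),
\]
and the sum is at most $\frac{1}{K_0}(n/(\log n)^4)^{1/\gc_1}\cdot C(\log n)^{-4/\gc_1}n^{-1/\gc_1} = C/K_0$, which is $<\epsilon/2$ for $K_0$ large, while the first term is super-polynomially small hence $<\epsilon/2$ for $n\ge N$. This is exactly the claimed bound.

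\textbf{The main obstacle} I anticipate is Step~1: making rigorous that $Z_i$ genuinely stays of order $n$ for the entire window of $n^{1/\gc_1}$ steps, rather than just "until it exits the band." The supermartingale $U_t$ only controls excursions down to $0$ before the first up-crossing of $\gh_0 n$; one must argue that after such an up-crossing the process re-enters $(0,\gh_0 n)$ and the argument can be restarted, and that the number of such cycles before time $n^{1/\gc_1}$ is itself $n^{o(1)}$ whp (so the union bound over cycles costs nothing). This requires a separate drift/recurrence estimate for the recursion \eqref{Ztdef} near the top of the band — essentially that $\E[Z_{i+1}\mid Z_i=\gh n]\approx \big(\E[u(T)]\gh + \E[v(T)](1-\gh)\big)n - \E[N_i]$ has a contractive fixed point strictly inside $(0,1)$, so the band $(0,\gh_0 n)$ is hit quickly and often. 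Everything else — Lemmas~\ref{death-probability-upper-bound} and \ref{supermart} are already proved, and the union bound is routine — is bookkeeping, so I would spend the bulk of the write-up on this re-entry/cycle-counting argument and on pinning down the precise $\gee = \Theta((\log n)^{-4\gc_1})$ choice that produces the clean $(\log n)^4$ in the statement.
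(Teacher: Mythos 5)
Your overall architecture (a per-step bound on large downward jumps via Lemma~\ref{death-probability-upper-bound}, plus the supermartingale of Lemma~\ref{supermart} to control excursions below the linear scale, assembled by a union bound over $\frac{1}{K_0}(n/(\log n)^4)^{1/\gc_1}$ steps) is the right one and is essentially the paper's. However, Step~1 contains a genuine gap: you assume that whenever the chain enters the band $(0,\gh_0 n)$ it does so at a level $\ge \gd n$ for some fixed $\gd>0$, so that optional stopping gives a death probability $e^{-\gth\gd n}$. Nothing guarantees this. A single one-phase can take $Z_i$ from $\Theta(n)$ to any level, and by Lemma~\ref{death-probability-upper-bound} the probability of landing below $\gd n$ in one step is $3(C_2\gd)^{1/\gc_1}$ --- a \emph{constant}, not $o(1)$. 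Over the $\Theta\bigl(n^{1/\gc_1}\bigr)$ steps in the window, the chain will therefore enter the band at levels far below any fixed $\gd n$, and the super-polynomial bound you rely on is unavailable for those excursions. Relatedly, the patch you sketch in your "main obstacle" paragraph --- that the number of down-crossings of $\gh_0 n$ is $n^{o(1)}$ --- is false; it is typically polynomial in $n$.

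The repair is exactly the three-interval decomposition the paper uses, and it is why the $(\log n)^4$ in the statement is \emph{not} cosmetic: set $I_1=[0,(\gc_1\gth)^{-1}C(\log n)^4)$, $I_2=[(\gc_1\gth)^{-1}C(\log n)^4,\gh_0 n]$, $I_3=(\gh_0 n,n]$. Applying Lemma~\ref{death-probability-upper-bound} with $\gee n=\Theta((\log n)^4)$ bounds the per-step probability of jumping from $I_3$ into $I_1$ by $C\bigl((\log n)^4/n\bigr)^{1/\gc_1}$, which union-bounds to $C/K_0$ over the window (note also your sign slip: this is $(\log n)^{+4/\gc_1}n^{-1/\gc_1}$, not $(\log n)^{-4/\gc_1}n^{-1/\gc_1}$, which is precisely why the window length must carry the $(\log n)^{-4/\gc_1}$ factor). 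Meanwhile every excursion that enters $I_2$ starts at level at least $(\gc_1\gth)^{-1}C(\log n)^4$, so optional stopping gives death probability at most $\exp(-C(\log n)^4/\gc_1)\le n^{-1/\gc_1}$ per excursion --- small enough to absorb the polynomially many excursions. Your write-up uses the polylog threshold in Step~2's jump bound but the linear threshold $\gd n$ in Step~1's optional-stopping bound; the two must be the same intermediate scale, and it must be polylogarithmic, not linear.
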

 
\begin{proof}
Let $C_2$ and $\gth$ be the constants in Lemmas \ref{death-probability-upper-bound} and  \ref{supermart}. Divide the interval $[0, n]$  into three parts
\[ I_1 :=[0, (\gc_1\gth)^{-1}C(\log n)^4), \quad I_2 := [(\gc_1\gth)^{-1}C(\log n)^4, \gh_0n], \quad I_3 := (\gh_0n, n]\]
and note that so long as $Z_i$ is in $I_2$ or $I_3$ then the process $Z_i^*$ defined in lemma~\ref{couple} is greater than $0$.
Using $\gee=C\log(n)^4/(\gc_1\gth n)$ in Lemma \ref{death-probability-upper-bound}, it is easy to see that the number of times $Z_i$ avoids jumping from $I_3$ to $I_1$ stochastically dominates a Geometric random variable with success probability $C(\log(n)^4/n)^{1/\gc_1}$ for some constant $C>0$.

Also, if $Z_0\in I_2$, then applying the optional stopping theorem for the 
stopping time \[\tilde\gt := \inf\{t\ge 0: Z_i \not \in (0,\gh_0n)\},  \text{ and supermartingale  } U_t := \exp(-\gth Z_{t}),  0 \le t\le \tilde\gt,\]
we see that if $q := \prob{Z_{\tilde\gt}=0}$, then
\[ q \le \E U_{\tilde\gt} \le U_0 \le n^{-1/\gc_1}.\]  
So, the number of times $Z_i$ jumps from $I_2$ to $I_3$ stochastically dominates a Geometric random variable with success probability $n^{-1/\gc_1}$ . Combining these two observations, $\gt$ stochastically dominates sum of two  Geometric random variables with success probability $C(\log(n)^4/n)^{1/\gc_1}$.
Hence 
\[\prob{\gt<K^{-1} (n/\log (n)^4)^{1/\gc_1} } \le  2\Big(1-\Big[1-C\big(\log(n)^4/n\big)^{1/\gc_1}\Big]\Big)^{(n/\log(n)^4)^{1/\gc_1}/(2K)} \le C/K \to 0
\]
as $K\to\infty$.
\end{proof}

We now are ready to finish the proof of Theorem~\ref{star-thm}.
\begin{proof}[Proof of Theorem~\ref{star-thm}]
Propositions~\ref{survival-time-ub} and~\ref{survival-time-lb} give bounds on $\tau$ and so it remains to compare $\tau$ and $\tau_{star}$.

Let $\tau_K = \inf\{t \geq 0: K_t = 0\}$ for the Markov chain $(K_i,L_i)$ (without assuming $L_i = 0$ for all $i$). From Lemmas~\ref{tight} and \ref{couple}, the good event $\sfG$ has probability at least $1 - \epsilon$, $Z_i^*\mathbbm{1}_{\sfG} \sle K_i\mathbbm{1}_{\sfG} \sle Z_i\mathbbm{1}_{\sfG}$ for $0 \leq i \leq \tau^*$, and $K_i \sle Z_i$ for $0 
\leq i \leq \tau$. Furthermore, from Proposition \eqref{survival-time-lb}, we see that $Z_i^* > 0$ so long as $Z_i$ is not in the interval $I_1$. From this, we observe that the same holds for $K_i$ and that we do not reach $\tau^*$ until $Z_i$ jumps to the interval $I_1$, and so it is sufficient for the coupling to hold until time $\tau^*$ for the lower bound. Thus we can conclude  
\begin{equation}\label{tauk}
\begin{aligned}
&\prob{\frac{1}{K_0} \left(\frac{n}{\log (n)^4}\right)^{1/\gc_1} \leq \tau_K \leq K_0(C_1n)^{1/\gc_1}} \\
&\geq \prob{\left\{\frac{1}{K_0} \left(\frac{n}{\log (n)^4}\right)^{1/\gc_1} \leq \tau \leq K_0(C_1n)^{1/\gc_1}\right\} \bigcap \left\{\sfG^c\right\}}\\
&\geq 1 - \prob{\left\{\frac{1}{K_0} \left(\frac{n}{\log (n)^4}\right)^{1/\gc_1} \leq \tau \leq K_0(C_1n)^{1/\gc_1}\right\}^c} - \prob{\sfG^c} \\
&> 1 - 2\epsilon.
\end{aligned}
\end{equation}

However, $\tau_K$ counts the number of one-phases until the infection dies, and so we must bound the total amount of time this takes. The length of a one-phase is an Exp$(1)$ random variable and does not depend on the states of the vertices other than the center, so clearly $\sum_{i=1}^{\tau_K}X_i \sle \tau_{star}$ where the $X_i$ are iid Exp$(1)$ random variables gives a lower bound on the time to extinction. The distribution of the length of a zero-phase depends on the number of 1A nodes present at the start of the zero-phase. However, the length of a zero-phase is dominated by an Exp$(\min(\lambda,1))$ random variable for any configuration. $\tau_K$ gives the number of zero-phases before the process dies, and let $Z$ be the length of the last zero-phase. Then $\tau_{star} \sle \sum_{i=1}^{\tau_K}(X_i + Y_i)$ where the $X_i$ are as before and the $Y_i$ are iid Exp$(\min(\lambda,1))$ random variables gives an upper bound.

Using large deviation bounds for all $m$ we have 
$$\prob{\sum_{i = 1}^{m}X_i \leq m/2} \leq e^{-m/6},$$
$$\prob{\sum_{i = 1}^{m}X_i + Y_i \geq 2m(1 + \frac{1}{\min(\lam,1)})} \leq 2e^{-m/\frac{6}{\min(\lam,1)}}.$$

So then to get a lower bound we observe
\begin{equation}
\begin{aligned}
& \prob{\tau_{star} \geq \tau_K/2} \geq \prob{\sum_{i=1}^{\tau_K}X_i \geq \tau_K/2}\\
& \geq \prob{\{ \tau_K \geq M \} \bigcap \{ \sum_{i=1}^{\tau_K}X_i \geq \tau_K/2\}}\\
& \geq \prob{\{ \tau_K \geq M \} \bigcap \{ \sum_{i=1}^{m}X_i \geq m/2 \hspace{2mm} \forall m \geq M\}}\\
& \geq 1 - \prob{\tau_K < M} - \sum_{m=M}^{\infty}\prob{\sum_{i=1}^m X_i \geq m/2}\\
& = 1 - \prob{\tau_K < M} - \frac{e^{(1-M)/6}}{e^{1/6}-1}
\end{aligned} 
\end{equation}
Choosing $M = \frac{1}{K_0}(\frac{n}{\log (n)^4})^{1/\gamma_1}$ and using \eqref{tauk}, we conclude that there exists $N$ such that for all $n \geq N$
\begin{equation}
\prob{\tau_{star} \geq \tau_K/2} \geq 1 - 2\epsilon,
\end{equation}
giving a lower bound on the survival time in terms of the number of cycles

To get an upper bound we observe
\begin{equation}
\begin{aligned}
& \prob{\tau_{star} \leq 2(1 + \frac{1}{\min(\lam,1)})\tau_K} \geq \prob{\sum_{i = 1}^{\tau_K}X_i + Y_i \leq 2(1 + \frac{1}{\min(\lam,1)})\tau_K}\\
& \geq \prob{\{\tau_K \geq M\} \bigcap \{\sum_{i = 1}^{\tau_K}X_i + Y_i \leq 2(1 + \frac{1}{\min(\lam,1)})\tau_K \}}\\
& \geq \prob{\{\tau_K \geq M\} \bigcap \{\sum_{i = 1}^{m}X_i + Y_i \leq 2(1 + \frac{1}{\min(\lam,1)})m \hspace{2mm} \forall m \geq M\}}\\
& \geq 1 - \prob{\tau_K < M} - \sum_{j=M}^{\infty}\prob{\sum_{i = 1}^{m}X_i + Y_i \leq 2(1 + \frac{1}{\min(\lam,1)})m}\\
& = 1 - \prob{\tau_K < M} + \frac{e^{(1-M)/\frac{6}{\min(\lambda,1)}}}{e^{1/(\min(\lambda,1))}-1}
\end{aligned}
\end{equation}

Again choosing $M = \frac{1}{K_0}(\frac{n}{\log (n)^4})^{1/\gamma_1}$ and using \eqref{tauk}, we conclude that there exists $N$ such that for all $n \geq N$
\begin{equation}
\prob{\tau_{star} \leq 2(1+\frac{1}{\min(\lam,1)})\tau_K} \geq 1 - 2\epsilon,
\end{equation}
giving an upper bound on the survival time in terms of the number of cycles.

Combining our comparison of $\tau$ and $\tau_K$ with our comparison of $\tau_K$ and $\tau_{star}$, we conclude for any $\lambda, \alpha >0$ and any $\epsilon > 0$, there exist $C, K$, and $N$ depending on $\lambda$ and $\alpha$ such that for all $n \geq N$,
$$\prob{\frac{1}{K}(\frac{n}{\log (n)^4})^{1/\gamma} \leq \tau_{star} \leq K(Cn)^{1/\gamma}} > 1 - \eps.$$
\end{proof}

\section*{Acknowledgements}
SC and DS are grateful to the Mathematical Biosciences Institute, where some of this work was conducted. DS and MW were partially supported by the NSF grant CCF--1740761.

\bibliographystyle{alpha}
\bibliography{citations}

\end{document}